\DeclareMathOperator{\Span}{span}
\DeclareMathOperator{\ad}{ad}
\DeclareMathOperator{\lcm}{lcm}
\DeclareMathOperator{\Mod}{mod}
\DeclareMathOperator{\Spec}{Spec}
\DeclareMathOperator{\Int}{Int}
\begin{document}


\chapter{Introduction}
\label{sec:Introduction}

\section{Outline}
\label{sec:Outline}

The purpose of this thesis is to explore various realizations of an algebra which took on the name of Nobel Prize winning Norwegian-American physical chemist and theoretical physicist, Lars Onsager (1903-1976). Four realizations of the Onsager algebra will be studied, following the order in which they were discovered. 

We begin, in this chapter, by chronologically going over the key events pertinent to this thesis including the birth of the Onsager algebra and its original definition, \cite{OnsagerL44}. In Chapter 2, we examine how the Onsager algebra can be presented as a Lie algebra with two generators and two relations. This is done with the help of the works \cite{DolanLGradyM82}, \cite{PerkJ89}, \cite{DaviesB91} and \cite{RoanS91}. 

In Chapter 3, we consider the realization of the Onsager algebra as an equivariant map algebra (a Lie algebra of regular maps equivariant under a group action). Using this correspondence, one can then describe the closed ideals of the Onsager algebra. This is done in reference to the works \cite{RoanSDateE00} and \cite{DateERoanS00}.

Finally, in Chapter 4, we examine the tetrahedron algebra and its realization as a map algebra and show that the Onsager algebra is isomorphic to a subalgebra of the tetrahedron algebra. Using this realization, we can then explicitly describe all ideals of the Onsager algebra. This is done with the help of the works \cite{HartwigBTerwilligerP07} and \cite{ElduqueA07}.

The literature presents these results while omitting many steps and details. Therefore, the goal of this thesis is to fill in these gaps in order to have complete and concise arguments to support them.

\section{Background}
\label{sec:Background}

Throughout this thesis, unless otherwise specified, it is assumed that $k$ is an arbitrary field of characteristic zero, $t$ is an indeterminate and all tensor products are over $k$. Furthermore, we will denote the set of natural numbers, $\{0,1,2,\dots\}$, by $\mathbb{N}$ and the set of positive natural numbers, $\{1,2,\dots\}$, by $\mathbb{N}_+$.

In this section, we give a quick review of some basic definitions that will be used throughout the thesis. More details can be found in \cite{HumphreysJ72} and \cite{ErdmannKWildonM06}.

\begin{defn}[Lie Algebra] A \emph{Lie algebra} over $k$ is a vector space $L$, together with a bilinear map, the \emph{Lie bracket},
\[ L \times L \rightarrow L, \quad (x,y) \mapsto [x,y], \]
satisfying the following properties:
\begin{align}
    [x,x] &= 0, \quad \text{for all } x \in L, \\
    [x,[y,z]] + [y,[z,x]] + [z,[x,y]] &= 0, \quad \text{for all } x,y,z \in L. \tag{J.I.} \label{JacobiId}
\end{align}
The Lie bracket $[x,y]$ is often referred to as the \emph{commutator} of $x$ and $y$. Condition \eqref{JacobiId} is known as the \emph{Jacobi identity} .
\end{defn}

Let $\mathfrak{gl}(L)$ denote the set of endomorphisms of the vector space $L$. This is also a vector space over $k$ and becomes a Lie algebra, known as the \emph{general linear algebra}, if we define the Lie bracket by \[ [x,y] = x \circ y - y \circ x, \qquad \text{for } x,y \in \mathfrak{gl}(L),\] where $\circ$ denotes the composition of maps. 

\begin{defn}[Lie Algebra Homomorphism] Let $L_1$ and $L_2$ be Lie algebras over $k$. The map $\varphi : L_1 \rightarrow L_2$ is said to be a \emph{Lie algebra homomorphism} if $\varphi$ is a linear map and satisfies $\varphi ([x,y]) = [ \varphi(x),\varphi(y) ]$ for all $x$, $y \in L_1$. If $\varphi$ is also bijective, it is said to be an \emph{isomorphism}.
\end{defn}

An important homomorphism is the \emph{adjoint homomorphism}. If $L$ is a Lie algebra, it is defined as the map
\begin{align*}
	\ad : L \rightarrow \mathfrak{gl}(L)
\end{align*}
such that $(\ad_x)(y) := [x,y]$ for $x$, $y \in L$.

\begin{defn}[Lie Subalgebra] Given a Lie algebra $L$, a \emph{Lie subalgebra} of $L$ is a vector subspace $K \subseteq L$ such that $[x,y] \in K$ for all $x, y \in K$.
\end{defn}

\begin{rmk} Lie subalgebras are easily seen to be Lie algebras in their own right. 
\end{rmk} 

\begin{defn}[Ideal] An \emph{ideal} of a Lie algebra $L$ is a subspace $I$ of $L$ such that $[x,y] \in I$ for all $x \in L$ and $y \in I$.
\end{defn}

An important example of an ideal of a Lie algebra $L$ is the \emph{center} of $L$, defined by \[ \mathcal{Z}(L) \stackrel{\text{def}}{=} \{x \in L | [x,y] = 0 \text{ for all } y \in L\}. \]

\begin{defn}[Closed Ideal] A (non-trivial) ideal $I$ of a Lie algebra $L$ is called \emph{closed} (in the sense of Date and Roan) if $Z(I) \stackrel{\text{def}}{=} \{x \in L | [x,L] \subseteq I\} = I$ or equivalently $L/I$ has a trivial center.
\end{defn}

\begin{rmk} Note that $I$ is always included in $Z(I)$. \end{rmk}

\begin{defn}[Derived Series / Solvable] The \emph{derived series} of a Lie algebra $L$ is the series of ideals
\begin{align*}
	L^{(1)} = [L,L] \quad \text{and} \quad L^{(k)} = [L^{(k-1)},L^{(k-1)}] \quad \text{for } k \geq 2.
\end{align*}
The Lie algebra $L$ is said to be \emph{solvable} if for some $m \geq 1$ we have $L^{(m)} = 0$.
\end{defn}

\begin{defn}[Lower Central Series / Nilpotent] The \emph{lower central series} of a Lie algebra $L$ is the series of ideals 
\begin{align*}
	L^{1} = [L,L] \quad \text{and} \quad L^{k} = [L,L^{k-1}] \quad \text{for } k \geq 2.
\end{align*}
The Lie algebra $L$ is said to be \emph{nilpotent} if for some $m \geq 1$ we have $L^{m} = 0$.
\end{defn}

\section{Timeline}
\label{sec:Timeline}

The Onsager algebra appeared for the first time in Onsager's solution of the two-dimensional Ising model in a zero magnetic field \cite{OnsagerL44}, which is a landmark in the theory of exactly solved models in statistical mechanics. In \cite{OnsagerL44}, Onsager determined that the so-called transfer matrix associated to this model was proportional to $\exp(A_0) \exp(A_1)$, where $A_0$ and $A_1$ are non-commuting matrices. By analyzing the structure of the algebra generated by $A_0$ and $A_1$ in detail, Onsager derived a Lie algebra (denoted $\mathcal{O}$) with basis $\{A_m, G_l ~ | ~ m \in \mathbb{Z}, l \in \mathbb{N}_+\}$ and antisymmetric product given by:
\begin{align}
	[A_l,A_m] &= 2G_{l-m}, &l > m,\label{Onsrel1} \\
  [G_l,A_m] &= A_{m+l} - A_{m-l},  \label{Onsrel2} \\
  [G_l,G_m] &= 0 \label{Onsrel3}.
\end{align}
Note that the fact that this is a Lie algebra is not obvious. In Lemma \ref{OisLie}, we show that this is indeed the case.

\begin{rmk} In the above definition, the notation is slightly nonstandard. The elements $A_m$, $G_l$ in our definition correspond to the elements $\frac{A_m}{2}$, $\frac{G_l}{2}$ in most references, including \cite{OnsagerL44}. This adjustment is made for notational purposes. 
\end{rmk}

Onsager's work on the two-dimensional Ising model was soon after simplified by using Clifford algebras \cite{KaufmanBOnsagerL49} and fermion techniques \cite{SchultzTMattisDLiebE64}. These new methods of solving the two-dimensional Ising model did not involve the Onsager algebra. This caused the Onsager algebra to lose prominence in the literature until the early 1980s, at which point it began to re-emerge in various papers but in a not so apparent manner. Namely, in Dolan and Grady's 1982 paper \cite{DolanLGradyM82} on the role of self-duality in producing integrability, the Onsager algebra appeared implicitly. The essence of the main result of \cite{DolanLGradyM82} was that when we have a pair of dual operators $A$, $B$ satisfying what we will refer to as the \emph{Dolan-Grady relation} \[ [A,[A,[A,B]]] = 4[A,B],\] then there exists an infinite set of conserved commuting self-dual charges. Perk made the connection between the Dolan-Grady algebra (generated by $A$ and $B$) and the Onsager algebra when he considered the chiral Potts model in \cite{PerkJ89}, which contains the two-dimensional Ising model as a special case. He observed that Dolan and Grady's construction was identical with that of Onsager, but in a different notation. Davies then, after receiving some inspiration from the work of von Gehlen and Rittenberg \cite{GehlenGRittenbergV85}, aimed to prove that the Onsager algebra and the related Dolan-Grady relation did not need self-duality. He managed to do so in \cite{DaviesB91} by taking the one Dolan-Grady relation \[[A,[A,[A,B]]] = 4[A,B]\]
and replacing it by the two relations \[[A,[A,[A,B]]] = 4[A,B], \quad [B,[B,[B,A]]] = 4[B,A]. \]
Doing this eliminated the need for the self-duality condition. He then gave a mathematical treatment of the Onsager algebra using only these two relations. In the same year, Roan took it upon himself to attempt to add rigour to Davies' arguments from the mathematical point of view. In the first section of his paper \cite{RoanS91}, Roan followed Davies' arguments to derive the important relations between generators of the Onsager algebra from the Dolan-Grady relations. In Chapter 2 of this thesis, we will follow their procedure and add some additional arguments in order to demonstrate that the Onsager algebra and the Dolan-Grady algebra are in fact isomorphic. Following this result, the literature uses the two realizations interchangeably as the definition of the Onsager algebra. 

We then fast forward to Date and Roan's work \cite{RoanSDateE00, DateERoanS00} in which they studied the Onsager algebra from the ideal theoretic point of view. They managed to obtain a complete classification of closed ideals and the structure of quotient algebras. They did so by first showing that the Onsager algebra can be seen as a subalgebra of an $\mathfrak{sl}_2$-loop algebra fixed by the Chevalley involution. In Chapter 3 of this thesis, we will imitate their procedure to show that the Onsager algebra can be described as an equivariant map algebra, a realization which is then used to classify its closed ideals. Another motivation for looking at the Onsager algebra as an equivariant map algebra is examined in \cite{NeherESavageASenesiP09}. In this paper, Neher, Savage and Senesi provided a complete classification of the irreducible finite-dimensional representations of an arbitrary equivariant map algebra. They applied their results to the Onsager algebra to recover a classification of its irreducible finite-dimensional representations. Further motivation for looking at the Onsager algebra as an equivariant map algebra is that one can attempt to describe its indecomposable representations in hopes of potentially generalizing the result to an arbitrary equivariant map algebra. 

In 2007, Hartwig and Terwilliger continued to investigate the relationship between the Onsager algebra and $\mathfrak{sl}_2$-loop algebras in their joint work \cite{HartwigBTerwilligerP07}, except they worked with a three-point $\mathfrak{sl}_2$-loop algebra. Their main result showed that this three-point loop algebra is isomorphic to the tetrahedron algebra, $\boxtimes$, hence giving it a presentation via generators and relations. Their second main result was that $\boxtimes$ can be decomposed into a direct sum of three subalgebras (not ideals), each of which turn out to be isomorphic to the Onsager algebra. In the same year, Elduque began his paper \cite{ElduqueA07} by reviewing their work and presenting it in a simplified way. The rest of Elduque's paper consisted of solving some of the problems suggested at the end of \cite{HartwigBTerwilligerP07}, one of which was to describe the ideals of the tetrahedron algebra. Elduque managed to describe these explicitly along with the ideals of the Onsager algebra. In the fourth chapter of this thesis, we will add some arguments to the works of Hartwig, Terwiliger and Elduque in order to present a complete proof of the fact that the Onsager algebra is isomorphic to a subalgebra of the tetrahedron algebra. Using this realization of the Onsager algebra, we then proceed to explicitly describing all its ideals. 
\cleardoublepage

\chapter{Dolan-Grady Relations}
\label{sec:DolanAndGradyRelations}

\section{The Onsager Algebra}
\label{sec:TheOnsagerAlgebra}

We begin this chapter by recalling the original definition of the Onsager algebra.

\begin{defn}[The Onsager algebra] \label{OnsDefOrig} The Onsager algebra, denoted $\mathcal{O}$, is the (nonassociative) algebra over $k$ with a basis $\{A_m, G_l ~ | ~ m \in \mathbb{Z}, l \in \mathbb{N}_+\}$ and antisymmetric product given by:
\begin{align*}
	[A_l,A_m] &= 2G_{l-m}, &l > m, \tag{\ref{Onsrel1}} \\
  [G_l,A_m] &= A_{m+l} - A_{m-l},  \tag{\ref{Onsrel2}} \\
  [G_l,G_m] &= 0. \tag{\ref{Onsrel3}}
\end{align*}
\end{defn}

\begin{rmk} \label{rmkG-m} Note that it follows from (\ref{Onsrel1}) that $G_{-m} = -G_m$ ($m \in \mathbb{Z}$).\end{rmk}

\begin{lem} \label{OisLie} The Onsager algebra is a Lie algebra.
\end{lem}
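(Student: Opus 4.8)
The plan is to verify that the bracket defined by (\ref{Onsrel1})--(\ref{Onsrel3}) is antisymmetric (immediate from the setup and Remark \ref{rmkG-m}) and satisfies the Jacobi identity (\ref{JacobiId}) on all triples of basis elements; by trilinearity this suffices. Since the bracket of two $G$'s vanishes, the only triples that can give a nonzero contribution involve at least one $A$. This leaves three essentially distinct cases to check: $(A_i, A_j, A_k)$, $(G_l, A_i, A_j)$, and $(G_l, G_m, A_i)$. First I would handle the easiest case, $(G_l, G_m, A_i)$: expanding $[G_l,[G_m,A_i]] + [G_m,[A_i,G_l]] + [A_i,[G_l,G_m]]$ using (\ref{Onsrel2}) and (\ref{Onsrel3}), each term is a telescoping combination of $A$'s that cancels in pairs, and the last term is zero outright.

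Next I would treat the case $(G_l, A_i, A_j)$. Expanding the cyclic sum, the term $[A_i,[A_j,G_l]]$ and $[A_j,[G_l,A_i]]$ produce, via (\ref{Onsrel2}) and then (\ref{Onsrel1}), combinations of $G$'s indexed by sums and differences of $i,j,l$; the term $[G_l,[A_i,A_j]] = [G_l, 2G_{i-j}] = 0$ by (\ref{Onsrel3}). The point is that the surviving $G$-terms cancel after using the antisymmetry $G_{-n} = -G_n$ from Remark \ref{rmkG-m}. Some care is needed here because (\ref{Onsrel1}) is only stated for $l > m$, so when I write $[A_p, A_q] = 2G_{p-q}$ for arbitrary $p,q$ I am implicitly using the antisymmetry of the bracket together with Remark \ref{rmkG-m}; I would record once at the start that $[A_p,A_q] = 2G_{p-q}$ holds for all $p, q \in \mathbb{Z}$ (including $p = q$, giving $0$), which streamlines all subsequent computations.

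Finally, the case $(A_i, A_j, A_k)$: the cyclic sum is $[A_i,[A_j,A_k]] + [A_j,[A_k,A_i]] + [A_k,[A_i,A_j]]$, which by the reformulated (\ref{Onsrel1}) equals $2\big([A_i, G_{j-k}] + [A_j, G_{k-i}] + [A_k, G_{i-j}]\big)$, and applying (\ref{Onsrel2}) (with the appropriate sign, since $[A_m, G_l] = -[G_l,A_m] = A_{m-l} - A_{m+l}$) turns this into a sum of six $A$-terms with indices $i\pm(j-k)$, $j\pm(k-i)$, $k\pm(i-j)$; these match up and cancel in three pairs. I expect this last case to be the main obstacle, in the sense that it is the most bookkeeping-intensive: one must track six terms and their signs carefully and identify the cancelling pairs (e.g.\ $A_{i+j-k}$ appears with opposite signs from the $A_i$ and $A_j$ brackets). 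None of it is deep, but it is the step most prone to sign errors, so I would lay out the six terms explicitly in a displayed \texttt{align*} and pair them off. Once all three cases are dispatched, the Jacobi identity holds on basis triples, hence everywhere, and $\mathcal{O}$ is a Lie algebra.
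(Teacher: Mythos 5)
Your proposal is correct and follows essentially the same route as the paper: both verify the Jacobi identity case by case on basis triples, using the extension $[A_p,A_q]=2G_{p-q}$ for all $p,q$ (via antisymmetry and $G_{-n}=-G_n$) and the cancellation of telescoping $A$- and $G$-terms. The explicit six-term cancellation you anticipate in the $(A_i,A_j,A_k)$ case is exactly what the paper carries out.
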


\begin{proof} All there is to verify here is that our product indeed satisfies the Jacobi identity \eqref{JacobiId}. We have four cases to consider, the first being trivial: \[[G_l,[G_m,G_n]] + [G_m,[G_n,G_l]] + [G_n,[G_l,G_m]] = 0 + 0 + 0 = 0,\] for any $l,m,n \in \mathbb{N}_+$. Next we consider the three remaining cases.

\begin{align*}
&[A_l,[A_m,A_n]] + [A_m,[A_n,A_l]] + [A_n,[A_l,A_m]] \\
&= 2[A_l,G_{m-n}] + 2[A_m,G_{n-l}] + 2[A_n,G_{l-m}] \tag*{by (\ref{Onsrel1})}\\
&= 2A_{l-m+n} - 2A_{l+m-n} + 2A_{m-n+l} - 2A_{m+n-l} + 2A_{n-l+m} - 2A_{n+l-m} \tag*{by (\ref{Onsrel2})} \\
&= 0,
\end{align*}
for any $l,m,n \in \mathbb{Z}$. Also,

\begin{align*}
&[A_l,[A_m,G_n]] + [A_m,[G_n,A_l]] + [G_n,[A_l,A_m]] \\
&= 2[A_l,A_{m-n}-A_{m+n}] + 2[A_m,A_{n+l}-A_{l-n}] + 2[G_n,G_{l-m}] \tag*{by (\ref{Onsrel1}), (\ref{Onsrel2})}\\
&= 2G_{l-m+n} - 2G_{l-m-n} + 2G_{m-n-l} - 2G_{m-l+n} \tag*{by (\ref{Onsrel1}), (\ref{Onsrel3})} \\
&= 2G_{l-m+n} - 2G_{l-m-n} + 2G_{l-m-n} - 2G_{l-m+n} \tag*{by Remark \ref{rmkG-m}} \\
&= 0,
\end{align*}
for any $l,m \in \mathbb{Z}$ and $n \in \mathbb{N}_+$. Finally,

\begin{align*}
\quad &[A_l,[G_m,G_n]] + [G_m,[G_n,A_l]] + [G_n,[A_l,G_m]]\\
={}&0 + [G_m,A_{n+l}-A_{l-n}] + [G_n,A_{l-m}-A_{m+l}] \tag*{by (\ref{Onsrel1}), (\ref{Onsrel3})}\\
={}&A_{m+n+l} - A_{n+l-m} + A_{l-n-m} - A_{m+l-n} \\
	&+ A_{n+l-m} - A_{l-n-m} + A_{m+l-n} - A_{m+n+l} \tag*{by (\ref{Onsrel2})} \\
={}&0, 
\end{align*}
for any $l \in \mathbb{Z}$ and $m,n \in \mathbb{N}_+$.
\end{proof}

\section{The Dolan-Grady Algebra}
\label{sec:DolanAndGradySAlgebra}

As discussed in the introduction, a presentation of the Onsager algebra as a Lie algebra generated by two generators and two relations came to be implicitly in the work of Dolan and Grady. In this chapter, we will consider their algebra defined as follows.

\begin{defn}[The Dolan-Grady algebra] \label{DGalgDef} Let $\mathcal{OA}$ be the Lie algebra over $k$ with generators $A$, $B$ and relations
\begin{align}
    [A,[A,[A,B]]] = 4[A,B], \label{DG1} \\
    [B,[B,[B,A]]] = 4[B,A]. \label{DG2}
\end{align}
Relations \eqref{DG1} and \eqref{DG2} will be referred to as the \emph{Dolan-Grady relations}.
\end{defn}

The goal is to prove the following proposition.

\begin{prop} \label{IsomDG} The Onsager algebra, $\mathcal{O}$, is isomorphic to the Dolan-Grady algebra, $\mathcal{OA}$. 
\end{prop}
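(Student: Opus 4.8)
The plan is to construct mutually inverse Lie algebra homomorphisms between $\mathcal{OA}$ and $\mathcal{O}$, using the universal property of the presentation of $\mathcal{OA}$ by generators and relations. First I would define a map $\varphi : \mathcal{OA} \to \mathcal{O}$ by sending $A \mapsto A_0$ and $B \mapsto A_1$. To see this is well defined, it suffices to check that the images satisfy the Dolan-Grady relations \eqref{DG1} and \eqref{DG2} inside $\mathcal{O}$. Using \eqref{Onsrel1} and \eqref{Onsrel2}, one computes $[A_0,A_1] = -2G_1$, then $[A_0,[A_0,A_1]] = -2[A_0,G_1] = -2(A_{-1}-A_1) = 2A_1 - 2A_{-1}$, and finally $[A_0,[A_0,[A_0,A_1]]] = 2[A_0,A_1]-2[A_0,A_{-1}]$; since $[A_0,A_{-1}] = -[A_{-1},A_0] = -2G_1 = [A_0,A_1]$, this gives $4[A_0,A_1]$, verifying \eqref{DG1}, and \eqref{DG2} follows by the same computation with the roles of $A_0$ and $A_1$ interchanged (the product is antisymmetric and the relations among the $A_m$, $G_l$ are invariant under the shift symmetry, which I would note explicitly).

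Next I would construct a candidate inverse $\psi : \mathcal{O} \to \mathcal{OA}$. The natural guess, following Davies and Roan, is to set $\psi(A_0) = A$, $\psi(A_1) = B$, and then to define $\psi(A_m)$ and $\psi(G_l)$ for all $m \in \mathbb{Z}$, $l \in \mathbb{N}_+$ by the recursive formulas that the relations \eqref{Onsrel1}--\eqref{Onsrel3} force: e.g. $\psi(G_1) = \tfrac12[B,A]$, $\psi(A_{-1}) = [\psi(G_1),A] + \psi(A_1)$, and more generally $\psi(A_{m+1}) = [\psi(G_1),\psi(A_m)] + \psi(A_{m-1})$, $\psi(G_l) = \tfrac12[\psi(A_l),\psi(A_0)]$. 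Since $\mathcal{O}$ has the $\{A_m,G_l\}$ as a genuine basis, this prescription unambiguously defines a linear map on all of $\mathcal{O}$; the content is to show it respects the bracket, i.e. that the defining relations of $\mathcal{O}$ hold for the images in $\mathcal{OA}$. Alternatively — and this is the cleaner route — I would instead argue in the opposite direction: show that $\mathcal{OA}$ is spanned by elements $\tilde A_m$, $\tilde G_l$ (defined by the analogous recursions inside $\mathcal{OA}$, starting from $\tilde A_0 = A$, $\tilde A_1 = B$) satisfying relations \eqref{Onsrel1}--\eqref{Onsrel3}, so that there is a surjection $\mathcal{O} \twoheadrightarrow \mathcal{OA}$, and then observe that $\varphi \circ (\text{this surjection})$ is the identity on the basis of $\mathcal{O}$, forcing both maps to be isomorphisms.

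The main obstacle is precisely the verification, inside $\mathcal{OA}$, that the recursively defined elements $\tilde A_m$, $\tilde G_l$ satisfy all three Onsager relations \eqref{Onsrel1}, \eqref{Onsrel2}, \eqref{Onsrel3} — in particular that $[\tilde G_l,\tilde G_m]=0$ and that $[\tilde A_l,\tilde A_m]$ depends only on $l-m$. This is the hard, computational heart of the argument: it requires establishing, by induction on the indices, a web of intermediate commutation identities among the $\tilde A_m$ and $\tilde G_l$ (for instance $[\tilde G_1,\tilde G_l]=0$ first, then bootstrapping), exactly the relations that Davies states and Roan partially justifies, and which this thesis proposes to carry out in full detail in Chapter 2. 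Once that span-and-relations claim is in hand, the isomorphism follows formally: $\varphi$ is surjective because $\mathcal{O}$ is generated by $A_0,A_1$ together with the relations, and injective because the surjection $\mathcal{O}\twoheadrightarrow\mathcal{OA}$ composed with $\varphi$ fixes a basis of $\mathcal{O}$; hence $\varphi$ is the desired isomorphism.
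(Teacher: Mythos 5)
Your proposal is correct and follows essentially the same route as the paper: define the homomorphism on generators by $A \mapsto A_0$, $B \mapsto A_1$, check the Dolan--Grady relations in $\mathcal{O}$ directly, and then use Theorem \ref{Onsrelthm} to produce the recursively defined spanning set $\widetilde{A_m}$, $\widetilde{G_l}$ of $\mathcal{OA}$ satisfying Onsager's relations, so that the composite map fixes the basis $\{A_m, G_l\}$ of $\mathcal{O}$ and both maps are forced to be isomorphisms. One small slip: in your verification of \eqref{DG1} the chain $[A_0,A_{-1}] = -[A_{-1},A_0] = -2G_1 = [A_0,A_1]$ is inconsistent as written (in fact $[A_0,A_{-1}] = 2G_1 = -[A_0,A_1]$, which is exactly what yields $2[A_0,A_1]-2[A_0,A_{-1}] = 4[A_0,A_1]$), but this does not affect the argument.
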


In order to have the tools to prove this, a set of elements in $\mathcal{OA}$ satisfying Onsager's relations is needed.

Let $A_0 = A$ and $A_1 = B$. Let $G_1$ be the commutator
\begin{align}
    G_1 = \frac{1}{2}[A_1,A_0] \label{G1Def}
\end{align}
and define $A_m$, $G_m$ ($m \in \mathbb{Z}$) in $\mathcal{OA}$ by
\begin{align}
    A_{m+1} - A_{m-1} &= [G_1, A_m], \label{Am+1Def} \\
    G_m &= \frac{1}{2}[A_m,A_0]. \label{GmDef}
\end{align}
In particular, we note that $G_0 = \frac{1}{2}[A_0,A_0]= 0$.  

\begin{theo} \emph{(\cite[Theorem 1]{RoanS91})} \label{Onsrelthm} The $A_m$ and $G_m$ as defined above satisfy Onsager's relations; i.e.,
\begin{align*}
    [A_l,A_m] &= 2G_{l-m}, \\
    [G_l,A_m] &= A_{m+l} - A_{m-l}, \\
    [G_l,G_m] &= 0 .
\end{align*}
\end{theo}

\begin{rmk} We have used the same notation, i.e.,$A_m$ and $G_l$, as in the definition of the Onsager algebra. This follows the standard practice in the literature. It will be clear from the context in which algebra we are working. In particular, in Section \ref{sec:DolanAndGradySAlgebraAndOnsagerSRelations} and Section \ref{sec:AlternatePresentationOfTheOnsagerAlgebra} we will always be working in the Dolan-Grady algebra. Of course, after Proposition \ref{IsomDG} has been proven, the distinction between $A_m \in \mathcal{O}$ and $A_m \in \mathcal{OA}$ becomes irrelevant. \end{rmk}

The proof of Theorem \ref{Onsrelthm} consists of a series of lemmas and is quite tedious. Hence, Section \ref{sec:DolanAndGradySAlgebraAndOnsagerSRelations} is dedicated to its proof. Once it is proven, the theorem can then be utilized to prove Proposition \ref{IsomDG} in the last section of this chapter.

\section{The Dolan-Grady Algebra and Onsager's Relations}
\label{sec:DolanAndGradySAlgebraAndOnsagerSRelations}

Theorem \ref{Onsrelthm} is proven using a series of lemmas. One begins by showing that (\ref{Onsrel1}) implies
(\ref{Onsrel2}) and (\ref{Onsrel3}). It then only remains to show that (\ref{Onsrel1}) is
satisfied. To do so, we aim to prove that the commutators $[A_l,A_m]$ depend only on the difference
$l-m$ of the indices. Having this, along with (\ref{GmDef}), we can conclude (\ref{Onsrel1}).

\begin{prop} \label{addRelations} Relation (\ref{Onsrel1}) implies the following relations:
\begin{align}
    A_{n+1} - A_{-n-1} &= -\frac{1}{2}[A_0,[A_1,A_{-n}]], & n \geq 0, \label{addRel1} \\
    A_{n+1} - A_{-n+1} &= \frac{1}{2}[A_1,[A_0,A_n]], & n \geq 1. \label{addRel2}
\end{align}
\end{prop}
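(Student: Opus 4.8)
The plan is to derive each identity by a short bracket computation starting from its right-hand side: first collapse the inner commutator to a single basis element $G_\bullet$ using the hypothesis (\ref{Onsrel1}) (or, in one case, just the definition (\ref{GmDef})), then pull the scalar out of the outer bracket and rewrite it as $[G_\bullet,A_\bullet]$ by antisymmetry, and finally evaluate that bracket with relation (\ref{Onsrel2}). Invoking (\ref{Onsrel2}) is legitimate because it follows from (\ref{Onsrel1}), which is the standing hypothesis of the proposition --- this is precisely the first step recalled in the discussion preceding the proposition. Throughout I would keep a close eye on signs and verify that the stated ranges $n\ge 0$ in (\ref{addRel1}) and $n\ge 1$ in (\ref{addRel2}) are exactly those for which the needed instance of (\ref{Onsrel1}) holds in its form $l>m$ and for which the subscript of the relevant $G$ lies in $\mathbb{N}_+$.

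For (\ref{addRel1}), fix $n \geq 0$. Then $1 > -n$, so (\ref{Onsrel1}) gives $[A_1,A_{-n}] = 2G_{n+1}$, and hence
\[
-\frac{1}{2}[A_0,[A_1,A_{-n}]] \;=\; -[A_0,G_{n+1}] \;=\; [G_{n+1},A_0] \;=\; A_{n+1}-A_{-n-1},
\]
the last equality being (\ref{Onsrel2}) with $l=n+1$, $m=0$. For (\ref{addRel2}), fix $n \geq 1$. By (\ref{GmDef}) we have $[A_0,A_n] = -[A_n,A_0] = -2G_n$, so
\[
\frac{1}{2}[A_1,[A_0,A_n]] \;=\; -[A_1,G_n] \;=\; [G_n,A_1] \;=\; A_{n+1}-A_{1-n} \;=\; A_{n+1}-A_{-n+1},
\]
where the third equality is (\ref{Onsrel2}) with $l=n$, $m=1$.

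I do not expect any genuine obstacle here: the whole content is the two displays above, so the only points demanding care are the signs (from antisymmetry of the bracket and from extracting the factor $2$ out of $G_{n+1}$ and $G_n$) and the bookkeeping of index ranges noted in the first paragraph. If one preferred not to quote (\ref{Onsrel2}), one could instead expand $[A_0,[A_1,A_{-n}]]$ and $[A_1,[A_0,A_n]]$ via the Jacobi identity \eqref{JacobiId} and simplify the resulting terms with (\ref{Am+1Def}) and (\ref{Onsrel1}); but this still ultimately needs the content of (\ref{Onsrel2}) and is longer, so I would use the direct route above.
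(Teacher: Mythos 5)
Your proof is circular. The computations are correct as algebra, but they rest on relation (\ref{Onsrel2}), and the implication ``(\ref{Onsrel1}) $\Rightarrow$ (\ref{Onsrel2})'' is \emph{not} available at this point in the development --- it is the content of Lemma \ref{Ons1imp23}, which comes \emph{after} Proposition \ref{addRelations} and whose proof uses the proposition in an essential way: the paper establishes (\ref{Onsrel2}) for $m=0$ by invoking (\ref{addRel1})$_{l-1}$ and for $m=1$ by invoking (\ref{addRel2})$_l$, and only then propagates to general $m$. The sentence you quote from the discussion preceding the proposition (``One begins by showing that (\ref{Onsrel1}) implies (\ref{Onsrel2}) and (\ref{Onsrel3})'') is announcing the plan, not reporting a result already proved; the proposition is the first step of carrying out that plan. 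Indeed, your two displays show that (\ref{addRel1})$_n$ and (\ref{addRel2})$_n$ are nothing but the instances $(l,m)=(n+1,0)$ and $(l,m)=(n,1)$ of (\ref{Onsrel2}) rewritten via (\ref{Onsrel1}); ``deriving'' them from (\ref{Onsrel2}) therefore proves nothing. A further warning sign you should have heeded: if (\ref{Onsrel2}) were genuinely in hand, the proposition would be a trivial reformulation, whereas the paper spends a page on it.

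What is actually needed is an argument using only (\ref{Onsrel1}) together with the defining relations (\ref{G1Def}), (\ref{Am+1Def}), (\ref{GmDef}). The paper proceeds as follows: Lemma \ref{addLemma} (whose hypotheses $[A_0,A_n]=[A_{-n},A_0]$ and $[A_1,A_{n+1}]=[A_{-n+1},A_1]$ hold under (\ref{Onsrel1}), both sides being $2G_{-n}$) yields the equivalence (\ref{addRel1})$_n \Leftrightarrow$ (\ref{addRel2})$_n$ and the identities (\ref{LHS=RHS1}) and (\ref{LHS=RHS2}). The base cases (\ref{addRel1})$_0$ and (\ref{addRel2})$_1$ follow directly from (\ref{G1Def}) and (\ref{Am+1Def}), since $A_1-A_{-1}=[G_1,A_0]=-\tfrac12[A_0,[A_1,A_0]]$ and $A_2-A_0=[G_1,A_1]=\tfrac12[A_1,[A_0,A_1]]$. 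The general case is then an induction on $n$ that combines (\ref{LHS=RHS1}) and (\ref{LHS=RHS2}) with the case $n-2$ of the induction hypothesis. Your backup suggestion (expand via the Jacobi identity and use (\ref{Am+1Def})) is closer to the right idea, but as you yourself note it ``still ultimately needs the content of (\ref{Onsrel2})'' --- which is exactly why the inductive bootstrap through Lemma \ref{addLemma} cannot be bypassed.
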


In order to prove this, one requires the following lemma:
\begin{lem} \emph{(\cite[Lemma 2]{RoanS91})} \label{addLemma} Let $n \geq 1$.
\begin{itemize}
    \item[(i)] If $[A_0,A_n] = [A_{-n},A_0]$, then (\ref{addRel1})$_n \Leftrightarrow$ (\ref{addRel2})$_n$. \\
    \item[(ii)] If $[A_1,A_{n+1}] = [A_{-n+1},A_1]$, then (\ref{addRel1})$_{n-1} \Leftrightarrow$ (\ref{addRel2})$_{n+1}$.
\end{itemize}
\end{lem}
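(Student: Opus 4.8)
The plan is to prove each equivalence by a direct computation that transforms one of the two identities (\ref{addRel1})$_n$ or (\ref{addRel2})$_n$ into the other, using the Jacobi identity and the defining relations (\ref{Am+1Def}), (\ref{G1Def}), (\ref{GmDef}), together with the hypothesis at hand. The key observation that makes this work is that the right-hand sides of (\ref{addRel1}) and (\ref{addRel2}) are both double brackets involving $A_0$ and $A_1$, so the Jacobi identity lets us pivot from one to the other at the cost of a term $[[A_0,A_1],A_n] = -2[G_1,A_n] = -2(A_{n+1}-A_{n-1})$ coming from (\ref{G1Def}) and (\ref{Am+1Def}).

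For part (i): starting from (\ref{addRel1})$_n$, namely $A_{n+1}-A_{-n-1} = -\tfrac12[A_0,[A_1,A_{-n}]]$, I would apply the Jacobi identity to rewrite $[A_0,[A_1,A_{-n}]]$ as $[A_1,[A_0,A_{-n}]] + [[A_0,A_1],A_{-n}]$. The first term is where the hypothesis $[A_0,A_n] = [A_{-n},A_0]$ enters: it says $[A_0,A_{-n}] = -[A_{-n},A_0] = -[A_0,A_n]$... more carefully, $[A_0,A_n] = [A_{-n},A_0] = -[A_0,A_{-n}]$, so $[A_1,[A_0,A_{-n}]] = -[A_1,[A_0,A_n]]$, converting the first term into (twice the negative of) the right-hand side of (\ref{addRel2})$_n$. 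The second term $[[A_0,A_1],A_{-n}] = -2[G_1,A_{-n}] = -2(A_{-n+1} - A_{-n-1})$ by (\ref{Am+1Def}). Assembling these and using Remark \ref{rmkG-m}-style index bookkeeping should collapse everything to the identity $A_{n+1}-A_{-n+1} = \tfrac12[A_1,[A_0,A_n]]$, which is (\ref{addRel2})$_n$; reversing the steps gives the converse.

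For part (ii), the strategy is identical but shifted: I would start from (\ref{addRel1})$_{n-1}$, i.e. $A_n - A_{-n} = -\tfrac12[A_0,[A_1,A_{-n+1}]]$, apply Jacobi to $[A_0,[A_1,A_{-n+1}]] = [A_1,[A_0,A_{-n+1}]] + [[A_0,A_1],A_{-n+1}]$, use the hypothesis $[A_1,A_{n+1}] = [A_{-n+1},A_1]$ to rewrite $[A_1,[A_0,A_{-n+1}]]$ in terms of $[A_1,[A_0,A_{n}]]$-type expressions (being careful: here the hypothesis relates $A_1$-brackets, so one must first move $A_0$ past, or equivalently argue that $[A_0,A_{-n+1}]$ and $[A_0,A_{n+1}]$ are related once the $A_1$-hypothesis is combined with a Jacobi step), and reduce the leftover $[[A_0,A_1],A_{-n+1}]$ term via (\ref{Am+1Def}). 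The target is (\ref{addRel2})$_{n+1}$, i.e. $A_{n+2}-A_{-n} = \tfrac12[A_1,[A_0,A_{n+1}]]$.

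The main obstacle I anticipate is purely bookkeeping: keeping the index shifts straight (the $n$, $n-1$, $n+1$ variants are genuinely different statements and it is easy to conflate them), and correctly tracking signs through the repeated use of Remark \ref{rmkG-m} (that $G_{-m}=-G_m$) and antisymmetry of the bracket. There is also a subtlety in part (ii) that the hypothesis is about $A_1$-brackets rather than $A_0$-brackets, so an extra Jacobi manipulation is needed before the hypothesis is applicable — getting that intermediate step right is the only genuinely non-mechanical point. Since the problem only asks for an equivalence under a stated hypothesis (not an unconditional identity), no induction is needed here; this lemma is a self-contained algebraic identity whose proof is a finite chain of substitutions.
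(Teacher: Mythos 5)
Your proposal is correct and is essentially the paper's argument: both parts come down to one application of the Jacobi identity together with the hypothesis, (\ref{G1Def}) and (\ref{Am+1Def}), producing an unconditional identity whose two sides are exactly the right-hand sides of (\ref{addRel1}) and (\ref{addRel2}) up to known terms, and your part (i) computation matches the paper's line for line. The one point you flag in part (ii) resolves more simply than you anticipate, and in the opposite order: the hypothesis should be applied \emph{before} Jacobi, directly to the inner bracket already present in (\ref{addRel1})$_{n-1}$, since $[A_1,A_{-n+1}]=-[A_{-n+1},A_1]=-[A_1,A_{n+1}]$ converts $-\tfrac12[A_0,[A_1,A_{-n+1}]]$ into $+\tfrac12[A_0,[A_1,A_{n+1}]]$; a single Jacobi step then yields $\tfrac12[A_1,[A_0,A_{n+1}]]+[A_{n+1},G_1]=\tfrac12[A_1,[A_0,A_{n+1}]]-(A_{n+2}-A_n)$, and the equivalence with (\ref{addRel2})$_{n+1}$ falls out. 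Attempting instead to use the hypothesis on $[A_1,[A_0,A_{-n+1}]]$ after a premature Jacobi step, as your text literally suggests, would stall, so do adopt the reversed order.
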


\noindent \textbf{Notation:} Note that the notation (\ref{addRel1})$_m$ above refers to the relation (\ref{addRel1}) for $n = m$ (and similarly for (\ref{addRel2})$_m$).

\begin{proof} Let $n \geq 1$.

(i) Assume $[A_0,A_n] = [A_{-n},A_0]$ and apply $\frac{1}{2}\ad_{A_1}$ to get
\[\frac{1}{2}[A_1,[A_0,A_n]] = \frac{1}{2}[A_1,[A_{-n},A_0]].\]
By the Jacobi identity, we have that
\[ [A_1,[A_{-n},A_0]] = [A_{-n},[A_1,A_0]] - [A_0,[A_1,A_{-n}]].\]
It then follows that
\begin{align*}
	\frac{1}{2}[A_1,[A_0,A_n]]	&= \frac{1}{2}[A_{-n},[A_1,A_0]] - \frac{1}{2}[A_0,[A_1,A_{-n}]] \\
                              &= [A_{-n},G_1] - \frac{1}{2}[A_0,[A_1,A_{-n}]] \tag*{by (\ref{G1Def})} \\
                              &= -(A_{-n+1} - A_{-n-1}) - \frac{1}{2}[A_0,[A_1,A_{-n}]] \tag*{by (\ref{Am+1Def})}.
\end{align*}
So we have that
\begin{align}
    \frac{1}{2}[A_1,[A_0,A_n]] + A_{-n+1} = A_{-n-1} - \frac{1}{2}[A_0,[A_1,A_{-n}]]. \label{LHS=RHS1}
\end{align}
Since
\begin{align*}
    (\ref{addRel1})_n   &\Longleftrightarrow A_{n+1} = \text{RHS of (\ref{LHS=RHS1})} \\ 
                        &\Longleftrightarrow A_{n+1} = \text{LHS of (\ref{LHS=RHS1})} \\
                        &\Longleftrightarrow (\ref{addRel2})_n,
\end{align*}
it allows us to conclude that (\ref{addRel1})$_n \Leftrightarrow$ (\ref{addRel2})$_n$.

(ii) Assume $[A_1,A_{n+1}] = [A_{-n+1},A_1]$ and apply $\frac{1}{2}\ad_{A_0}$ to get
\begin{align*}
    \frac{1}{2}[A_0,[A_{-n+1},A_1]] &= \frac{1}{2}[A_0,[A_1,A_{n+1}]] \\
    \Longleftrightarrow ~ -\frac{1}{2}[A_0,[A_1,A_{-n+1}]] &= \frac{1}{2}[A_0,[A_1,A_{n+1}]].
\end{align*}
By the Jacobi identity, we have that
\[ [A_0,[A_1,A_{n+1}]] = [A_1,[A_0,A_{n+1}]] + [A_{n+1},[A_1,A_0]]. \]
It then follows that
\begin{align*}
-\frac{1}{2}[A_0,[A_1,A_{-n+1}]] &= \frac{1}{2}[A_1,[A_0,A_{n+1}]] + \frac{1}{2}[A_{n+1},[A_1,A_0]] \\
                          	&= \frac{1}{2}[A_1,[A_0,A_{n+1}]] + [A_{n+1},G_1] \tag*{by (\ref{G1Def})} \\
                            &= \frac{1}{2}[A_1,[A_0,A_{n+1}]] - (A_{n+2} - A_n) \tag*{by (\ref{Am+1Def})}.
\end{align*}
So we have that
\begin{align}
    A_n + \frac{1}{2}[A_0,[A_1,A_{-n+1}]] = A_{n+2} - \frac{1}{2}[A_1,[A_0,A_{n+1}]]. \label{LHS=RHS2}
\end{align}
Next we recall that
\begin{align*}
    (\ref{addRel1})_{n-1}: \quad & A_n - A_{-n} = -\frac{1}{2}[A_0,[A_1,A_{-n+1}]], \\
    (\ref{addRel2})_{n+1}: \quad & A_{n+2} - A_{-n} = \frac{1}{2}[A_1,[A_0,A_{n+1}]].
\end{align*}
Hence it follows that
\begin{align*}
	(\ref{addRel1})_{n-1}	&\Longleftrightarrow A_{-n} = \text{LHS of (\ref{LHS=RHS2})} \\
                        &\Longleftrightarrow A_{-n} = \text{RHS of (\ref{LHS=RHS2})} \\
                        &\Longleftrightarrow (\ref{addRel2})_{n+1}.
\end{align*}
This then allows us to conclude that (\ref{addRel1})$_{n-1} \Leftrightarrow$ (\ref{addRel2})$_{n+1}$.
\end{proof}

\begin{proof}[Proof of Proposition \ref{addRelations}] First we note that 
\begin{align*}
	[A_0,A_n] &= 2G_{-n} \quad \text{by (\ref{Onsrel1})}\\
	[A_{-n},A_0] &= 2G_{-n} \quad \text{by (\ref{Onsrel1})}\\
	\implies [A_0,A_n] &= [A_{-n},A_0].
\end{align*}
It then follows from Lemma \ref{addLemma}(i) that (\ref{addRel1})$_n \Leftrightarrow$ (\ref{addRel2})$_n$. Furthermore, it also follows that (\ref{LHS=RHS1}) holds. Similarly, 
\begin{align*}
	[A_1,A_{n+1}] &= 2G_{-n} \quad \text{by (\ref{Onsrel1})}\\
	[A_{-n+1},A_1] &= 2G_{-n} \quad \text{by (\ref{Onsrel1})}\\
	\implies [A_1,A_{n+1}] &= [A_{-n+1},A_1].
\end{align*}
So it follows that (\ref{LHS=RHS2}) holds.

Now we can use all of this to prove our proposition. From above, we know that it is enough to prove (\ref{addRel1})$_n$ and then (\ref{addRel2})$_n$ will follow. To do so we proceed by induction.

\textit{Base Cases:} The following shows (\ref{addRel1})$_0$:
\begin{align*}
    A_1 - A_{-1} &= [G_1,A_0] \tag*{by (\ref{Am+1Def})} \\
                 &= \frac{1}{2}[[A_1,A_0],A_0]  \tag*{by (\ref{G1Def})}\\
                 &= -\frac{1}{2}[A_0,[A_1,A_0]].
\end{align*}
\noindent When it comes to showing (\ref{addRel1})$_1$, we note that it is not only equivalent but also simpler to prove (\ref{addRel2})$_1$.
\begin{align*}
    A_2 - A_{0} &= [G_1,A_1] \tag*{by (\ref{Am+1Def})} \\
                &= \frac{1}{2}[[A_1,A_0],A_1]  \tag*{by (\ref{G1Def})}\\
                &= \frac{1}{2}[A_1,[A_0,A_1]].
\end{align*}
\noindent \textit{Induction Hypothesis:} Assume (\ref{addRel1})$_m$ holds for $0 \leq m \leq n-1$.

\noindent \textit{Inductive Step:} Prove for $m = n$. From (\ref{LHS=RHS1}) and (\ref{LHS=RHS2}), it follows that
\begin{align*}
	&A_{-n-1} - A_{-n+1} - \frac{1}{2}[A_0,[A_1,A_{-n}]] = A_{n+1} - A_{n-1} - \frac{1}{2}[A_0,[A_1,A_{-n+2}]] \\
	&\Leftrightarrow -\frac{1}{2}[A_0,[A_1,A_{-n}]] = A_{n+1} - A_{-n-1} + A_{-n+1} - A_{n-1} - \frac{1}{2}[A_0,[A_1,A_{-(n-2)}]]\\
	&\Leftrightarrow -\frac{1}{2}[A_0,[A_1,A_{-n}]] = A_{n+1} - A_{-n-1} + A_{-n+1} - A_{n-1} + A_{n-1} - A_{-n+1}\\
	&\Leftrightarrow -\frac{1}{2}[A_0,[A_1,A_{-n}]] = A_{n+1} - A_{-n-1} \\
	&\Leftrightarrow	(\ref{addRel1})_n.													
\end{align*}
Observe that the $n-2$ case of the induction hypothesis was used above.
\end{proof}

\begin{lem} \emph{(\cite[Lemma 3]{RoanS91})} \label{Ons1imp23} Relation (\ref{Onsrel1}) implies relations (\ref{Onsrel2}) and (\ref{Onsrel3}).
\end{lem}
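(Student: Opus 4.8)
The plan is to assume relation \eqref{Onsrel1} (i.e.\ that $[A_l,A_m]=2G_{l-m}$ for $l>m$, where $G_{l-m}$ is defined by \eqref{GmDef}) and derive \eqref{Onsrel2} and \eqref{Onsrel3}. The first target, \eqref{Onsrel2}, says $[G_l,A_m]=A_{m+l}-A_{m-l}$ for all $l\in\mathbb{N}_+$ and $m\in\mathbb{Z}$. For $l=1$ this is exactly the defining relation \eqref{Am+1Def}, so the natural strategy is induction on $l$. For the inductive step I would write $G_{l}$ using \eqref{GmDef}, namely $G_l=\tfrac12[A_l,A_0]$, and then use \eqref{Onsrel1} together with Proposition~\ref{addRelations} to re-express $G_{l}$ in terms of $G_1$ and lower-index $G$'s, or more directly use the relations \eqref{addRel1}, \eqref{addRel2} which rewrite differences $A_{n+1}-A_{-n-1}$ and $A_{n+1}-A_{-n+1}$ as double commutators of the form $\pm\tfrac12[A_0,[A_1,A_{-n}]]$ and $\tfrac12[A_1,[A_0,A_n]]$. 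Expanding these double brackets with the Jacobi identity produces expressions like $[G_1,A_{\pm n}]$, which by \eqref{Am+1Def} telescope; combined with the induction hypothesis on $[G_j,A_m]$ for $j<l$, this should yield $[G_l,A_m]=A_{m+l}-A_{m-l}$.

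The second target, \eqref{Onsrel3}, says the $G_l$ commute among themselves. Using \eqref{GmDef}, $[G_l,G_m]=\tfrac12[G_l,[A_m,A_0]]$, and by the Jacobi identity this equals $\tfrac12\bigl([[G_l,A_m],A_0]+[A_m,[G_l,A_0]]\bigr)$. Now I apply \eqref{Onsrel2} (just established) to both inner brackets: $[G_l,A_m]=A_{m+l}-A_{m-l}$ and $[G_l,A_0]=A_{l}-A_{-l}$. This gives $[G_l,G_m]=\tfrac12\bigl([A_{m+l}-A_{m-l},A_0]+[A_m,A_l-A_{-l}]\bigr)$. Every bracket on the right is a bracket of two $A$'s, so \eqref{Onsrel1} (and Remark~\ref{rmkG-m}, that $G_{-j}=-G_j$) turns the whole thing into a combination of $G$'s: $\tfrac12\bigl(2G_{m+l}-2G_{m-l}+2G_{m-l}-2G_{m+l}\bigr)=0$, using $[A_m,A_l]=2G_{m-l}$, $[A_m,A_{-l}]=2G_{m+l}$, etc., and being careful with index orderings to apply \eqref{Onsrel1} correctly (taking signs from $G_{-j}=-G_j$ when the larger index is not on the left). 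So \eqref{Onsrel3} falls out essentially formally once \eqref{Onsrel2} is in hand.

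The main obstacle is the induction proving \eqref{Onsrel2}, specifically bookkeeping the index shifts and making the right choice of how to rewrite $G_l$. One clean route: first establish the recursion $G_{l+1}-G_{l-1}=[G_1,G_l]\cdot(\text{something})$—no, rather, derive from \eqref{addRel1}/\eqref{addRel2} a formula expressing $G_{l}$ (equivalently $[A_l,A_0]$) via nested brackets of $A_0,A_1$, and then compute $[G_l,A_m]$ by repeatedly applying \eqref{Am+1Def} and the Jacobi identity. An alternative that avoids heavy computation is to prove \eqref{Onsrel2} by induction on $l$ using the identity $A_{l+1}-A_{l-1}=[G_1,A_l]$ to relate $G_{l+1}-G_{l-1}$ to $[G_1,A_l]$ brackets via \eqref{GmDef} and Jacobi, thereby getting a two-step recursion $[G_{l+1},A_m]-[G_{l-1},A_m]$ in terms of already-known quantities; the base cases $l=1$ (from \eqref{Am+1Def}) and $l=2$ (a short direct check using \eqref{addRel2}$_1$) then start the induction. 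Either way the only real work is careful index arithmetic, which I would organize so that each step reduces to \eqref{Onsrel1}, \eqref{Am+1Def}, the Jacobi identity, and the inductive hypothesis.
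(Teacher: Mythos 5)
Your treatment of \eqref{Onsrel3} is exactly the paper's: expand $[G_l,G_m]=\tfrac12[G_l,[A_m,A_0]]$ by the Jacobi identity, substitute \eqref{Onsrel2} into the inner brackets, and let \eqref{Onsrel1} together with $G_{-j}=-G_j$ telescope everything to zero. For \eqref{Onsrel2}, however, you take a genuinely different route. The paper fixes $l$ and works in the $m$-direction: it first proves the cases $m=0$ and $m=1$ for all $l$ directly from \eqref{addRel1}$_{l-1}$ and \eqref{addRel2}$_l$, then shows $[G_1,G_l]=0$, then establishes the identity $A_{m+l}=\bigl(g_l(\ad_{G_1})\bigr)(A_m)+\bigl(h_l(\ad_{G_1})\bigr)(A_{m+1})$ with Fibonacci-type polynomials $g_l,h_l$, and finally pushes $\ad_{G_l}$ through these polynomials (this is where $[G_1,G_l]=0$ is essential) to get all $m$ at once. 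Your proposal instead inducts on $l$ with base case \eqref{Am+1Def}, which is simpler and avoids the polynomial machinery entirely; the inductive step can be closed cleanly by writing $2G_l=[A_a,A_b]$ for a suitable pair with $a-b=l$ and applying Jacobi plus \eqref{Onsrel1}: one gets $[G_l,A_m]=[G_{m-b},A_a]+[G_{a-m},A_b]$, and the inductive hypothesis applies to both terms precisely when $b=m-j$ with $0<j<l$, yielding $[G_j,A_{m+l-j}]+[G_{l-j},A_{m-j}]=A_{m+l}-A_{m-l}$. The one point your sketch leaves implicit is this choice of representative: with the naive $G_l=\tfrac12[A_l,A_0]$ the resulting $G$-indices $m$ and $l-m$ both have absolute value less than $l$ only when $0<m<l$, so for general $m$ you must use the shifted pair (which \eqref{Onsrel1} licenses, since it makes $[A_a,A_b]$ depend only on $a-b$). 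With that choice made explicit, your induction works uniformly for all $m\in\mathbb{Z}$ and all $l\ge 2$, and is arguably cleaner than the paper's argument; what the paper's route buys is that the intermediate facts ($[G_1,G_l]=0$ and the $\ad_{G_1}$-polynomial expression for $A_{m+l}$) are of independent use elsewhere in the chapter.
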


\begin{proof} We begin by showing that (\ref{Onsrel1}) implies (\ref{Onsrel2}). Recall that Proposition \ref{addRelations} tells us that (\ref{Onsrel1}) implies (\ref{addRel1}) and (\ref{addRel2}). The following shows that (\ref{Onsrel2}) is satisfied for $m=0$:
\begin{align*}
    [G_l, A_0]  &= \frac{1}{2} [[A_1,A_{1-l}],A_0]  \tag*{by (\ref{Onsrel1})} \quad \\
                &= - \frac{1}{2} [A_0,[A_1,A_{-(l-1)}]] \\
                &= A_l - A_{-l}  \tag*{by (\ref{addRel1})$_{l-1}$.}
\end{align*}
The following shows that (\ref{Onsrel2}) is satisfied for $m=1$:
\begin{align*}
    [G_l, A_1]  &= \frac{1}{2} [[A_l,A_0],A_1]  \tag*{by (\ref{Onsrel1})} \\
                &= \frac{1}{2} [A_1,[A_0,A_l]] \\
                &= A_{1+l} - A_{1-l}  \tag*{by (\ref{addRel2})$_l$.}
\end{align*}
From (\ref{Onsrel1}) we have that $[A_l,A_0] = 2G_l$. We then apply $\ad_{G_m}$ to get:
\begin{align}
	2[G_m,G_l] = [G_m,[A_l,A_0]].
\end{align} 
Furthermore, by the Jacobi identity, we have the following
\begin{align}
	2[G_m,G_l] = [[G_m,A_l],A_0] + [A_l,[G_m,A_0]]. \label{GmGl}
\end{align}
For $m = 1$, we have
\begin{align*}
	2[G_1,G_l] 	&= [[G_1,A_l],A_0] + [A_l,[G_1,A_0]] \\
							&= [A_{l+1} - A_{l-1},A_0] + [A_l,A_1 - A_{-1}]  \tag*{by (\ref{Am+1Def})} \\
							&= 2G_{l+1} - 2G_{l-1} + 2G_{l-1} - 2G_{l+1}  \tag*{by (\ref{Onsrel1})} \\
							&= 0.
\end{align*}
Hence showing that (\ref{Onsrel3}) is satisfied for $l = 1$ or $m = 1$. Next, we note that for any $m \in \mathbb{Z}$, it follows from (\ref{Am+1Def}) that \[ A_{m+2} = A_m + \ad_{G_1}A_{m+1}. \] We claim that  
\begin{align}
	A_{m+l} = \big(g_l(\ad_{G_1})\big)(A_m) + \big(h_l(\ad_{G_1})\big)(A_{m+1}) &\qquad \text{for $l \geq 0$, $m \in \mathbb{Z}$} \label{Am+lDef},
\end{align}
where 
\begin{align*}
g_l(x) &= \begin{cases} 1 & \text{if $l=0$,}\\ 0 & \text{if $l=1$,}\\ xg_{l-1}(x)+g_{l-2}(x) & \text{if $l \geq 2$,} \end{cases} &h_l(x) &= \begin{cases} 0 & \text{if $l=0$,}\\ 1 & \text{if $l=1$,}\\ xh_{l-1}(x)+h_{l-2}(x) & \text{if $l \geq 2$.} \end{cases}
\end{align*}
To prove this claim, we proceed by induction on $l \in \mathbb{N}$. First, we consider the base cases:
\begin{align*}
&l=0,&	A_m			&= \big(g_0(\ad_{G_1})\big)(A_m) + \big(h_0(\ad_{G_1})\big)(A_{m+1}), \\
&l=1,&	A_{m+1}	&= \big(g_1(\ad_{G_1})\big)(A_m) + \big(h_1(\ad_{G_1})\big)(A_{m+1}).
\end{align*}
Assume (\ref{Am+lDef}) holds for $l \geq 2$. We proceed to show it for $l+1$.
\begin{align*}
A_{m+l+1} ={}& A_{m+l-1} + [G_1,A_{m+l}] \tag*{by (\ref{Am+1Def})}\\
					={}& \big(g_{l-1}(\ad_{G_1})\big)(A_m) + \big(h_{l-1}(\ad_{G_1})\big)(A_{m+1}) \\
						&+ [G_1,\big(g_l(\ad_{G_1})\big)(A_m) + \big(h_l(\ad_{G_1})\big)(A_{m+1})]\\
					={}& \big((g_{l-1} + xg_l)(\ad_{G_1})\big)(A_m) + \big((h_{l-1} + xh_l)(\ad_{G_1})\big)(A_{m+1}) \\
					={}& \big(g_{l+1}(\ad_{G_1})\big)(A_m) + \big(h_{l+1}(\ad_{G_1})\big)(A_{m+1}).
\end{align*} 
Hence proving our claim, (\ref{Am+lDef}), which we can now use as follows:
\begin{align*}
[G_l,A_m] &= [G_l, \big(g_m(\ad_{G_1})\big)A_0 + \big(h_m(\ad_{G_1})\big)A_1] \\
					&= \big(g_m(\ad_{G_1})\big)[G_l,A_0] + \big(h_m(\ad_{G_1})\big)[G_l,A_1] \tag*{since $[G_1,G_l]=0$}\\
					&= \big(g_m(\ad_{G_1})\big)(A_l-A_{-l}) + \big(h_m(\ad_{G_1})\big)(A_{l+1}-A_{1-l}) \tag*{by above}\\
					&= \big(\big(g_m(\ad_{G_1})\big)A_l + \big(h_m(\ad_{G_1})\big)A_{l+1}\big) - \big(\big(g_m(\ad_{G_1})\big)A_{-l} + \big(h_m(\ad_{G_1})\big)A_{1-l})\\
					&= A_{l+m} - A_{l-m}.
\end{align*}
Hence, we have (\ref{Onsrel2}).

The next step is to prove that (\ref{Onsrel1}) and (\ref{Onsrel2}) imply (\ref{Onsrel3}). From (\ref{GmGl}) above, we have that
\begin{align*}
 2[G_m,G_l] &= [[G_m,A_l],A_0] + [A_l,[G_m,A_0]] \\
            &= [A_{m+l} - A_{l-m},A_0] + [A_l, A_m - A_{-m}] \tag*{by (\ref{Onsrel2})} \\
            &= 2G_{l+m} - 2G_{l-m} + 2G_{l-m} - 2G_{l+m} \tag*{by (\ref{Onsrel1})} \\
            &= 0.
\end{align*}
Hence, we have (\ref{Onsrel3}). So we have proven that
\begin{itemize}
    \item[(i)] (\ref{Onsrel1}) implies (\ref{Onsrel2}), and
    \item[(ii)] (\ref{Onsrel1}) and (\ref{Onsrel2}) imply (\ref{Onsrel3}).
\end{itemize}
So finally, we conclude that the relation (\ref{Onsrel1}) implies relations (\ref{Onsrel2}) and (\ref{Onsrel3}).
\end{proof}

\begin{lem} \emph{(\cite[Lemma 1]{DaviesB91})} $A_{-1}$, $A_0$, $A_1$, and $A_2$ defined by relations (\ref{Am+1Def}) and (\ref{GmDef}) satisfy
\begin{align}
    [A_{-1}, A_0] = [A_0,A_1] = [A_1,A_2] \label{addRel3}
\end{align}
if and only if $A_0$ and $A_1$ satisfy the Dolan-Grady relations (\ref{DG1}) and (\ref{DG2}) with $A_0 = A$ and $A_1 = B$.
\end{lem}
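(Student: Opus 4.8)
The plan is to turn the biconditional into a short, purely formal computation inside the free Lie algebra on the two generators $A = A_0$ and $B = A_1$. First I would unwind the recursive definitions: by (\ref{GmDef}) (with $m=1$) we have $G_1 = \tfrac12[A_1,A_0]$, and then (\ref{Am+1Def}) at $m=0$ and $m=1$ gives the closed forms
\[ A_{-1} = A_1 - \tfrac12[[A_1,A_0],A_0], \qquad A_2 = A_0 + \tfrac12[[A_1,A_0],A_1]. \]
Thus each of $A_{-1}, A_0, A_1, A_2$ is a bracket polynomial in $A_0$ and $A_1$ alone, and the three commutators appearing in (\ref{addRel3}) can all be written in those terms.

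Next I would expand the two outer commutators of (\ref{addRel3}) using nothing but bilinearity and antisymmetry of the bracket (repeatedly rewriting $[A_1,A_0] = -[A_0,A_1]$ and moving an $\ad$ past a sign). I expect this to produce
\[ [A_{-1},A_0] = -[A_0,A_1] + \tfrac12[A_0,[A_0,[A_0,A_1]]], \qquad [A_1,A_2] = -[A_0,A_1] - \tfrac12[A_1,[A_1,[A_1,A_0]]], \]
while the middle commutator is of course just $[A_0,A_1]$. Rearranging, these say
\begin{gather*}
[A_{-1},A_0] - [A_0,A_1] = \tfrac12\big([A_0,[A_0,[A_0,A_1]]] - 4[A_0,A_1]\big), \\
[A_1,A_2] - [A_0,A_1] = -\tfrac12\big([A_1,[A_1,[A_1,A_0]]] - 4[A_1,A_0]\big).
\end{gather*}

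Given these two identities the lemma falls out at once. The chain $[A_{-1},A_0] = [A_0,A_1] = [A_1,A_2]$ holds if and only if both left-hand differences above vanish; by the first displayed identity the vanishing of the first is equivalent to $[A_0,[A_0,[A_0,A_1]]] = 4[A_0,A_1]$, i.e.\ to (\ref{DG1}), and by the second the vanishing of the second is equivalent to $[A_1,[A_1,[A_1,A_0]]] = 4[A_1,A_0]$, i.e.\ to (\ref{DG2}). Thus the two equalities of (\ref{addRel3}) encode the two Dolan-Grady relations separately: for the forward direction one assumes (\ref{DG1}) and (\ref{DG2}) and reads off the chain, and for the converse one assumes the chain and reads off both relations. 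Note that the Jacobi identity is never used, only bilinearity and antisymmetry, which is consistent with the fact that $\mathcal{OA}$ is a quotient of a free Lie algebra where Jacobi holds automatically.

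The only real work, and therefore the place to be careful, is the sign bookkeeping in the two expansions --- concretely, checking that $[[[A_1,A_0],A_0],A_0] = -[A_0,[A_0,[A_0,A_1]]]$ and $[A_1,[[A_1,A_0],A_1]] = -[A_1,[A_1,[A_1,A_0]]]$. These are short but error-prone; everything else is formal.
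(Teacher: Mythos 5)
Your proposal is correct and is essentially the paper's own argument: the paper likewise unwinds (\ref{Am+1Def}) and (\ref{GmDef}) to get $A_{-1}=A_1-\tfrac12[A_0,[A_0,A_1]]$ and $A_2=A_0-\tfrac12[A_1,[A_1,A_0]]$ and derives the same two triple-bracket identities, merely presenting them as two separate implications rather than as two unconditional identities from which both directions are read off. Your sign checks are right, and your observation that only bilinearity and antisymmetry are needed also matches the paper's computation.
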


\begin{proof} We begin by assuming (\ref{addRel3}) and showing that $A_0$ and $A_1$ satisfy the relations (\ref{DG1}) and (\ref{DG2}) with $A_0 = A$ and $A_1 = B$.
\begin{align*}
    [A_0,A_{-1}]    &= [A_0,A_1 - [G_1,A_0]]  \tag*{by (\ref{Am+1Def})} \\
                    &= [A_0,A_1] + [A_0,[A_0,G_1]] \\
                    &= [A_0,A_1] - \frac{1}{2}[A_0,[A_0,[A_0,A_1]]]  \tag*{by (\ref{GmDef}).}
\end{align*}
Since $[A_0,A_{-1}] = -[A_0,A_1]$, it follows that $[A_0,[A_0,[A_0,A_1]]] = 4[A_0,A_1]$. Similarly,
\begin{align*}
    [A_1,A_2]   &= [A_1,A_0] + [A_1,[G_1,A_1]]  \tag*{by (\ref{Am+1Def})} \\
                &= [A_1,A_0] - \frac{1}{2}[A_1,[A_1,[A_1,A_0]]]  \tag*{by (\ref{GmDef}).}
\end{align*}
Since $[A_1,A_2] = [A_0,A_1]$, it follows that $[A_1,[A_1,[A_1,A_0]]] = 4[A_1,A_0]$. Next, we prove the other direction by assuming that $A_0$ and $A_1$ satisfy the relations (\ref{DG1}) and (\ref{DG2}) with $A_0 = A$ and $A_1 = B$.
\begin{align*}
    A_2 &= A_0 + [G_1,A_1]  \tag*{by (\ref{Am+1Def})} \\
        &= A_0 + \frac{1}{2}[[A_1,A_0],A_1]  \tag*{by (\ref{GmDef})} \\
        &= A_0 - \frac{1}{2}[A_1,[A_1,A_0]].
\end{align*}
By applying $\ad_{A_1}$ we get
\begin{align*}
    [A_1,A_2]   &= [A_1,A_0] - \frac{1}{2}[A_1,[A_1,[A_1,A_0]]] \\
                &= [A_1,A_0] - 2[A_1,A_0]  \tag*{by (\ref{DG2})} \\
                &= [A_0,A_1].
\end{align*}
Similarly, we have that
\begin{align*}
    A_{-1}  &= A_1 - [G_1,A_0] \tag*{by (\ref{Am+1Def})} \\
            &= A_1 - \frac{1}{2}[[A_1,A_0],A_0]  \tag*{by (\ref{GmDef})} \\
            &= A_1 - \frac{1}{2}[A_0,[A_0,A_1]].
\end{align*}
By applying $\ad_{A_0}$ we get
\begin{align*}
    [A_0,A_{-1}]	&= [A_0,A_1] - \frac{1}{2}[A_0,[A_0,[A_0,A_1]]] \\
                  &= [A_0,A_1] - 2[A_0,A_1] \\
                  &= [A_1,A_0].
\end{align*}
Equivalently, $[A_{-1},A_0] = [A_0,A_1]$.
\end{proof}

\begin{lem} \emph{(\cite[Lemma 4]{DaviesB91}, \cite[Lemma 4]{RoanS91})} If $A_0$ and $A_1$ satisfy the Dolan-Grady relations (\ref{DG1}) and (\ref{DG2}), then
\begin{align}
    [A_{-2},A_0] = [A_{-1},A_1] = [A_0,A_2] = [A_1,A_3]. \label{addRel4}
\end{align}
\end{lem}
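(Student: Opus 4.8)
The plan is to bootstrap from the preceding lemma, which under the Dolan--Grady relations supplies the three-term chain $[A_{-1},A_0] = [A_0,A_1] = [A_1,A_2]$, by applying $\ad_{G_1}$ to it. First I would record the instances of the recursion (\ref{Am+1Def}) that are needed: evaluating it at $m=-1,0,1,2$ gives
\[ [G_1,A_{-1}] = A_0 - A_{-2}, \qquad [G_1,A_0] = A_1 - A_{-1}, \qquad [G_1,A_1] = A_2 - A_0, \qquad [G_1,A_2] = A_3 - A_1 . \]

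Applying $\ad_{G_1}$ to each member of $[A_{-1},A_0] = [A_0,A_1] = [A_1,A_2]$ and expanding with the Jacobi identity in the derivation form $[G_1,[x,y]] = [[G_1,x],y] + [x,[G_1,y]]$, every resulting double bracket collapses, via the four displayed identities, to a difference of two of the commutators appearing in (\ref{addRel4}). Writing $p = [A_{-2},A_0]$, $q = [A_{-1},A_1]$, $r = [A_0,A_2]$, and $s = [A_1,A_3]$, this yields
\[ q - p \;=\; r - q \;=\; s - r . \]

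It then remains to anchor these four quantities with one further identity, and $q = r$ suffices. Using $A_{-1} = A_1 - [G_1,A_0]$ one has $q = [A_{-1},A_1] = -[[G_1,A_0],A_1] = [A_1,[G_1,A_0]]$; expanding the last expression by Jacobi as $[[A_1,G_1],A_0] + [G_1,[A_1,A_0]]$, the second term vanishes since $[A_1,A_0] = 2G_1$ by (\ref{G1Def}) and $[G_1,G_1]=0$, while $[A_1,G_1] = A_0 - A_2$ by the recursion; hence $q = [A_0-A_2,A_0] = [A_0,A_2] = r$. (This step does not even use the Dolan--Grady relations.) Substituting $q = r$ into $q - p = r - q = s - r$ forces $p = r$ and $s = r$, so $p = q = r = s$, which is exactly (\ref{addRel4}).

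The argument is essentially mechanical, the only care needed being with signs when expanding the Jacobi identity and the recursion. There is no genuine obstacle: the real work --- producing the length-three chain $[A_{-1},A_0]=[A_0,A_1]=[A_1,A_2]$ --- was already carried out in the previous lemma, and a single extra Jacobi computation suffices to anchor the four commutators one step longer.
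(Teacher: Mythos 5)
Your proposal is correct and follows essentially the same route as the paper: apply $\ad_{G_1}$ to the chain $[A_{-1},A_0]=[A_0,A_1]=[A_1,A_2]$ to obtain $q-p=r-q=s-r$, then supply one anchor identity $q=r$ via a Jacobi computation. The paper derives that anchor by applying $\ad_{A_1}\ad_{A_0}$ to $[A_1,A_0]$, whereas you substitute $A_{-1}=A_1-[G_1,A_0]$ and expand, but the two computations are equivalent (both reduce to $[A_1,[A_0,G_1]]=[A_0,[A_1,G_1]]$, using $[A_1,A_0]=2G_1$), and your observation that this step needs no Dolan--Grady relation is accurate.
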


\begin{proof} From (\ref{addRel3}) we have that \[ [A_{-1}, A_0] = [A_0,A_1] = [A_1,A_2]. \] Apply $\ad_{G_1}$ to each of the commutators to get
\begin{align*}
	[G_1,[A_{-1},A_0]] 	&= [[G_1,A_{-1}],A_0] + [A_{-1},[G_1,A_0]]  \tag*{by \eqref{JacobiId}} \\
											&= [A_0-A_{-2},A_0] + [A_{-1},A_1-A_{-1}]  \tag*{by (\ref{Am+1Def})} \\
											&= [A_0,A_{-2}] + [A_{-1},A_1], \\
											\\
	[G_1,[A_0,A_1]]	&= [[G_1,A_0],A_1] + [A_0,[G_1,A_1]]  \tag*{by \eqref{JacobiId}} \\
									&= [A_1-A_{-1},A_1] + [A_0,A_2-A_0]  \tag*{by (\ref{Am+1Def})} \\
									&= [A_1,A_{-1}] + [A_0,A_2], \\
									\\
	[G_1,[A_1,A_2]]	&= [[G_1,A_1],A_2] + [A_1,[G_1,A_2]]  \tag*{by \eqref{JacobiId}} \\
									&= [A_2-A_0,A_2] + [A_1,A_3-A_1]  \tag*{by (\ref{Am+1Def})} \\
									&= [A_2,A_0] + [A_1,A_3]. \\
\end{align*}
Hence it follows that
\begin{align}
	[A_0,A_{-2}] + [A_{-1},A_1] = [A_1,A_{-1}] + [A_0,A_2] = [A_2,A_0] + [A_1,A_3]. \label{3equality}
\end{align}
Next, we apply $\ad_{A_1}\ad_{A_0}$ on $[A_1,A_0]$ to get
\begin{align*}
	[A_1,[A_0,[A_1,A_0]]] &= [[A_1,A_0],[A_1,A_0]] + [A_0,[A_1,[A_1,A_0]]]  \tag*{by \eqref{JacobiId}} \\
	\Leftrightarrow 2[A_1,[A_0,G_1]] &= 2[A_0,[A_1,G_1]]  \tag*{by (\ref{G1Def})} \\
	\Leftrightarrow [A_1,[G_1,A_0]] &= [A_0,[G_1,A_1]] \\
	\Leftrightarrow [A_1,A_1-A_{-1}] &= [A_0,A_2-A_0]  \tag*{by (\ref{Am+1Def})} \\
	\Leftrightarrow [A_{-1},A_1] &= [A_0,A_2] \\
	\Leftrightarrow [A_1,A_{-1}] + [A_0,A_2] &= 0. \\
\end{align*}
From (\ref{3equality}) we then have that
\[ [A_0,A_{-2}] + [A_{-1},A_1] = [A_1,A_{-1}] + [A_0,A_2] = [A_2,A_0] + [A_1,A_3] = 0. \]
Hence we can conclude the desired result, \[[A_{-2},A_0] = [A_{-1},A_1] = [A_0,A_2] = [A_1,A_3].\]
\end{proof}

\begin{cor} \label{Rel1Rel2BC} If $A_0$ and $A_1$ satisfy the Dolan-Grady relations (\ref{DG1}) and (\ref{DG2}), then (\ref{addRel1})$_n$ and (\ref{addRel2})$_n$ are satisfied for $n \in \{0,1,2\}$.
\end{cor}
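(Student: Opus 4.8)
The plan is to deduce all the relations (\ref{addRel1})$_n$, (\ref{addRel2})$_n$ for $n\in\{0,1,2\}$ from the handful of commutator identities already established, using Lemma \ref{addLemma} as the bridge between the two families. I would start by isolating the cases that need nothing beyond the definitions (\ref{G1Def}), (\ref{Am+1Def}), (\ref{GmDef}): the computation $A_1-A_{-1}=[G_1,A_0]=\frac{1}{2}[[A_1,A_0],A_0]=-\frac{1}{2}[A_0,[A_1,A_0]]$ gives (\ref{addRel1})$_0$, and $A_2-A_0=[G_1,A_1]=\frac{1}{2}[[A_1,A_0],A_1]=\frac{1}{2}[A_1,[A_0,A_1]]$ gives (\ref{addRel2})$_1$; both are exactly the base-case derivations from the proof of Proposition \ref{addRelations}, and they use only (\ref{Am+1Def}) and (\ref{G1Def}), never relation (\ref{Onsrel1}). (The relation (\ref{addRel2})$_0$, if one includes it, is just $A_1-A_1=\frac{1}{2}[A_1,[A_0,A_0]]=0$.) So (\ref{addRel1})$_0$, (\ref{addRel2})$_0$ and (\ref{addRel2})$_1$ hold unconditionally.

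Next I would invoke the two commutator identities supplied by the Dolan-Grady hypothesis: (\ref{addRel3}), namely $[A_{-1},A_0]=[A_0,A_1]=[A_1,A_2]$, and (\ref{addRel4}), namely $[A_{-2},A_0]=[A_{-1},A_1]=[A_0,A_2]=[A_1,A_3]$, both proven in the preceding lemmas from (\ref{DG1}) and (\ref{DG2}). The point is that these equalities are precisely the hypotheses needed to apply Lemma \ref{addLemma} at the small indices. For $n=1$: the identity $[A_0,A_1]=[A_{-1},A_0]$ from (\ref{addRel3}) is the hypothesis of Lemma \ref{addLemma}(i) with $n=1$, hence (\ref{addRel1})$_1\Leftrightarrow$(\ref{addRel2})$_1$, and since (\ref{addRel2})$_1$ is in hand we get (\ref{addRel1})$_1$. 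For $n=2$: the identity $[A_1,A_2]=[A_0,A_1]$ from (\ref{addRel3}) is the hypothesis of Lemma \ref{addLemma}(ii) with $n=1$, giving (\ref{addRel1})$_0\Leftrightarrow$(\ref{addRel2})$_2$; since (\ref{addRel1})$_0$ holds we obtain (\ref{addRel2})$_2$. Finally the identity $[A_0,A_2]=[A_{-2},A_0]$ from (\ref{addRel4}) is the hypothesis of Lemma \ref{addLemma}(i) with $n=2$, giving (\ref{addRel1})$_2\Leftrightarrow$(\ref{addRel2})$_2$, whence (\ref{addRel1})$_2$. This exhausts all six relations.

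There is no real obstacle here: the corollary is essentially bookkeeping, since all the substantive work already sits in Lemma \ref{addLemma} and in the two Dolan-Grady commutator lemmas. The only point meriting a moment's care is the observation that the derivations of (\ref{addRel1})$_0$ and (\ref{addRel2})$_1$ are genuinely unconditional — in the proof of Proposition \ref{addRelations} they are written under the standing assumption of (\ref{Onsrel1}), but that assumption is not used in those two particular computations, so they are legitimately available in the present setting, where (\ref{Onsrel1}) has not yet been proven. To avoid quoting from inside another proof, one can simply rewrite those two short chains of equalities directly in the proof of the corollary.
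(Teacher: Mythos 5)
Your proposal is correct and follows essentially the same route as the paper: direct computation of (\ref{addRel1})$_0$ and (\ref{addRel2})$_1$ from (\ref{G1Def}) and (\ref{Am+1Def}), then Lemma \ref{addLemma}(i) with the equalities from (\ref{addRel3}) and (\ref{addRel4}) to get (\ref{addRel1})$_1$ and (\ref{addRel1})$_2$, and Lemma \ref{addLemma}(ii) with $[A_1,A_2]=[A_0,A_1]$ to get (\ref{addRel2})$_2$. The only cosmetic difference is that you dispose of (\ref{addRel2})$_0$ by the trivial identity $\frac{1}{2}[A_1,[A_0,A_0]]=0$ whereas the paper funnels it through Lemma \ref{addLemma}(i) with the vacuous hypothesis $[A_0,A_0]=[A_0,A_0]$; your remark that the base-case computations in Proposition \ref{addRelations} never actually use (\ref{Onsrel1}) is exactly the point that makes the argument legitimate here.
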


\begin{proof} We begin by showing that (\ref{addRel1})$_0$ is satisfied as follows:
\begin{align*}
	-\frac{1}{2}[A_0,[A_1,A_0]] &= -\frac{1}{2}[[A_0,A_1],A_0] 	\tag*{by \eqref{JacobiId}} \\
															&= [G_1,A_0] 										\tag*{by (\ref{G1Def})} \\
															&= A_1 - A_{-1}									\tag*{by (\ref{Am+1Def}).}
\end{align*}
Note that we clearly have $[A_0,A_0] = 0 = [A_0,A_0]$. Thus by Lemma \ref{addLemma}(i) we have that (\ref{addRel2})$_0$ holds. Similarly, since from (\ref{addRel3}) we have $[A_1,A_2] = [A_0,A_1]$, it follows from Lemma \ref{addLemma}(ii) that (\ref{addRel2})$_2$ is also satisfied. Furthermore, from (\ref{addRel4}) we have that $[A_0,A_2] = [A_{-2},A_0]$. Hence Lemma \ref{addLemma} tells us that (\ref{addRel1})$_2$ is satisfied. Next we show that (\ref{addRel2})$_1$ is satisfied:
\begin{align*}
	\frac{1}{2}[A_1,[A_0,A_1]]	&= \frac{1}{2}[[A_1,A_0],A_1]	\tag*{by \eqref{JacobiId}} \\
															&= [G_1,A_1]									\tag*{by (\ref{G1Def})} \\
															&= A_2 - A_0									\tag*{by (\ref{Am+1Def}).}
\end{align*}
Since (\ref{addRel3}) tells us that $[A_0,A_1] = [A_{-1},A_0]$, it follows from Lemma \ref{addLemma}(i) that (\ref{addRel1})$_1$ is also satisfied. 
\end{proof}

\begin{lem} \emph{(\cite[Lemma 4]{RoanS91}, \cite[Lemma 5]{DaviesB91})} If $A_0$ and $A_1$ satisfy the Dolan-Grady relations (\ref{DG1}) and (\ref{DG2}), then
\begin{align}
    [A_{-3},A_0] = [A_{-2},A_1] = [A_{-1},A_2] = [A_0,A_3] = [A_1,A_4]. \label{addRel5}
\end{align}
\end{lem}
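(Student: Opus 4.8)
The plan is to imitate the proof of the preceding lemma (the one giving \eqref{addRel4}), but to organise the bookkeeping so that the two ``boundary'' brackets $[A_{-2},A_{-1}]$ and $[A_2,A_3]$ — which, unlike in that proof, do genuinely intervene — get eliminated at the end. Throughout I work with $A_m,G_m$ as defined by \eqref{G1Def}, \eqref{Am+1Def}, \eqref{GmDef}, and I use freely \eqref{addRel3}, \eqref{addRel4}, the identity $[A_1,A_{-1}]=-[A_0,A_2]$ produced inside the proof of \eqref{addRel4}, and the relations \eqref{addRel1}$_n$, \eqref{addRel2}$_n$ for $n\le2$ supplied by Corollary~\ref{Rel1Rel2BC}.

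\textbf{Step 1 (push $\ad_{G_1}$ through \eqref{addRel4}).} Since \eqref{addRel4} says the four brackets $[A_{-2},A_0]$, $[A_{-1},A_1]$, $[A_0,A_2]$, $[A_1,A_3]$ agree, so do their images under $\ad_{G_1}$. Expanding each image with \eqref{JacobiId} and \eqref{Am+1Def} (so that $[G_1,A_m]=A_{m+1}-A_{m-1}$), and cancelling the difference-one brackets that appear via \eqref{addRel3}, the equality of the two \emph{middle} images gives $2[A_{-1},A_2]=[A_{-2},A_1]+[A_0,A_3]$, while the two \emph{outer} images additionally express $[A_{-2},A_{-1}]$ and $[A_2,A_3]$ linearly in terms of $[A_{-3},A_0]$, $[A_0,A_3]$ and $[A_1,A_4]$; I keep these for later.

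\textbf{Step 2 (the middle three brackets; $[G_1,G_2]=0$).} First, Corollary~\ref{Rel1Rel2BC} (giving \eqref{addRel1}$_1$ and \eqref{addRel2}$_2$) together with $[A_1,A_{-1}]=-[A_0,A_2]$ yields $[G_2,A_0]=A_2-A_{-2}$ and $[G_2,A_1]=A_3-A_{-1}$. Now compute the element $[G_1,G_2]$ two ways. Expanding $G_2=\tfrac12[A_2,A_0]$ and moving $\ad_{G_1}$ onto $[A_2,A_0]$ gives $[G_1,G_2]=\tfrac12\bigl([A_{-1},A_2]-[A_0,A_3]\bigr)$; expanding instead $G_1=\tfrac12[A_1,A_0]$ and moving $\ad_{G_2}$ onto $[A_1,A_0]$ (using the two formulas just recorded) gives $[G_1,G_2]=\tfrac12\bigl([A_0,A_3]-[A_{-2},A_1]\bigr)$. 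Combining the resulting identity with Step~1 forces $[A_{-2},A_1]=[A_{-1},A_2]=[A_0,A_3]$, and in particular $[G_1,G_2]=0$.

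\textbf{Step 3 (the two extreme brackets).} With $[G_1,G_2]=0$ in hand, the recursion \eqref{Am+1Def} propagates the formulas of Step~2 to $[G_2,A_{-1}]=A_1-A_{-3}$ and $[G_2,A_2]=A_4-A_0$. Since $[A_{-1},A_0]=[A_1,A_2]=-2G_1$ by \eqref{addRel3} and \eqref{G1Def}, we get $[G_2,[A_{-1},A_0]]=[G_2,[A_1,A_2]]=0$; expanding these two vanishing brackets by \eqref{JacobiId} with the four $[G_2,A_i]$-formulas, and substituting the values of $[A_{-2},A_{-1}]$ and $[A_2,A_3]$ from the outer images of Step~1, I obtain $[A_{-3},A_0]=[A_0,A_3]$ and $[A_1,A_4]=[A_0,A_3]$, which closes the chain. (Alternatively, once one extreme equality is proved, the other follows from the reflection $A_m\mapsto A_{1-m}$, which is an automorphism of $\mathcal{OA}$ because interchanging $A_0,A_1$ swaps \eqref{DG1} and \eqref{DG2}.)

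\textbf{Main obstacle.} The recursion \eqref{Am+1Def} on its own only produces tautological identities for the boundary brackets $[A_{-2},A_{-1}],[A_2,A_3]$ and the extreme brackets $[A_{-3},A_0],[A_1,A_4]$; the crux is therefore the observation in Step~2 that $[G_1,G_2]=0$, which is what licenses the propagation of $\ad_{G_2}$ to $A_{-1}$ and $A_2$ in Step~3. Everything else is routine manipulation with the Jacobi identity, though the sheer number of brackets to track is why this lemma is the most laborious link in the chain so far.
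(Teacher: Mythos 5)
Your proof is correct, but it follows a genuinely different route from the one in the thesis, and it is worth recording how. The first middle equation $2[A_{-1},A_2]=[A_{-2},A_1]+[A_0,A_3]$ is obtained identically in both arguments (apply $\ad_{G_1}$ to $[A_{-1},A_1]=[A_0,A_2]$). For the second equation, however, the thesis applies $\ad_{A_0}\ad_{A_1}$ to $[A_{-2},A_0]=[A_{-1},A_1]$ and grinds through a long Jacobi expansion to reach $2[A_{-2},A_1]=[A_0,A_3]+[A_{-1},A_2]$, whereas you compute $[G_1,G_2]$ in two ways using $[G_2,A_0]=A_2-A_{-2}$ and $[G_2,A_1]=A_3-A_{-1}$ (both legitimately available from Corollary \ref{Rel1Rel2BC} together with the antisymmetric form of \eqref{addRel4} and the definition \eqref{GmDef}); this is shorter and has the bonus of delivering $[G_1,G_2]=0$ explicitly. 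The treatment of the two extreme brackets diverges even more: the thesis never introduces $G_2$, instead establishing $[G_1,[A_{-1},A_1]]=0$ and $[G_1,[A_0,A_2]]=0$ and then showing by a double Jacobi computation that $[A_0,[G_1,[A_0,[A_{-1},A_1]]]]$ and $[A_1,[G_1,[A_1,[A_0,A_2]]]]$ each equal their own negatives, hence vanish, before re-expanding them to extract $[A_{-3},A_0]=[A_0,A_3]$ and $[A_1,A_4]=[A_{-2},A_1]$. You instead exploit $[G_1,G_2]=0$ to propagate the $\ad_{G_2}$ formulas to $A_{-1}$ and $A_2$ and then expand the trivially vanishing brackets $[G_2,[A_{-1},A_0]]$ and $[G_2,[A_1,A_2]]$, feeding in the boundary brackets $[A_{-2},A_{-1}]$, $[A_2,A_3]$ from your Step 1; I checked that this closes correctly. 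Your observation that $[G_1,G_2]=0$ is the conceptual crux is apt -- it is essentially an early instance of relation \eqref{Onsrel3} -- and your reflection $A_m\mapsto A_{1-m}$ (induced by swapping $A\leftrightarrow B$, which exchanges \eqref{DG1} and \eqref{DG2} and sends $G_1\mapsto -G_1$) is a valid automorphism of $\mathcal{OA}$ that would halve the work in Step 3; the thesis does not use this symmetry and instead repeats the computation verbatim on the other side. The trade-off is that your route requires establishing the auxiliary $\ad_{G_2}$ formulas up front, while the thesis stays entirely within iterated brackets of $A_0$, $A_1$, $G_1$ at the cost of considerably longer computations.
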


\begin{proof} We begin by showing that $[A_{-2},A_1] = [A_{-1},A_2] = [A_0,A_3]$. To do so we will need to develop two equations by using (\ref{addRel4}). First of all, from (\ref{addRel4}) we have that $[A_{-1},A_1]=[A_0,A_2]$. To it we apply $\ad_{G_1}$ to get
\begin{align*}
&\quad [G_1,[A_{-1},A_1]] = [G_1,[A_0,A_2]] \\
&\Leftrightarrow [[G_1,A_{-1}],A_1] + [A_{-1},[G_1,A_1]] = [[G_1,A_0],A_2] + [A_0,[G_1,A_2]] \tag*{by \eqref{JacobiId}} \\
&\Leftrightarrow [A_0-A_{-2},A_1] + [A_{-1},A_2-A_0] = [A_1-A_{-1},A_2] + [A_0,A_3-A_1] \tag*{by (\ref{Am+1Def})}\\
&\Leftrightarrow 2[A_0,A_1] - [A_{-2},A_1] + 2[A_{-1},A_2] - [A_{-1},A_0] = [A_1,A_2] + [A_0,A_3] \\
&\Leftrightarrow 2[A_0,A_1] - [A_{-2},A_1] + 2[A_{-1},A_2] - [A_0,A_1] = [A_0,A_1] + [A_0,A_3] \tag*{by (\ref{addRel3}).}
\end{align*}
Which is equivalent to our first desired equation
\begin{align}
	2[A_{-1},A_2] = [A_{-2},A_1] + [A_0,A_3]. \label{equ1}
\end{align}
Next, consider the relation $[A_{-2},A_0]=[A_{-1},A_1]$ (from (\ref{addRel4})) and apply $\ad_{A_0}\ad_{A_1}$:
\begin{align}
	[A_0,[A_1,[A_{-2},A_0]]] = [A_0,[A_1,[A_{-1},A_1]]]. \label{adA0A1}
\end{align}
For computational simplicity let us consider each side of (\ref{adA0A1}) separately and then look at the equality. 
\begin{align*}
\text{LHS of }&\text{(\ref{adA0A1})}	= [A_0,[A_1,[A_{-2},A_0]]] \\
							&= [A_0,[[A_1,A_{-2}],A_0] + [A_{-2},[A_1,A_0]]] \tag*{by \eqref{JacobiId}}\\
							&= [A_0,-[A_0,[A_1,A_{-2}]] + 2[A_{-2},G_1]] \tag*{by (\ref{G1Def})}\\
							&= [A_0,2(A_3-A_{-3}) - 2[G_1,A_{-2}]] \tag*{by (\ref{addRel1})$_2$}\\
							&= [A_0,2A_3 - 2A_{-3} - 2(A_{-1}-A_{-3})] \tag*{by (\ref{Am+1Def})}\\
							&= 2[A_0,A_3-A_{-1}] \\
							&= 2[A_0,A_3] + 2[A_{-1},A_0] \\
							&= 2[A_0,A_3] + 2[A_0,A_1] \tag*{by (\ref{addRel3}),}
\end{align*}
\begin{align*}
\text{RHS of }&\text{(\ref{adA0A1})}	= [A_0,[A_1,[A_{-1},A_1]]] \\
							&= [A_0,[[A_1,A_{-1}],A_1]]  \tag*{by \eqref{JacobiId}}\\
							&= [[A_0,[A_1,A_{-1}]],A_1] + [[A_1,A_{-1}],[A_0,A_1]]  \tag*{by \eqref{JacobiId}}\\
							&= -2[A_2-A_{-2},A_1]+ 2[G_1,[A_1,A_{-1}]]  \tag*{by (\ref{addRel1})$_1$ and (\ref{G1Def})}\\
							&= 2[A_1,A_2] + 2[A_{-2},A_1] + 2[[G_1,A_1],A_{-1}] + 2[A_1,[G_1,A_{-1}]]  \tag*{by \eqref{JacobiId}}\\
							&= 2[A_1,A_2] + 2[A_{-2},A_1] + 2[A_2-A_0,A_{-1}] + 2[A_1,A_0-A_{-2}]  \tag*{by (\ref{Am+1Def})}\\
							&= 2[A_1,A_2] + 2[A_{-2},A_1] + 2[A_2,A_{-1}] + 2[A_{-1},A_0] + 2[A_1,A_0] + 2[A_{-2},A_1] \\
							&= 2[A_0,A_1] + 4[A_{-2},A_1] + 2[A_2,A_{-1}] + 2[A_0,A_1] - 2[A_0,A_1]  \tag*{by (\ref{addRel3})}\\
							&= 2[A_0,A_1] + 4[A_{-2},A_1] + 2[A_2,A_{-1}].
\end{align*}
Next we consider the equality of LHS and RHS of (\ref{adA0A1}) 
\[ 2[A_0,A_3] + 2[A_0,A_1] = 2[A_0,A_1] + 4[A_{-2},A_1] + 2[A_2,A_{-1}], \]
which is equivalent to our second desired equation
\begin{align}
	2[A_{-2},A_1] = [A_0,A_3] + [A_{-1},A_2]. \label{equ2}
\end{align}
Next we combine (\ref{equ1}) and (\ref{equ2}) by utilizing their common term $[A_0,A_3]$.
\begin{align*}
	2[A_{-1},A_2] - [A_{-2},A_1] &= 2[A_{-2},A_1] - [A_{-1},A_2], \\
	\Longleftrightarrow [A_{-1},A_2] &= [A_{-2},A_1].
\end{align*}
So we obtain one of the equalities of (\ref{addRel5}). Now if we replace this result in (\ref{equ1}) we get
\begin{gather*}
	2[A_{-2},A_1] = [A_{-2},A_1] + [A_0,A_3] \\
\Longleftrightarrow [A_{-2},A_1] = [A_0,A_3].
\end{gather*}
Hence, so far we have succeeded in proving that $[A_{-2},A_1] = [A_{-1},A_2] = [A_0,A_3]$.

Next, we aim to prove that $[A_0,A_3]=[A_{-3},A_0]$. To do so first we note that
\begin{align*}
[G_1,[A_{-1},&A_1]] = [[G_1,A_{-1}],A_1] + [A_{-1},[G_1,A_1]] \tag*{by \eqref{JacobiId}}\\
							&= [A_0-A_{-2},A_1] + [A_{-1},A_2-A_0] \tag*{by (\ref{Am+1Def})}\\
							&= [A_0,A_1] - [A_{-2},A_1] + [A_{-1},A_2] - [A_{-1},A_0] \\
							&= [A_0,A_1] - [A_{-2},A_1] + [A_{-2},A_1] - [A_0,A_1] \tag*{by (\ref{addRel3}) \& above.}
\end{align*}
So,
\begin{align}
	[G_1,[A_{-1},A_1]] = 0. \label{G1A-1A1}
\end{align}
Furthermore, we note that
\begin{align*}
[A_0&,[G_1,[A_0,[A_{-1},A_1]]]] \\
={}& [[A_0,G_1],[A_0,[A_{-1},A_1]]] + [G_1,[A_0,[A_0,[A_{-1},A_1]]]] \tag*{by \eqref{JacobiId}}\\
={}& [[[A_0,G_1],A_0],[A_{-1},A_1]] + [A_0,[[A_0,G_1],[A_{-1},A_1]]] \\
 & - [G_1,[A_0,[A_0,[A_1,A_{-1}]]]] \tag*{by \eqref{JacobiId}}\\
={}& -[[A_0,[A_0,G_1]],[A_{-1},A_1]] + [A_0,[[A_0,G_1],[A_{-1},A_1]]]\\
 & + 2[G_1,[A_0,A_2-A_{-2}]] \tag*{by (\ref{addRel1})$_1$}\\
={}& \frac{1}{2}[[A_0,[A_0,[A_0,A_1]]],[A_{-1},A_1]] + [A_0,[G_1,[A_0,[A_1,A_{-1}]]]] \\
 & + [A_0,[A_0,[G_1,[A_{-1},A_1]]]] + 2[G_1,[A_0,A_2] + [A_{-2},A_0]] \tag*{by (\ref{G1Def})}\\
={}& 2[[A_0,A_1],[A_{-1},A_1]] - [A_0,[G_1,[A_0,[A_{-1},A_1]]]] \\
 & + 4[G_1,[A_{-1},A_1]] \tag*{by (\ref{DG1}), (\ref{addRel4}), (\ref{G1A-1A1})}\\
={}& -4[G_1,[A_{-1},A_1]] - [A_0,[G_1,[A_0,[A_{-1},A_1]]]] \tag*{by (\ref{G1Def}),(\ref{G1A-1A1})}\\
={}& - [A_0,[G_1,[A_0,[A_{-1},A_1]]]] \tag*{by (\ref{G1A-1A1}).}
\end{align*}
Hence, we have that $[A_0,[G_1,[A_0,[A_{-1},A_1]]]] = 0$. But we note that
\begin{align*}
0 = [A_0,[G_1,[A_0,[A_{-1}&,A_1]]]]	= -[A_0,[G_1,[A_0,[A_1,A_{-1}]]]] \\
									&= 2[A_0,[G_1,A_2-A_{-2}]] \tag*{by (\ref{addRel1})$_1$}\\
									&= 2[A_0,[G_1,A_2]] - 2[A_0,[G_1,A_{-2}]] \\
									&= 2[A_0,A_3-A_1] - 2[A_0,A_{-1}-A_{-3}] \tag*{by (\ref{Am+1Def})}\\
									&= 2[A_0,A_3] - 2[A_0,A_1] + 2[A_{-1},A_0] - 2[A_{-3},A_0]\\
									&= 2[A_0,A_3] - 2[A_0,A_1] + 2[A_0,A_1] - 2[A_{-3},A_0] \tag*{by (\ref{addRel3})}\\
									&= 2[A_0,A_3] - 2[A_{-3},A_0].
\end{align*}
So we have that
\begin{align*}
	2[A_0,A_3] - 2[A_{-3},A_0] = 0 \\
	\Longleftrightarrow [A_0,A_3] = [A_{-3},A_0].
\end{align*}
To sum up, so far we have that $[A_{-3},A_0]=[A_{-2},A_1]=[A_{-1},A_2]=[A_0,A_3]$. Finally, we seek to prove that $[A_1,A_4]=[A_{-2},A_1]$ in order to complete the proof of our lemma. We proceed by a similar argument used above to get that $[A_0,A_3] = [A_{-3},A_0]$. So first we note that
\begin{align*}
[G_1,[A_0,A_2]]	&= [[G_1,A_0],A_2] + [A_0,[G_1,A_2]] \tag*{by \eqref{JacobiId}}\\
								&= [A_1-A_{-1},A_2] + [A_0,A_3-A_1] \tag*{by (\ref{Am+1Def})}\\
								&= [A_1,A_2] - [A_{-1},A_2] + [A_0,A_3] - [A_0,A_1]\\
								&= [A_0,A_1] - [A_0,A_3] + [A_0,A_3] - [A_0,A_1] \tag*{by (\ref{addRel3}) \& above.}
\end{align*}
So, 
\begin{align}
	[G_1,[A_0,A_2]] = 0. \label{G1A0A2}
\end{align}
Furthermore, we note that 
\begin{align*}
[A_1,&[G_1,[A_1,[A_0,A_2]]]] \\
={}& [[A_1,G_1],[A_1,[A_0,A_2]]]+[G_1,[A_1,[A_1,[A_0,A_2]]]] \tag*{by \eqref{JacobiId}}\\
={}& -[[A_1,[A_1,G_1]],[A_0,A_2]] + [A_1,[[A_1,G_1],[A_0,A_2]]] \\
 & + 2[G_1,[A_1,A_3-A_{-1}]] \tag*{by \eqref{JacobiId}, (\ref{addRel2})$_2$}\\
={}& -\frac{1}{2}[[A_1,[A_1,[A_1,A_0]]],[A_0,A_2]] + [A_1,[A_1,[G_1,[A_0,A_2]]]] \\
 & - [A_1,[G_1,[A_1,[A_0,A_2]]]] + 2[G_1,[A_1,A_3]-[A_1,A_{-1}]] \tag*{by \eqref{JacobiId}, (\ref{G1Def})}\\
={}& -2[[A_1,A_0],[A_0,A_2]] - [A_1,[G_1,[A_1,[A_0,A_2]]]] \\
 & + 2[G_1,[A_0,A_2] + [A_0,A_2]] \tag*{by (\ref{DG2}), (\ref{G1A0A2}), (\ref{addRel4})}\\
={}& -[G_1,[A_0,A_2]] - [A_1,[G_1,[A_1,[A_0,A_2]]]] + 4[G_1,[A_0,A_2]] \tag*{by (\ref{G1Def})}\\
={}& -[A_1,[G_1,[A_1,[A_0,A_2]]]] \tag*{by (\ref{G1A0A2}).}
\end{align*}
Hence, we have that $[A_1,[G_1,[A_1,[A_0,A_2]]]] = 0$. But we note that
\begin{align*}
0 = [A_1,[G_1,[A_1,[A_0,A_2]]]] &= 2[A_1,[G_1,A_3-A_{-1}]] \tag*{by (\ref{addRel2})$_2$}\\
								&= 2[A_1,[G_1,A_3]] - 2[A_1,[G_1,A_{-1}]] \\
								&= 2[A_1,A_4-A_2] - 2[A_1,A_0-A_{-2}] \tag*{by (\ref{Am+1Def})}\\
								&= 2[A_1,A_4] - 2[A_1,A_2] - 2[A_1,A_0] + 2[A_1,A_{-2}]\\
								&= 2[A_1,A_4] + 2[A_1,A_{-2}] \tag*{by (\ref{addRel3}).}
\end{align*}
So we have that
\begin{align*}
	2[A_1,A_4] + 2[A_1,A_{-2}] = 0 \\
	\Longleftrightarrow [A_1,A_4] = [A_{-2},A_1].
\end{align*}
This completes the proof of our lemma.
\end{proof}

\begin{lem} \emph{(\cite[Lemma 5]{RoanS91} \cite[Theorem 1]{DaviesB91})} If $A_0$ and $A_1$ satisfy the Dolan-Grady relations (\ref{DG1}) and (\ref{DG2}), then every adjacent pair $A_m$, $A_{m+1}$ satisfies the relations
\begin{align}
    [A_m,[A_m,[A_m,A_{m+1}]]] = 4[A_m,A_{m+1}], \label{Onsager1adj} \\
    [A_{m+1},[A_{m+1},[A_{m+1},A_m]]] = 4[A_{m+1},A_m], \label{Onsager2adj}
\end{align}
and $[A_{m+1},A_m] = 2G_1$ for all $m$ (in particular, the commutators $[A_{m+1},A_m]$ are independent of $m$).
\end{lem}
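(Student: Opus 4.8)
The plan is to reduce the whole statement to the single assertion that $[A_{m+1},A_m]=2G_1$ for every $m\in\mathbb Z$, and then to prove that by induction on $|m|$, bootstrapping off the relations \eqref{addRel3}, \eqref{addRel4} and \eqref{addRel5} applied to ``shifted'' pairs.

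First I would record the easy observation that the Dolan-Grady relations \eqref{Onsager1adj} and \eqref{Onsager2adj} for a pair $(A_m,A_{m+1})$ are forced as soon as $[A_{j+1},A_j]=2G_1$ is known for $j\in\{m-1,m,m+1\}$: using \eqref{Am+1Def} (in the form $[G_1,A_j]=A_{j+1}-A_{j-1}$) together with these equalities one computes $(\ad_{A_m})^2A_{m+1}=2(A_{m+1}-A_{m-1})$ and $(\ad_{A_{m+1}})^2A_m=2(A_m-A_{m+2})$, and one further bracket then gives \eqref{Onsager1adj} and \eqref{Onsager2adj} respectively. So it is enough to prove $[A_{m+1},A_m]=2G_1$ for all $m$. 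Moreover negative indices come for free by symmetry: the sequence $B_m:=A_{1-m}$ satisfies \eqref{Am+1Def} relative to $B_0=A_1$, $B_1=A_0$ with $\tfrac12[B_1,B_0]=-G_1$, and $(B_0,B_1)$ again satisfies the Dolan-Grady relations, so it suffices to prove $[A_{m+1},A_m]=2G_1$ for $m\ge0$.

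For the induction, the base cases $m=0$ (which is \eqref{G1Def}) and $m=1$ (where $[A_2,A_1]=-[A_1,A_2]=-[A_0,A_1]=[A_1,A_0]=2G_1$ by \eqref{addRel3}) are immediate. For $m\ge2$ I would assume $[A_{j+1},A_j]=2G_1$ for $0\le j\le m$; by the observation above the pair $(A_{m-1},A_m)$ then satisfies the Dolan-Grady relations, and since $\tfrac12[A_m,A_{m-1}]=G_1$ its derived sequence is just $(A_{j+m-1})_{j\in\mathbb Z}$. Because the lemmas giving \eqref{addRel3}--\eqref{addRel5} hold for the sequence attached to \emph{any} Dolan-Grady pair, applying \eqref{addRel4} and \eqref{addRel5} at this shifted base yields in particular $[A_m,A_{m+2}]=[A_{m-1},A_{m+1}]$ and the equalities $[A_{m-2},A_{m+1}]=[A_{m-1},A_{m+2}]=[A_m,A_{m+3}]$. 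The key move is then to evaluate $[G_1,[A_m,A_{m+2}]]$ in two ways. Expanding directly with \eqref{Am+1Def} and the Jacobi identity, the terms $-[A_{m-1},A_{m+2}]+[A_m,A_{m+3}]$ cancel by \eqref{addRel5}, leaving $[A_{m+1},A_{m+2}]+2G_1$. Expanding instead after the substitution $[A_m,A_{m+2}]=[A_{m-1},A_{m+1}]$, the analogous length-three terms cancel again, and the two surviving brackets $[A_m,A_{m+1}]$ and $[A_{m-1},A_m]$ are both $-2G_1$ by the induction hypothesis, so the value is $0$. Comparing the two gives $[A_{m+1},A_{m+2}]=-2G_1$, i.e. $[A_{m+2},A_{m+1}]=2G_1$, which advances the induction.

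The main obstacle is this inductive step. The recursion \eqref{Am+1Def} alone only ever produces tautologies when one tries to pin a new commutator $[A_{m+2},A_{m+1}]$ to lower-index data, so the Dolan-Grady relations must genuinely be injected; here they enter through the legitimacy of invoking \eqref{addRel4} and \eqref{addRel5} at the shifted base $(A_{m-1},A_m)$, which rests on the first reduction. Recognizing $[G_1,[A_m,A_{m+2}]]$ as the right quantity to compute twice — one expansion exhibiting the unknown commutator, the other manifestly zero — is the crux; once that is seen, the remaining verifications are routine bracket manipulations, and the passage back to \eqref{Onsager1adj}, \eqref{Onsager2adj} is the elementary computation described in the second paragraph.
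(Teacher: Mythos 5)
Your proof is correct, and its engine is the same as the paper's: an induction whose step passes to the shifted pair $(A_{m-1},A_m)$, observes that (by the induction hypothesis $\tfrac12[A_m,A_{m-1}]=G_1$) the derived sequence of that pair is just the shift of the original one, and then uses the relations \eqref{addRel3}--\eqref{addRel5}, which are available for an arbitrary Dolan--Grady pair. The differences are organizational rather than conceptual, but they are worth recording. You reduce both cubic relations \eqref{Onsager1adj} and \eqref{Onsager2adj} once and for all to the single commutator identity $[A_{j+1},A_j]=2G_1$ (a two-line computation with \eqref{Am+1Def}, which I checked: knowing the identity at $j=m-1,m,m+1$ does force both cubics for the pair $(A_m,A_{m+1})$), whereas the paper verifies the cubic relations explicitly in its base cases $m=\pm 1$ and transports them in the inductive step. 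You dispose of negative $m$ by the symmetry $B_m=A_{1-m}$, whereas the paper runs a mirror-image induction. In the inductive step itself the paper simply invokes its already-proved case $m=1$ at the shifted base, while you redo the essential computation there directly: applying $\ad_{G_1}$ to the equality $[A_m,A_{m+2}]=[A_{m-1},A_{m+1}]$ coming from \eqref{addRel4} at the shifted base, and cancelling the long-range terms via \eqref{addRel5} at the shifted base; this is exactly the computation hidden inside the paper's case $m=1$, where $\ad_{G_1}$ is applied to $[A_0,A_2]=[A_1,A_3]$. Both versions of the cancellation check out, and both surviving brackets in your ``second expansion'' are $-2G_1$ by the induction hypothesis, so the comparison does yield $[A_{m+2},A_{m+1}]=2G_1$. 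Your arrangement is somewhat leaner, since the upfront reduction spares you the explicit verification of the cubic relations at each stage, at the cost of a slightly less self-contained inductive step.
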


\begin{proof} We proceed by induction on $m$. We first consider the base cases $m \in \{-1,0,1\}$. The case $m = 0$ is simply the assumed Dolan-Grady relations (\ref{DG1}) and (\ref{DG2}). Next, consider the case $m = 1$. From (\ref{addRel4}), we have that $[A_0,A_2] = [A_1,A_3]$. Apply $\ad_{G_1}$ to get that
\begin{align*}
    &[G_1,[A_0,A_2]] = [G_1,[A_1,A_3]] \\
    &\Leftrightarrow [[G_1,A_0],A_2] + [A_0,[G_1,A_2]] = [[G_1,A_1],A_3] + [A_1,[G_1,A_3]] \tag*{by \eqref{JacobiId}}\\
    &\Leftrightarrow [A_1 - A_{-1},A_2] + [A_0,A_3 - A_1] = [A_2 - A_0,A_3] + [A_1,A_4 - A_2] \tag*{by (\ref{Am+1Def})}\\
    &\Leftrightarrow 2[A_1,A_2] - [A_{-1},A_2] + 2[A_0,A_3] - [A_0,A_1] = [A_2,A_3] + [A_1,A_4] \\
    &\Leftrightarrow 2[A_0,A_1] + 2[A_0,A_3] - [A_0,A_1] = [A_2,A_3] + 2[A_0,A_3] \tag*{by (\ref{addRel3}), (\ref{addRel5})}\\
    &\Leftrightarrow [A_0,A_1] = [A_2,A_3].
\end{align*}
Hence we have the additional relation
\begin{align}
	[A_0,A_1] = [A_2,A_3] \label{addRel6}
\end{align}
with which we now have the tools needed to show that (\ref{Onsager1adj}) and (\ref{Onsager2adj}) are satisfied for $m = 1$.
\begin{align*}
    [A_1,[A_1,[A_1,A_2]]]   &= [A_1,[A_1,[A_0,A_1]]] \tag*{by (\ref{addRel3})} \\
                            &= -[A_1,[A_1,[A_1,A_0]]] \\
                            &= -4[A_1,A_0]  \tag*{by (\ref{DG2})} \\
                            &= 4[A_0,A_1] \\
                            &= 4[A_1,A_2]  \tag*{by (\ref{addRel3}),}\\
                            \\
    [A_2,[A_2,[A_2,A_1]]]   &= [A_2,[A_2,[A_1,A_0]]] \tag*{by (\ref{addRel3})} \\
                            &= \big[A_2,[[A_2,A_1],A_0] + [A_1,[A_2,A_0]]\big] \tag*{by \eqref{JacobiId}} \\
                            &= [A_2,[[A_1,A_0],A_0]] + [A_2,[A_1,[A_2,A_0]]] \tag*{by (\ref{addRel3})} \\
                            &= -[A_2,[A_0,[A_1,A_0]]] - [A_2,[A_1,[A_0,A_2]]] \\
                            &= 2[A_2,A_1 -A_{-1}] - 2[A_2,A_3-A_{-1}] \tag*{by Corollary \ref{Rel1Rel2BC},}\\
                            & \tag*{(\ref{addRel1})$_0$ and (\ref{addRel2})$_2$} \\
                            &= 2[A_2,A_1] - 2[A_2,A_{-1}] - 2[A_2,A_3] + 2[A_2,A_{-1}] \\
                            &= 2[A_2,A_1] - 2[A_0,A_1] \tag*{by (\ref{addRel6})} \\
                            &= 2[A_2,A_1] - 2[A_1,A_2] \tag*{by (\ref{addRel3})} \\
                            &= 4[A_2,A_1].
\end{align*}
Furthermore, note that
\begin{align*}
	4[A_2,A_1]	&= 4[A_1,A_0] \qquad \text{by (\ref{addRel3})}\\
							&= 8G_1				\qquad \qquad \text{by (\ref{G1Def})} \\
	\implies [A_2,A_1] &= 2G_1.
\end{align*}
Next, we consider the case $m = -1$. From (\ref{addRel4}), we have that $[A_{-2},A_0] = [A_{-1},A_1]$. Apply $\ad_{G_1}$ to get that
\begin{align*}
    &[G_1,[A_{-2},A_0]] = [G_1,[A_{-1},A_1]] \\
    &\Leftrightarrow [[G_1,A_{-2}],A_0] + [A_{-2},[G_1,A_0]] = [[G_1,A_{-1}],A_1] + [A_{-1},[G_1,A_1]] \\
    &\Leftrightarrow [A_{-1}-A_{-3},A_0] + [A_{-2},A_1-A_{-1}] = [A_0-A_{-2},A_1] + [A_{-1},A_2-A_0] \tag*{by (\ref{Am+1Def})} \\
    &\Leftrightarrow 2[A_{-1},A_0] - [A_{-3},A_0] + 2[A_{-2},A_1] - [A_{-2},A_{-1}] = [A_0,A_1] + [A_{-1},A_2] \\
    &\Leftrightarrow 2[A_0,A_1] + [A_0,A_3] - [A_{-2},A_{-1}] = [A_0,A_1] + [A_0,A_3] \tag*{by (\ref{addRel3}) and (\ref{addRel5})} \\
    &\Leftrightarrow [A_0,A_1] = [A_{-2},A_{-1}].
\end{align*}
Hence we have the additional relation
\begin{align}
	[A_0,A_1] = [A_{-2},A_{-1}] \label{addRel7}
\end{align}
with which we now have the tools needed to show that (\ref{Onsager1adj}) and (\ref{Onsager2adj}) are satisfied for $m = -1$.
\begin{align*}
  [A_0,[A_0,[A_0,A_{-1}]]]	&= [A_0,[A_0,[A_1,A_0]]] \tag*{by (\ref{addRel3})} \\
                            &= -[A_0,[A_0,[A_0,A_1]]] \\
                            &= -4[A_0,A_1]  \tag*{by (\ref{DG1})} \\
                            &= 4[A_1,A_0] \\
                            &= 4[A_0,A_{-1}] \tag*{by (\ref{addRel3}),}
\end{align*}
\begin{align*}                            
	[A_{-1},[A_{-1},[A_{-1},&A_0]]]	= [A_{-1},[A_{-1},[A_0,A_1]]] \tag*{by (\ref{addRel3})} \\
  				&= [A_{-1},[[A_{-1},A_0],A_1]] + [A_{-1},[A_0,[A_{-1},A_1]]] \tag*{by \eqref{JacobiId}} \\
          &= [A_{-1},[[A_0,A_1],A_1]] + [A_{-1},[A_0,[A_{-1},A_1]]] \tag*{by (\ref{addRel3})} \\
          &= -[A_{-1},[A_1,[A_0,A_1]]] - [A_{-1},[A_0,[A_1,A_{-1}]]] \\
          &= -2[A_{-1},A_2-A_0] + 2[A_{-1},A_2-A_{-2}] \tag*{by Corollary \ref{Rel1Rel2BC},}\\
          & \tag*{(\ref{addRel1})$_1$ and (\ref{addRel2})$_1$} \\
          &= -2[A_{-1},A_2] + 2[A_{-1},A_0] + 2[A_{-1},A_2] - 2[A_{-1},A_{-2}]\\
          &= 2[A_0,A_1] - 2[A_{-1},A_{-2}] \tag*{by (\ref{addRel3})}\\
          &= 2[A_0,A_1] - 2[A_1,A_0] \tag*{by (\ref{addRel7})} \\
          &= 4[A_0,A_1] \\
          &= 4[A_{-1},A_0] \tag*{by (\ref{addRel3}).}
\end{align*}
Furthermore, note that
\begin{align*}
	4[A_0,A_{-1}]	&= 4[A_1,A_0] \qquad \text{by (\ref{addRel3})}\\
								&= 8G_1				\qquad \qquad \text{by (\ref{G1Def})} \\
	\implies [A_0,A_{-1}] &= 2G_1
\end{align*}
as desired. In the general case, let us consider when $m$ is positive. We assume that (\ref{Onsager1adj}), (\ref{Onsager2adj}) and $[A_{n+1},A_n] = 2G_1$ are satisfied for $1 < n < m$ and show that they are also satisfied for $n = m$. To do so we define $A'_k$ ($k \in \mathbb{Z}$) and $G'_1$ as follows:
\begin{align*}
	A'_0 	&= A_{m-1}, \\
	A'_1	&= A_m, \\
	G'_1	&= \frac{1}{2}[A'_1,A'_0], \\
	A'_{k+1} &= A'_{k-1} + [G'_1,A'_k].
\end{align*}
By the induction hypothesis, the pair $A_{m-1}$, $A_m$ satisfies (\ref{Onsager1adj}), (\ref{Onsager2adj}) and $[A_m,A_{m-1}]$ $= 2G_1$. Hence, so does the pair $A'_0$, $A'_1$. Case $m=1$ then tells that the pair $A'_1$, $A'_2$ also satisfies (\ref{Onsager1adj}), (\ref{Onsager2adj}) and $[A'_2,A'_1] = 2G'_1$. But note that 
\begin{align*}
	G'_1 	&= \frac{1}{2}[A'_1,A'_0] 		& \text{by definition}\\
				&= \frac{1}{2}[A_m, A_{m-1}]	& \text{by definition} \\
				&= G_1												& \text{by the induction hypothesis,}\\
				\\
	A'_1	&= A_m												& \text{by definition,}\\
				\\
	A'_2	&= A'_0 + [G'_1,A'_1]					& \text{by definition} \\
				&= A_{m-1} + [G_1,A_m]				& \text{by definition and above} \\
				&= A_{m-1} + A_{m+1} - A_{m-1}	& \text{by (\ref{Am+1Def})} \\
				&= A_{m+1}.
\end{align*}
Therefore the pair $A_m$, $A_{m+1}$ satisfies (\ref{Onsager1adj}), (\ref{Onsager2adj}) and $[A_{m+1},A_m] = 2G_1$. By induction, this proves our lemma for the case of $m$ positive. The proof for $m$ negative is the same as for when it is positive but our induction hypothesis will be for $-1 > n > m = -k$ where $k$ is any positive integer.
\end{proof}

\begin{lem} \emph{(\cite[Lemma 6]{RoanS91} \cite[Theorem 2]{DaviesB91})} \label{IndiceDep} If $A_0$ and $A_1$ satisfy the Dolan-Grady relations (\ref{DG1}) and (\ref{DG2}), then for $N \in \mathbb{N}_+$ and all values $l$, $m$ satisfying $m = 0,...,N$, $m-l = N$, we have
\begin{align}
    [A_l,A_m] = [A_{l+1},A_{m+1}]
\end{align}
(i.e., the commutators $[A_l,A_m]$ depend only on the difference $l-m$ of the indices).
\end{lem}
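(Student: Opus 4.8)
The plan is to induct on $N$. The base case $N=1$ is contained in the preceding lemma, which gives $[A_{m+1},A_m]=2G_1$ for every $m$, and the cases $N=2$ and $N=3$ are precisely the relations \eqref{addRel4} and \eqref{addRel5} already established. So fix $N\geq 4$ and assume the statement for all smaller values; concretely, for each $N'<N$ the commutators $[A_l,A_m]$ with fixed difference $l-m=-N'$ are constant along the chain running from $[A_{-N'},A_0]$ to $[A_1,A_{N'+1}]$. The goal is to prove the same for the chain $[A_{-N},A_0]=[A_{-N+1},A_1]=\dots=[A_0,A_N]=[A_1,A_{N+1}]$.

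I would split this chain into its \emph{interior} links (those among the terms $[A_{-N+1},A_1],\dots,[A_0,A_N]$) and its two \emph{boundary} links, $[A_{-N},A_0]=[A_{-N+1},A_1]$ and $[A_0,A_N]=[A_1,A_{N+1}]$. For the interior, apply $\ad_{G_1}$ to each interior equality at level $N-1$, i.e. to $[A_l,A_m]=[A_{l+1},A_{m+1}]$ with $m-l=N-1$, and expand both sides by the Jacobi identity \eqref{JacobiId} using the recursion \eqref{Am+1Def}; since $[G_1,A_j]=A_{j+1}-A_{j-1}$, every term produced is a commutator $[A_p,A_q]$ with $q-p\in\{N-2,N\}$. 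The level-$(N-2)$ terms are constant by the induction hypothesis, so they cancel or merge, and one is left with a finite linear system relating the level-$N$ commutators $C_j:=[A_j,A_{j+N}]$; carrying out the level-$N$ analogue of the combination of \eqref{equ1} and \eqref{equ2} (the latter obtained instead by applying $\ad_{A_0}\ad_{A_1}$ to a level-$(N-1)$ relation and invoking the Dolan-Grady relations) should force the interior commutators $C_{-N+1}=\dots=C_0$ to coincide.

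For the two boundary links I would imitate the two dedicated computations in the proof of \eqref{addRel5} that produced $[A_0,A_3]=[A_{-3},A_0]$ and $[A_1,A_4]=[A_{-2},A_1]$. First one establishes the vanishing statements $[G_1,[A_{-1},A_{N-1}]]=0$ and $[G_1,[A_0,A_{N-1}]]=0$, which follow once the interior of levels $\leq N-1$ is known; then one pushes each of these through a cascade of Jacobi expansions that invokes the Dolan-Grady relations \eqref{DG1}, \eqref{DG2}, the definition \eqref{G1Def}, and the relations $(\ref{addRel1})_n$, $(\ref{addRel2})_n$ (available for small $n$ by Corollary \ref{Rel1Rel2BC}, and for larger $n$ by a short supplementary induction) until the nested brackets telescope into the desired two-term differences of $A$'s. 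A useful bookkeeping device here: by the preceding lemma each pair $(A_j,A_{j+1})$ satisfies the Dolan-Grady relations with the same $G_1$, so the universal property of $\mathcal{OA}$ yields homomorphisms $\phi_j$ with $\phi_j(A)=A_j$, $\phi_j(B)=A_{j+1}$, which satisfy $\phi_j(G_1)=G_1$ and $\phi_j(A_k)=A_{j+k}$, and which are automorphisms (with inverse $\phi_{-j}$). Since $\phi_1([A_l,A_m])=[A_{l+1},A_{m+1}]$, once a single link of the chain is proven, applying $\phi_1$ repeatedly propagates it along the whole chain; thus it suffices in fact to prove one boundary link, say $[A_{-N},A_0]=[A_{-N+1},A_1]$.

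I expect the boundary links to be the main obstacle. The interior step is mechanical, if bulky, linear algebra; the boundary step is where the Dolan-Grady relations are genuinely used, and the delicate point is to verify that the long chain of Jacobi identities which telescopes so conveniently for $N=3$ does so for general $N$, and that every instance of the induction hypothesis and of $(\ref{addRel1})_n$, $(\ref{addRel2})_n$ invoked along the way is actually available at that stage of the argument — i.e. that the index ranges line up.
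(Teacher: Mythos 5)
Your overall strategy coincides with the paper's: induct on $N$, apply $\ad_{G_1}$ to the level-$(N-1)$ equalities to obtain the second-order system $X_m-2X_{m+1}+X_{m+2}=0$ for $X_m:=[A_{m-N},A_m]$ (after the level-$(N-2)$ hypothesis cancels the cross terms), run a supplementary induction to make $(\ref{addRel1})_n$ and $(\ref{addRel2})_n$ available up to $n=N$, and then hunt for one additional relation via an $\ad_{A_0}\ad_{A_1}$-type computation. Your shift-automorphism observation is a genuinely nice extra (the paper does not use it): since each adjacent pair satisfies the Dolan--Grady relations with the same $G_1$, the universal property of $\mathcal{OA}$ does give endomorphisms $\phi_j$ with $\phi_j(A_k)=A_{j+k}$, $\phi_j(G_1)=G_1$, inverse $\phi_{-j}$, so a single proven link propagates along the entire chain. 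However, the decisive step is missing. The $\ad_{G_1}$ system only forces the $X_m$ to form an arithmetic progression; everything hinges on one further independent equation, and you dispose of it with ``the level-$N$ analogue of the combination of \eqref{equ1} and \eqref{equ2} should force the interior commutators to coincide.'' That analogue is the technical heart of the proof and does not generalize mechanically from $N=3$: the paper obtains it by applying $\ad_{A_1}\ad_{A_0}$ (resp.\ $\ad_{A_0}\ad_{A_1}$) to a level-$(N-1)$ relation tying position $1$ of the chain to its \emph{middle} --- $[A_{-2m+2},A_1]=[A_{-m+1},A_m]$ when $N=2m$, and the pair $[A_{-2m},A_0]=[A_{-m},A_m]$, $[A_{-2m+1},A_1]=[A_{-m},A_m]$ when $N=2m+1$ --- yielding $X_1=X_{N/2+1}$ in the even case and $X_N-X_1-X_{(N-1)/2}+X_{(N+1)/2}=0$ in the odd case. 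The derivation genuinely splits by the parity of $N$, a feature your sketch never confronts, and without it neither the interior nor any boundary link is established.

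A second, related problem: your proposed vanishing statements are not available where you want to use them. For instance, $[G_1,[A_0,A_{N-1}]]=[A_1,A_{N-1}]-[A_{-1},A_{N-1}]+[A_0,A_N]-[A_0,A_{N-2}]=X_N-X_{N-1}$ once the level-$(N-2)$ terms cancel by induction, so it vanishes precisely when the level-$N$ interior equality $X_{N-1}=X_N$ \emph{already} holds; this is exactly why the paper proves \eqref{G1A-1A1} only after establishing the interior of \eqref{addRel5}. Hence the boundary computation cannot supply the missing equation --- it presupposes it. Conversely, once the parity-split extra equation is in hand, all of $X_0=\dots=X_{N+1}$ follow at once (and, given your $\phi_1$, even a single equality would suffice), so the separate interior/boundary dichotomy you build the argument around is both circular as ordered and redundant once repaired. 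To complete the proof you must actually carry out the even/odd computations producing the extra constraint, checking along the way that the instances of $(\ref{addRel1})_n$, $(\ref{addRel2})_n$ and of the induction hypothesis you invoke (at levels $N-1$ and $N-2$) are available --- which is precisely the bookkeeping the paper's proof spends most of its length on.
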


\begin{proof} We proceed by induction on $N$. 

\textit{Base Cases:} It is simply a matter of noting that case $N=1$ follows from (\ref{addRel3}), case $N=2$ follows from (\ref{addRel4}) and case $N=3$ follows from (\ref{addRel5}).

\textit{Induction Hypothesis (I.H.):} Assume that for $3 \leq M \leq N-1$ and for all $l$ and $m$ such that $m=0,...,M$ and $m-l=M$, we have that $[A_l,A_m]=[A_{l+1},A_{m+1}]$.

Before proceeding to the inductive step, we will show that here the relations
\begin{align*}
    A_{n+1} - A_{-n-1} &= -\frac{1}{2}[A_0,[A_1,A_{-n}]], & n \geq 0, \qquad \qquad (\ref{addRel1}) \\
    A_{n+1} - A_{-n+1} &= \frac{1}{2}[A_1,[A_0,A_n]], & n \geq 1, \qquad \qquad (\ref{addRel2})
\end{align*}
are satisfied as we will need to use these properties in our proof below. To do so, we again proceed by induction. The base cases ($n = 0,1,2$) are true in general, as proven in Corollary \ref{Rel1Rel2BC}. Next, we assume (\ref{addRel1})$_n$ and (\ref{addRel2})$_n$ are satisfied for $2 \leq n \leq N-1$ and proceed to prove the case $n = N$. We begin by noting that the induction hypothesis for our first induction tells us that for $1 \leq n \leq N-1$, 
\begin{align*}
	[A_{-n},A_0]	&= [A_{1-n},A_1]	&\text{by (I.H.) for $l=-n$, $m=0$} \\
								&= [A_{2-n},A_2]	&\text{by (I.H.) for $l=1-n$, $m=1$} \\
								&\qquad ~ \vdots 	\\
								&= [A_{-1},A_{n-1}]	&\text{by (I.H.) for $l=-2$, $m=n-2$} \\
								&= [A_0,A_n] 				&\text{by (I.H.) for $l=-1$, $m=n-1$.}								
\end{align*}
So by Lemma \ref{addLemma}, we have that (\ref{addRel1})$_n \Leftrightarrow$ (\ref{addRel2})$_n$. Hence it is sufficient to prove (\ref{addRel1})$_n$, the proof of which is as follows: 
\begin{align*}
	-\frac{1}{2}[A_0,[A_1,A_{-n}]]	&= -\frac{1}{2}[[A_0,A_1],A_{-n}] - \frac{1}{2}[A_1,[A_0,A_{-n}]] \tag*{by \eqref{JacobiId}} \\
												&= [G_1,A_{-n}] - \frac{1}{2}[A_1,[A_0,A_{-n}]] \tag*{by (\ref{G1Def})} \\
												&= [G_1,A_{-n}] + \frac{1}{2}[A_1,[A_0,A_n]] \tag*{since $[A_{-n},A_0] = [A_0,A_n]$} \\
												&= [G_1,A_{-n}] + \frac{1}{2}[[A_1,A_0],A_n] + \frac{1}{2}[A_0,[A_1,A_n]] \tag*{by \eqref{JacobiId}} \\
												&= [G_1,A_{-n}] + [G_1,A_n] + \frac{1}{2}[A_0,[A_{-n+2},A_1]] \tag*{by (\ref{G1Def}) and (I.H.)}\\
												&= A_{-n+1} - A_{-n-1} + A_{n+1} - A_{n-1} - \frac{1}{2}[A_0,[A_1,A_{-(n-2)}]] \tag*{by (\ref{Am+1Def})} \\
												&= A_{-n+1} - A_{-n-1} + A_{n+1} - A_{n-1} + A_{n-1} - A_{-n+1} \tag*{by induction} \\
												&= A_{n+1} - A_{-n-1}.
\end{align*}
Hence, by induction, we have that (\ref{addRel1})$_n$ and (\ref{addRel2})$_n$ are satisfied for $0 \leq n \leq N$ and can now return to proving our lemma.

\textit{Inductive Step:} First, consider the case when $M=N-1$. So for all $l$ and $m$ such that $m=0,...,(N-1)$ and $m-l=N-1$, we have that $[A_l,A_m]=[A_{l+1},A_{m+1}]$. Act on both sides of the equation by $\ad_{G_1}$:
\begin{align*}
\text{LHS}	&= [G_1,[A_l,A_m]]\\
		&= [[G_1,A_l],A_m] + [A_l,[G_1,A_m]] \tag*{by \eqref{JacobiId}}\\
		&= [A_{l+1}-A_{l-1},A_m] + [A_l,A_{m+1}-A_{m-1}] \tag*{by (\ref{Am+1Def})}\\
		&= [A_{l+1},A_m] - [A_{l-1},A_m] + [A_l,A_{m+1}] - [A_l,A_{m-1}]\\
		&= [A_l,A_{m+1}] - [A_{l-1},A_m] \tag*{by (I.H.) for $M= N-2$,}
		\\
\text{RHS}	&= [G_1,[A_{l+1},A_{m+1}]]\\
		&= [[G_1,A_{l+1}],A_{m+1}] + [A_{l+1},[G_1,A_{m+1}]] \tag*{by \eqref{JacobiId}}\\
		&= [A_{l+2}-A_l,A_{m+1}] + [A_{l+1},A_{m+2}-A_m] \tag*{by (\ref{Am+1Def})}\\
		&= [A_{l+2},A_{m+1}] - [A_l,A_{m+1}] + [A_{l+1},A_{m+2}] - [A_{l+1},A_m]\\
		&= [A_{l+1},A_{m+2}] - [A_l,A_{m+1}] \tag*{by (I.H.) for $M= N-2$.}
\end{align*}
Hence,
\begin{gather*}
[A_l,A_{m+1}] - [A_{l-1},A_m] = [A_{l+1},A_{m+2}] - [A_l,A_{m+1}]\\
\Longleftrightarrow [A_{l-1},A_m] = 2[A_l,A_{m+1}] - [A_{l+1},A_{m+2}].
\end{gather*}
If we let $l'=l-1$, then we obtain the following $N$ equations
\begin{align*}
	 [A_{l'},A_m] = 2[A_{l'+1},A_{m+1}] - [A_{l'+2},A_{m+2}],
\end{align*}
where $m = 0,\dots,(N-1)$ and $m-l'=N$. This is equivalent to
\begin{align*}
	 [A_{m-N},A_m] = 2[A_{m+1-N},A_{m+1}] - [A_{m+2-N},A_{m+2}],
\end{align*}
for $m = 0,\dots,(N-1)$. For notational simplicity, let $X_m = [A_{m-N},A_m]$. So, since $m = 0,...,(N-1)$, we have $N+2$ variables along with the $N$ equations,
\begin{align}
	X_m - 2X_{m+1} + X_{m+2} = 0. \label{Nequs}
\end{align}
Observe that the coefficient matrix of the linear system is in row-echelon form. For example, for $N=4$ we get
\begin{center}
$\begin{bmatrix}
1&2&1&0&0&0\\
0&1&2&1&0&0\\
0&0&1&2&1&0\\
0&0&0&1&2&1
\end{bmatrix}$.
\end{center}
Hence, in general, we can determine $X_0,\dots,X_{N-1}$ as a linear combination of $X_N$ and $X_{N+1}$. We postpone the proof that there exists one more independent equation which will allow us to take $X_{N+1}$ as independent variable and $X_0,...,X_N$ as dependent variables. Rather, we suppose for a moment that
\begin{align}
	X_j = a_jX_{N+1} &\qquad \text{for $0 \leq j \leq N$, $a_j \in k$.}
\end{align}
It follows from (\ref{Nequs}) that 
\begin{gather*}
	X_m - 2X_{m+1} + X_{m+2} = 0 \\
	\Leftrightarrow a_mX_{N+1} - 2a_{m+1}X_{N+1} + a_{m+2}X_{N+1} = 0\\
	\Leftrightarrow (a_m - 2a_{m+1} + a_{m+2})X_{N+1} = 0.
\end{gather*}
Thus, for all $m=0,...,N-1$, we have that 
\begin{align}
	a_m - 2a_{m+1} + a_{m+2} = 0. \label{coefaN}
\end{align}
Using these relations, one can prove the claim that \[a_{N-q} = (q+1)a_N - q\text{, for }1 \leq q \leq N.\] To do so, we proceed by induction on $q$. 

\noindent\emph{Base Cases:} For $q=1$ and $q=2$ we have that
\begin{align*}
a_{N-1}	&= 2a_N - a_{N+1}	& a_{N-2}	&= 2a_{N-1} - a_N		\tag*{by (\ref{coefaN})}\\
				&= 2a_N - 1,			&					&= 2(2a_N - 1) - a_N \\
				&									&					&= 3a_N - 2. 
\end{align*}

\noindent\emph{Induction Hypothesis (I.H.):} Assume $a_{N-q} = (q+1)a_N - q$ for $1 \leq q \leq N-1$. 

\noindent\emph{Inductive Step:} By our induction hypothesis we have that 
\[a_{N-(N-2)} = (N-1)a_N - (N-2) \quad \text{and} \quad a_{N-(N-1)} = Na_N - (N-1).\]
Using this we can show the $q=N$ case as follows:
\begin{align*}
	a_{N-N}	&= a_0 = 2a_1 - a_2 \tag*{by (\ref{coefaN})} \\
					&= 2(Na_N - (N-1)) - ((N-1)a_N - (N-2)) \tag*{by I.H.}\\
					&= (N+1)a_N - N.
\end{align*}
So this proves our claim and gives us the relations between our variables as follows:
\begin{align*}
	X_{N+1}	&= X_{N+1}, \\
	X_N 		&= a_NX_{N+1}, \text{ and}\\
	X_{N-q}	&= ((q+1)a_N - q)X_{N+1}, \quad \text{ for } 1 \leq q \leq N.
\end{align*}
Then, once we have our additional independent equation, we will be able to figure out the value of $a_N$. 

To find one more independent equation, we must consider the even and odd $N$ cases separately. Let us begin with the even case, $N=2m$, and consider the ($N-1$) case which tells us that $[A_{-2m+2},A_1] = [A_{-m+1},A_m]$. We act on both sides of this relation by $\ad_{A_1}\ad_{A_0}$ to get
\begin{align*}
\text{LHS}	={}& [A_1,[A_0,[A_{-2m+2},A_1]]]\\
						={}& -[A_1,[A_0,[A_1,A_{-2m+2}]]] \\
						={}& 2[A_1,A_{2m-1}-A_{-2m+1}] \tag*{by (\ref{addRel1})$_{2m-2}$}\\
						={}& 2[A_1,A_{2m-1}] + 2[A_{-2m+1},A_1],\\
\quad \\
\text{RHS} ={}& [A_1,[A_0,[A_{-m+1},A_m]]] \\
		={}& [A_1,[[A_0,A_{-m+1}],A_m]] + [A_1,[A_{-m+1},[A_0,A_m]]] \tag*{by \eqref{JacobiId}}\\
		={}& [[A_1,[A_0,A_{-m+1}]],A_m] + [[A_0,A_{-m+1}],[A_1,A_m]] + [[A_1,A_{-m+1}],[A_0,A_m]]\\
		 {}& + [A_{-m+1},[A_1,[A_0,A_m]]] \tag*{by \eqref{JacobiId}}\\
		={}& [[[A_1,A_0],A_{-m+1}],A_m] + [[A_0,[A_1,A_{-m+1}]],A_m] + [[A_m,A_1],[A_0,A_{-m+1}]]\\
		 {}& + [[A_m,A_0],[A_1,A_{-m+1}]] + 2[A_{-m+1},A_{m+1}-A_{-m+1}] \tag*{by \eqref{JacobiId} \& (\ref{addRel2})$_m$}\\
		={}& 2[[G_1,A_{-m+1}],A_m] - 2[A_m-A_{-m},A_m] + 0 \\
		 {}& + 0 + 2[A_{-m+1},A_{m+1}-A_{-m+1}] \tag*{by I.H., (\ref{G1Def}) and (\ref{addRel1})$_{m-1}$}\\
		={}& 2[A_{-m+2}-A_{-m},A_m] + 2[A_{-m},A_m] + 2[A_{-m+1},A_{m+1}] \tag*{by (\ref{Am+1Def})}\\
		={}& 2[A_{-m+2},A_m] + 2[A_{-m+1},A_{m+1}],\\
\quad \\
\text{LHS} ={}& \text{RHS} \\
\Leftrightarrow{}& 2[A_1,A_{2m-1}] + 2[A_{-2m+1},A_1] = 2[A_{-m+2},A_m] + 2[A_{-m+1},A_{m+1}] \\
\Leftrightarrow{}& [A_1,A_{2m-1}] + [A_{-2m+1},A_1] = [A_{-m+2},A_m] + [A_{-m+1},A_{m+1}] \\
\Leftrightarrow{}& [A_{-2m+1},A_1] = [A_{-m+1},A_{m+1}] \tag*{by I.H. for $N-2$} \\
\Leftrightarrow{}& [A_{-N+1},A_1] = [A_{-\frac{N}{2}+1},A_{\frac{N}{2}+1}] \tag*{since $N=2m$}\\
\Leftrightarrow{}& [A_{1-N},A_1] = [A_{\frac{N}{2}+1-N},A_{\frac{N}{2}+1}] \\
\Leftrightarrow{}& X_1 = X_{\frac{N}{2}+1} \\
\Leftrightarrow{}& X_1 - X_{\frac{N}{2}+1} = 0.
\end{align*}
So we have our additional independent equation which allows us to take $X_{N+1}$ as independent variable and can now determine the value of $a_N$ in the even case. 
\begin{gather*}
X_1 - X_{\frac{N}{2}+1} = 0 \\
\Leftrightarrow	(Na_N-(N-1))X_{N+1} - \left(\frac{N}{2}a_N - \frac{N}{2} + 1\right)X_{N+1} = 0 \\
\Leftrightarrow \left(\left(N-\frac{N}{2}\right)a_N - N + \frac{N}{2} +1 - 1\right)X_{N+1} = 0 \\
\Leftrightarrow \frac{N}{2}a_N - \frac{N}{2} = 0 \\
\Leftrightarrow a_N = 1.
\end{gather*}
Hence, we have that $a_{N-q} = (q+1)1 - q = 1$ for all $1 \leq q \leq N$, leading us to the conclusion that 
\begin{gather*}
X_0 = X_1 = \ldots = X_{N-1} = X_N = X_{N+1}\\
\Longleftrightarrow [A_{-N},A_0] = [A_{1-N},A_1] = \ldots = [A_0,A_N] = [A_1,A_{N+1}]. 
\end{gather*}
So we have the result for even $N$. 

Next, we consider the odd case, $N=2m+1$. This case requires a little more work in order to find our additional equation. First, by induction hypothesis, we have that $[A_{-2m},A_0]=[A_{-m},A_m]$. If we act on both sides by $\ad_{A_0}\ad_{A_1}$, we obtain
\begin{align*}
\text{LHS} 	={}& [A_0,[A_1,[A_{-2m},A_0]]]\\
						={}& [A_0,[[A_1,A_{-2m}],A_0]] + [A_0,[A_{-2m},[A_1,A_0]]] \tag*{by \eqref{JacobiId}}\\
						={}& -[A_0,[A_0,[A_1,A_{-2m}]]] - 2[A_0,[G_1,A_{-2m}]] \tag*{by (\ref{G1Def})}\\
						={}& 2[A_0,A_{2m+1}-A_{-2m-1}] - 2[A_0,A_{-2m+1}-A_{-2m-1}] \tag*{by (\ref{addRel1})$_{2m}$ and (\ref{Am+1Def})}\\
						={}& 2[A_0,A_{2m+1}] + 2[A_{-2m+1},A_0],\\
						\\
\text{RHS}	={}& [A_0,[A_1,[A_{-m},A_m]]]\\
						={}& [A_0,[[A_1,A_{-m}],A_m]] + [A_0,[A_{-m},[A_1,A_m]]] \tag*{by \eqref{JacobiId}}\\
						={}& [[A_0,[A_1,A_{-m}]],A_m] + [[A_1,A_{-m}],[A_0,A_m]] + [[A_0,A_{-m}],[A_1,A_m]] \\
						 {}& + [A_{-m},[A_0,[A_1,A_m]]] \tag*{by \eqref{JacobiId}}\\
						={}& -2[A_{m+1}-A_{-m-1},A_m] + [[A_1,A_{-m}],[A_0,A_m]] + [[A_0,A_{-m}],[A_1,A_m]] \\
						 {}& + [A_{-m},[[A_0,A_1],A_m]] + [A_{-m},[A_1,[A_0,A_m]]] \tag*{by (\ref{addRel1})$_m$ and \eqref{JacobiId}}\\
						={}& 2[A_m,A_{m+1}] + 2[A_{-m-1},A_m] + [[A_1,A_{-m}],[A_0,A_m]] + [[A_0,A_{-m}],[A_1,A_m]] \\
						 {}& - 2[A_{-m},[G_1,A_m]] + 2[A_{-m},A_{m+1}-A_{-m+1}] \tag*{by (\ref{G1Def}) and (\ref{addRel2})$_m$}\\
						={}& 2[A_m,A_{m+1}] + 2[A_{-m-1},A_m] + [[A_1,A_{-m}],[A_0,A_m]] + [[A_0,A_{-m}],[A_1,A_m]] \\
						 {}& - 2[A_{-m},A_{m+1}-A_{m-1}] + 2[A_{-m},A_{m+1}-A_{-m+1}] \tag*{by (\ref{Am+1Def})}\\
						={}& 2[A_m,A_{m+1}] + 2[A_{-m-1},A_m] + [[A_1,A_{-m}],[A_0,A_m]] + [[A_0,A_{-m}],[A_1,A_m]] \\
						 {}& + 2[A_{-m},A_{m-1}] - 2[A_{-m},A_{-m+1}]\\
						={}& 2[A_{-m-1},A_m] + 2[A_{-m},A_{m-1}] + [[A_1,A_{-m}],[A_0,A_m]] + [[A_0,A_{-m}],[A_1,A_m]].
\end{align*}
So from LHS $=$ RHS, we get that
\begin{align*}
2[A_0,A_{2m+1}] + 2[A_{-2m+1},A_0] ={}& 2[A_{-m-1},A_m] + 2[A_{-m},A_{m-1}] \\
																			& + [[A_1,A_{-m}],[A_0,A_m]] + [[A_0,A_{-m}],[A_1,A_m]].
\end{align*}
Then, we note that by the induction hypothesis we get $[A_{-2m+1},A_0] = [A_{-m},A_{m-1}]$. Hence we get the first equation we will be working with:
\begin{align}
2[A_0,A_{2m+1}] = 2[A_{-m-1},A_m] + [[A_1,A_{-m}],[A_0,A_m]] + [[A_0,A_{-m}],[A_1,A_m]]. \label{equn1}
\end{align}
Next, again by induction hypothesis, we have that $[A_{-2m+1},A_1] = [A_{-m},A_m]$. If we act on both sides by $\ad_{A_1}\ad_{A_0}$, then we have that
\begin{align*}
\text{LHS}	&= [A_1,[A_0,[A_{-2m+1},A_1]]] \\
						&= -[A_1,[A_0,[A_1,A_{-2m+1}]]] \\
						&= 2[A_1,A_{2m}-A_{-2m}] \tag*{by (\ref{addRel1})$_{2m-1}$}\\
						&= 2[A_1,A_{2m}] - 2[A_1,A_{-2m}],\\
						\\
\text{RHS}	&= [A_1,[A_0,[A_{-m},A_m]]] \\
						&= [A_1,[[A_0,A_{-m}],A_m]] + [A_1,[A_{-m},[A_0,A_m]]] \tag*{by \eqref{JacobiId}}\\
						&= [[A_1,[A_0,A_{-m}]],A_m] + [[A_0,A_{-m}],[A_1,A_m]] + [[A_1,A_{-m}],[A_0,A_m]] \\
						& ~ + [A_{-m},[A_1,[A_0,A_m]]] \tag*{by \eqref{JacobiId}}\\
						&= [[[A_1,A_0],A_{-m}],A_m] + [[A_0,[A_1,A_{-m}]],A_m] + [[A_0,A_{-m}],[A_1,A_m]] \\
						& ~ + [[A_1,A_{-m}],[A_0,A_m]] + 2[A_{-m},A_{m+1}-A_{-m+1}] \tag*{by \eqref{JacobiId}, (\ref{addRel2})$_m$}\\
						&= 2[[G_1,A_{-m}],A_m] - 2[A_{m+1}-A_{-m-1},A_m] + [[A_0,A_{-m}],[A_1,A_m]] \\
						& ~ + [[A_1,A_{-m}],[A_0,A_m]] + 2[A_{-m},A_{m+1}] - 2[A_{-m},A_{-m+1}] \tag*{by (\ref{G1Def}), (\ref{addRel1})$_m$}\\
						&= 2[A_{-m+1},A_m] + 2[A_m,A_{-m-1}] - 2[A_{m+1},A_m] + 2[A_{-m-1},A_m]  \\
						& + [[A_0,A_{-m}],[A_1,A_m]] + [[A_1,A_{-m}],[A_0,A_m]] + 2[A_{-m},A_{m+1}] - 2[A_{-m},A_{-m+1}] \\
						&= 2[A_{-m+1},A_m] + 2[A_{-m},A_{m+1}] + [[A_0,A_{-m}],[A_1,A_m]] + [[A_1,A_{-m}],[A_0,A_m]].  
\end{align*}
So from LHS $=$ RHS, we have that 
\begin{align*}
2[A_1,A_{2m}] - 2[A_1,A_{-2m}] ={}& 2[A_{-m+1},A_m] + 2[A_{-m},A_{m+1}] \\
																	& + [[A_1,A_{-m}],[A_0,A_m]] + [[A_0,A_{-m}],[A_1,A_m]]. \end{align*}
Then, we note that by induction hypothesis we have that $[A_1,A_{2m}] = [A_{-m+1},A_m]$. Hence we get the second equation we will be working with:
\begin{align}
2[A_{-2m},A_1] = 2[A_{-m},A_{m+1}] + [[A_1,A_{-m}],[A_0,A_m]] + [[A_0,A_{-m}],[A_1,A_m]]. \label{equn2}
\end{align}
We then combine (\ref{equn1}) and (\ref{equn2}) to get our additional independent equation
\begin{align*}
&[A_0,A_{2m+1}] - [A_{-2m},A_1] = [A_{-m-1},A_m] - [A_{-m},A_{m+1}] \\
&\Leftrightarrow [A_0,A_N] - [A_{1-N},A_1] = \left[A_{\frac{-1-N}{2}},A_{\frac{N-1}{2}}\right] - \left[A_{\frac{1-N}{2}},A_{\frac{N+1}{2}}\right] \tag*{since $N=2m+1$}\\
&\Leftrightarrow [A_{N-N},A_N] - [A_{1-N},A_1] - \left[A_{\frac{N-1}{2}-N},A_{\frac{N-1}{2}}\right] + \left[A_{\frac{N+1}{2}-N},A_{\frac{N+1}{2}}\right] = 0 \\
&\Leftrightarrow X_N - X_1 - X_{\frac{N-1}{2}} + X_{\frac{N+1}{2}} = 0.
\end{align*}
We can now determine the value of $a_N$ in the odd case. 
\begin{align*}
& X_N - X_1 - X_{\frac{N-1}{2}} + X_{\frac{N+1}{2}} = 0 \\
&\Leftrightarrow	X_N - X_{N-(N-1)} - X_{N-\frac{N+1}{2}} + X_{N-\frac{N-1}{2}} = 0 \\
&\Leftrightarrow 	a_NX_{N+1} - (Na_N-(N-1))X_{N+1} - \left(\left(\frac{N+3}{2}\right)a_N-\left(\frac{N+1}{2}\right)\right)X_{N+1} \\
& \qquad \qquad \qquad \qquad \qquad \qquad \qquad \qquad + \left(\left(\frac{N+1}{2}\right)a_N-\left(\frac{N-1}{2}\right)\right)X_{N+1} = 0 \\
&\Leftrightarrow \left(1 - N - \left(\frac{N+3}{2}\right) + \left(\frac{N+1}{2}\right)\right)a_NX_{N+1} \\
& \qquad \qquad \qquad \qquad \qquad \qquad \quad + \left((N-1) + \left(\frac{N+1}{2}\right) - \left(\frac{N-1}{2}\right)\right)X_{N+1} = 0 \\
&\Leftrightarrow \left(\left(\frac{2-2N-(N+3)+(N+1)}{2}\right)a_N + \frac{2N-2+N+1-N+1}{2}\right)X_{N+1} = 0 \\
&\Leftrightarrow \frac{N}{2}a_N - \frac{N}{2} = 0 \qquad \text{since $X_{N+1} \neq 0$}\\
&\Leftrightarrow a_N = 1.
\end{align*}
Hence, we have that $a_{N-q} = (q+1)1 - q = 1$ for all $1 \leq q \leq N$, leading us to the conclusion that 
\begin{gather*}
X_0 = X_1 = \ldots = X_{N-1} = X_N = X_{N+1}\\
\Longleftrightarrow [A_{-N},A_0] = [A_{1-N},A_1] = \ldots = [A_0,A_N] = [A_1,A_N+1]. 
\end{gather*}
So we have the result for odd $N$, and this completes the proof of our lemma. 
\end{proof}

\begin{cor} \label{corG-m} If $A_0$ and $A_1$ satisfy the Dolan-Grady relations (\ref{DG1}) and (\ref{DG2}), then (\ref{Onsrel1}), i.e., $[A_l,A_m] = 2G_{l-m}$, is satisfied. Furthermore, it follows that	$G_{-m} = -G_m$.
\end{cor}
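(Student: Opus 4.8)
The plan is to derive both assertions immediately from Lemma~\ref{IndiceDep} (shift-invariance of the brackets $[A_l,A_m]$ in the difference $l-m$) together with the defining formula (\ref{GmDef}) for $G_m$; no genuinely new computation is required, since all of the work has already been done in Lemma~\ref{IndiceDep}.

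First I would fix arbitrary $l,m\in\mathbb{Z}$ and invoke Lemma~\ref{IndiceDep}, which tells us that $[A_l,A_m]$ depends only on $l-m$. Shifting both indices down by $m$ — legitimate because $A_n$ is defined for every $n\in\mathbb{Z}$ by (\ref{Am+1Def}) and this shift preserves the difference of the indices — gives $[A_l,A_m]=[A_{l-m},A_0]$. Applying (\ref{GmDef}) to the right-hand side yields $[A_{l-m},A_0]=2G_{l-m}$, and hence $[A_l,A_m]=2G_{l-m}$, which is exactly (\ref{Onsrel1}).

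For the ``furthermore'' part I would specialize the identity just obtained to $l=0$, getting $2G_{-m}=[A_0,A_m]$. By antisymmetry of the bracket, $[A_0,A_m]=-[A_m,A_0]$, and $[A_m,A_0]=2G_m$ by (\ref{GmDef}); therefore $2G_{-m}=-2G_m$, i.e.\ $G_{-m}=-G_m$. I would also note that the degenerate case $m=0$ is consistent: $G_0=\frac{1}{2}[A_0,A_0]=0$, matching both $[A_l,A_l]=0=2G_0$ and $G_0=-G_0$.

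There is essentially no obstacle at this stage — the difficulty was entirely absorbed into Lemma~\ref{IndiceDep}. The only points requiring a bit of care are bookkeeping ones: ensuring the shift-invariance of Lemma~\ref{IndiceDep}, whose statement is phrased as a single-step shift within a bounded range of indices (its proof also producing the extra endpoint $[A_1,A_{N+1}]$), is legitimately applied to \emph{all} integer indices by iteration, and keeping the signs straight when passing between $[A_0,A_m]$ and $[A_m,A_0]$ in the last step.
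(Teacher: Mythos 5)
Your proposal is correct and follows essentially the same route as the paper: iterate the shift-invariance of Lemma~\ref{IndiceDep} to reduce $[A_l,A_m]$ to $[A_{l-m},A_0]=2G_{l-m}$ via (\ref{GmDef}), and then obtain $G_{-m}=-G_m$ from antisymmetry together with shift-invariance (the paper computes $G_{-m}=\tfrac{1}{2}[A_{-m},A_0]=-\tfrac{1}{2}[A_m,A_0]$ directly, while you specialize the just-proven (\ref{Onsrel1}) at $l=0$ — the same argument in different order). Your remark about extending the single-step shift of Lemma~\ref{IndiceDep} to all integer index pairs is a fair point of care that the paper itself also passes over silently.
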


\begin{proof} It follows from Lemma \ref{IndiceDep} that the commutators $[A_l,A_m]$ depend only on the difference $l-m$ of the indices. Furthermore, recall from (\ref{GmDef}) that we defined $[A_m,A_0] = 2G_m$. We can then use this to generalize to the case $[A_l,A_m]$ as follows:
\begin{align*}
[A_l,A_m] &= [A_{l-1},A_{m-1}] \tag*{by Lemma \ref{IndiceDep}}\\
					&= \ldots \\
					&= [A_{l-m+1},A_1] \tag*{by Lemma \ref{IndiceDep}}\\
					&= [A_{l-m},A_0] \tag*{by Lemma \ref{IndiceDep}}\\
					&= 2G_{l-m} \tag*{by (\ref{GmDef}).}\\
\end{align*}
Hence (\ref{Onsrel1}) is satisfied. Furthermore, it follows that
\begin{align*}
	G_{-m} 	&= \frac{1}{2}[A_{-m},A_0] \tag*{by (\ref{GmDef})}\\
					&= -\frac{1}{2}[A_0,A_{-m}] \\
					&= -\frac{1}{2}[A_m,A_0] \tag*{by Lemma \ref{IndiceDep}}\\
					&= -G_m \tag*{by (\ref{GmDef}).}
\end{align*}
\end{proof}

So, we have shown that (\ref{Onsrel1}) is satisfied. Furthermore, it follows from Lemma \ref{Ons1imp23} that (\ref{Onsrel2}) and {\ref{Onsrel3}) are also satisfied. This proves Theorem \ref{Onsrelthm}. This theorem is then utilized to show our Proposition \ref{IsomDG} in the next section.

\section{Alternate Presentation of the Onsager Algebra}
\label{sec:AlternatePresentationOfTheOnsagerAlgebra}

In this section, we utilize Theorem \ref{Onsrelthm} in order to show Proposition \ref{IsomDG} which states that Onsager's algebra, $\mathcal{O}$, is isomorphic to the Dolan-Grady algebra, $\mathcal{OA}$. 

\begin{proof}[Proof of Proposition \ref{IsomDG}] Consider the map $\zeta : \mathcal{OA} \rightarrow \mathcal{O}$ given by
\begin{align}
    A \mapsto A_0, \qquad B \mapsto A_1. \label{Oisom}
\end{align}
It follows from
\begin{align*}
    [A_0,[A_0,[A_0,A_1]]] &= -[A_0,[A_0,2G_1]] \\
                          &= 2[A_0,[G_1,A_0]] \\
                          &= 2[A_0,A_1 - A_{-1}] \\
                          &= 2([A_0,A_1] - [A_0,A_{-1}]) \\
                          &= -8G_1 \\
                          &= 4[A_0,A_1],
\end{align*}
\begin{align*}
    [A_1,[A_1,[A_1,A_0]]] &= [A_1,[A_1,2G_1]] \\
                          &= -2[A_1,[G_1,A_1]] \\
                          &= -2[A_1,A_2 - A_0] \\
                          &= -2([A_1,A_2] - [A_1,A_0]) \\
                          &= 8G_1 \\
                          &= 4[A_1,A_0],
\end{align*}
that $A_0$, $A_1$ satisfy relations (\ref{DG1}) and (\ref{DG2}), hence proving the existence of a homomorphism as defined in (\ref{Oisom}). This homomorphism is unique since $A$ and $B$ generate $\mathcal{OA}$.

Next, consider the subalgebra $N$ generated by $A_0$ and $A_1$. We claim that from the relations (\ref{Onsrel1}) and (\ref{Onsrel2}) it follows that $N$ is the algebra $\mathcal{O}$. This can be proven by induction on $l \in \mathbb{N}$ as follows.

\emph{Base Case:} Consider the case when $l=1$. From $[A_1,A_0] = 2G_1$, it follows that $G_1 \in N$. Furthermore, since $[G_1,A_0] = A_1 - A_{-1}$, it follows that $A_{-1} = A_1 - (A_1 - A_{-1}) \in N$.

\emph{Induction Hypothesis:} Assume that for all $l < n$ ($n \in \mathbb{N}$), $A_l$, $A_{-l}$, $G_l$ and $G_{-l}$ are included in $N$.

\emph{Inductive Step:} Consider when $l = n$. It follows from
\begin{align*}
    [A_{n-1},A_{-1}] = 2G_n = -2G_{-n} \qquad \text{and} \qquad [G_1,A_{n-1}] = A_n - A_{n-2}
\end{align*}
that $G_n$, $G_{-n}$ and $A_n = (A_n - A_{n-2}) + A_{n-2}$ are included in $N$. Furthermore, from $[G_n,A_0] = A_n - A_{-n}$, it follows that $A_{-n} = A_n - (A_n - A_{-n}) \in N$.

So this confirms that $N$ is in fact the Lie algebra $\mathcal{O}$ since it contains its basis, $\{A_m, G_l \mid m \in \mathbb{Z}, l \in \mathbb{N}_+\}$. Hence, $\zeta$ is surjective. It remains to show that $\zeta$ is injective.

Let $\widetilde{A_0} = A$ and $\widetilde{A_1} = B$. Let $\{\widetilde{A_m}, \widetilde{G_l}| m,l \in \mathbb{Z}\}$ be defined as follows 
\begin{align}
    \widetilde{G_1} &= \frac{1}{2}[\widetilde{A_1},\widetilde{A_0}] \label{G1tilde}\\
    \widetilde{A_{m+1}} - \widetilde{A_{m-1}} &= [\widetilde{G_1}, \widetilde{A_m}], \label{Am+1tilde}\\
    \widetilde{G_m} &= \frac{1}{2}[\widetilde{A_m},\widetilde{A_0}]. \label{Gmtilde} 
\end{align}
Theorem \ref{Onsrelthm} then tells us that $\widetilde{A_m}$ and $\widetilde{G_m}$ satisfy the relations (\ref{Onsrel1}), (\ref{Onsrel2}) and (\ref{Onsrel3}).

Clearly, we have that $\mathcal{OA} \subseteq \Span\{\widetilde{A_m}, \widetilde{G_l}\}$. Furthermore, one notices that $\widetilde{A_m}$ and $\widetilde{G_l}$ can be written in terms of $\widetilde{A_0} = A$ and $\widetilde{A_1} = B$ by recursively applying the definitions of $\widetilde{A_m}$ and $\widetilde{G_l}$ above (\ref{Am+1tilde}) and (\ref{Gmtilde}), so $\Span\{\widetilde{A_m}, \widetilde{G_l}\} \subseteq \mathcal{OA}$. Hence $\Span\{\widetilde{A_m}, \widetilde{G_l}\} = \mathcal{OA}$.

Next, we claim that $\zeta(\widetilde{A_m}) = A_m$ and $\zeta(\widetilde{G_l}) = G_l$. To prove this we proceed by induction on $l \in \mathbb{N}$.

\textit{Base Cases:}
\begin{align*}
l = 0 && \zeta(\widetilde{A_0}) &= \zeta(A) = A_0, \\
l = 1 && \zeta(\widetilde{A_1}) &= \zeta(B) = A_1, \\
      && \zeta(\widetilde{G_1}) &= \frac{1}{2}[\zeta(\widetilde{A_1}),\zeta(\widetilde{A_0})] = \frac{1}{2}[A_1,A_0] = G_1, \\
      && \zeta(\widetilde{G_{-1}}) &= -\zeta(\widetilde{G_1}) = -G_l = G_{-l}, \\
      && \zeta(\widetilde{A_{-1}}) &= \zeta(\widetilde{A_1} - [\widetilde{G_1},\widetilde{A_0}]) = \zeta(\widetilde{A_1}) - [\zeta(\widetilde{G_1}),\zeta(\widetilde{A_0})] \\
      &&&= \zeta(B) - [G_1,A_0] = A_1 - A_1 + A_{-1} = A_{-1}.
\end{align*}

\textit{Induction Hypothesis:} Assume $\zeta(\widetilde{A_n}) = A_n$, $\zeta(\widetilde{A_{-n}}) = A_{-n}$, $\zeta(\widetilde{G_n}) = G_n$ and $\zeta(\widetilde{G_{-n}}) = G_{-n}$ for $0 < n < l$.

\textit{Inductive Step:} Prove for $n = l$.
\begin{align*}
	\zeta(\widetilde{A_l})	&= \zeta(\widetilde{A_{l-2}} + [\widetilde{G_1},\widetilde{A_{l-1}}]) \\
                          &= \zeta(\widetilde{A_{l-2}}) + [\zeta(\widetilde{G_1}),\zeta(\widetilde{A_{l-1}})] \\
                          &= A_{l-2} + [G_1,A_{l-1}] \qquad \text{by induction hypothesis} \\
                          &= A_{l-2} + A_l - A_{l-2} = A_l, \\
	\zeta(\widetilde{G_l})	&= \frac{1}{2}[\zeta(\widetilde{A_l}, \zeta(\widetilde{A_0})] \\
                          &= \frac{1}{2}[A_l,A_0] \qquad \text{by above and by induction hypothesis} \\
                          &= G_l, \\    
	\zeta(\widetilde{A_{-l}})	&= \zeta(\widetilde{A_l} - [\widetilde{G_l},\widetilde{A_0}]) \\
                           	&= \zeta(\widetilde{A_l}) - [\zeta(\widetilde{G_l}),\zeta(\widetilde{A_0})] \\
                            &= A_l - [G_l,A_0] \qquad \text{by above and induction hypothesis} \\
                            &= A_l - A_l + A_{-l} = A_{-l}, \\
	\zeta(\widetilde{G_{-l}})	&= -\zeta(\widetilde{G_l}) = -G_l = G_{-l}.
\end{align*}
This proves our claim. Since $\{A_m, G_l \mid m \in \mathbb{Z}, l \in \mathbb{N}_+\}$ is a basis of $\mathcal{O}$, it follows that $\zeta$ is injective. Hence $\zeta$ is an isomorphism.
\end{proof}

We conclude this chapter by noting that following the result of Proposition \ref{IsomDG}, Definition \ref{OnsDefOrig} and Definition \ref{DGalgDef} are used interchangeably in the literature to define the Onsager algebra.
\cleardoublepage

\chapter{The Onsager Algebra as an Equivariant Map Algebra}
\label{sec:TheOnsagerAlgebraAsAnEquivariantMapAlgebra}

\section{Equivariant Map Algebras}
\label{sec:EquivariantMapAlgebras}

Let $X$ be an algebraic variety and let $\mathfrak{g}$ be an finite-dimensional Lie algebra, both defined over an algebraically closed field of characteristic zero $k$ and equipped with the action of a finite group $\Gamma$ by automorphisms. 

\begin{defn} A \emph{regular map} between affine varieties is a mapping which is given locally by polynomials. 
\end{defn}

\begin{defn}[Map Algebra] The \emph{map algebra} associated to $X$ and $\mathfrak{g}$, denoted $M(X,\mathfrak{g})$, is the Lie algebra of regular maps from $X$ to $\mathfrak{g}$ with bracket defined as \[ [\beta, \gamma]_{M(X,\mathfrak{g})}(x) := [\beta(x),\gamma(x)]_{\mathfrak{g}}, \] for $x \in X$ and $\beta, \gamma \in M(X,\mathfrak{g})$. 
\end{defn}

\begin{defn}[Equivariant Map] A map $f: Y \rightarrow Z$ between the varieties $Y$ and $Z$ is said to be \emph{equivariant} under the action of a group $\Gamma$ if \[ f(g \cdot x) = g \cdot f(x), \] for any $x \in Y$ and $g \in \Gamma$. 
\end{defn}

\begin{defn}[Equivariant Map Algebra] The \emph{equivariant map algebra} associated to $X$, $\mathfrak{g}$ and $\Gamma$, denoted $\mathfrak{M} = M(X,\mathfrak{g})^\Gamma$, is the algebra of maps equivariant with respect to the action of $\Gamma$, that is \[ M(X,\mathfrak{g})^\Gamma = \{\alpha \in M(X,\mathfrak{g}) ~ | ~ \alpha(g \cdot x) = g \cdot \alpha (x) ~ \forall x \in X, g \in \Gamma\}. \]
\end{defn}

Denoting by $A_X$ the coordinate ring of $X$, Lemma 3.4 from \cite{NeherESavageASenesiP09} shows that an equivariant map algebra can also be realized as the fixed point Lie algebra $\mathfrak{M} = (\mathfrak{g} \otimes A_X)^\Gamma$. 

In this chapter, we will consider the case where $X = \Spec k[t,t^{-1}] = k \backslash \{0\}$ (so, $A_X = k[t,t^{-1}]$), $\mathfrak{g} = \mathfrak{sl}_2$ and $\Gamma = \{1,\alpha\}$ where $\alpha$ acts as an involution on $X$ and $\mathfrak{g}$. In Section \ref{sec:RealizationOfTheOnsagerAlgebra}, we will consider the case where $\alpha$ acts as the Chevalley involution and show that this equivariant map algebra, $(k[t,t^{-1}] \otimes \mathfrak{sl}_2)^\Gamma$, is isomorphic to the Onsager algebra, $\mathcal{O}$. In Section \ref{sec:ClosedIdealsOfTheOnsagerAlgebra}, we will use this correspondence to describe the closed ideals of $\mathcal{O}$. And finally, in Section \ref{sec:AlternateInvolution}, we will consider an alternate involution that appears in the literature and show that the equivariant map algebra obtained is isomorphic to the one defined with the Chevalley involution (and hence is also isomorphic to the Onsager algebra).

\section{Realization of the Onsager Algebra}
\label{sec:RealizationOfTheOnsagerAlgebra}

\noindent Recall the standard basis of $\mathfrak{sl}_2$,
\begin{align*}
    \left\{ e = \left( \begin{array}{cc} 0 & 1 \\ 0 & 0 \end{array} \right), \ f = \left( \begin{array}{cc} 0 & 0 \\ 1 & 0 \end{array} \right), \ h = \left( \begin{array}{cc} 1 & 0 \\ 0 & -1 \end{array} \right) \right\},
\end{align*}
satisfying the relations $[e,f] = h$, $[h,e] = 2e$, and $[h,f] = -2f$. Let $\mathcal{L} = \mathcal{L}(\mathfrak{sl}_2) = k[t,t^{-1}] \otimes \mathfrak{sl}_2$ denote the $\mathfrak{sl}_2$-loop algebra with bracket given by
\begin{align}
    [p \otimes x, q \otimes y] = pq \otimes [x,y] 
\end{align}
for $p,q \in k[t,t^{-1}]$ and $x, y \in \mathfrak{sl}_2$.

\begin{defn}[Chevalley Involution] The \emph{Chevalley involution} on $\mathfrak{sl}_2$ is defined by $u \mapsto \overline{u} = \left( \begin{smallmatrix} 0 & 1 \\ 1 & 0 \end{smallmatrix} \right) u \left( \begin{smallmatrix} 0 & 1 \\ 1 & 0 \end{smallmatrix} \right)$ for $u \in \mathfrak{sl}_2$, hence by
\begin{align}
    \overline{e} := f, \quad \overline{f} := e, \quad \overline{h} := -h .
\end{align}
The Chevalley involution is an involution, defined as an automorphism of order 2. (In this thesis, we will not use the more general concept of a Chevalley involution associated to a Chevalley system, \cite[Section 2.4]{BourbakiN75}). 

The Chevalley involution on $\mathfrak{sl}_2$ induces an involution on the loop algebra $\mathcal{L}$, also called the \emph{Chevalley involution} and denoted $\omega$:
\begin{align}
    \omega(p(t) \otimes x) = p(t^{-1}) \otimes \overline{x} , \quad \text{for } p(t) \in k[t,t^{-1}], x \in \mathfrak{sl}_2.
\end{align}
\end{defn}

Let $\mathcal{L}^{\omega}$ denote the Lie subalgebra of $\mathcal{L}$  fixed by the Chevalley involution, thus $\mathcal{L}^{\omega} = (k[t,t^{-1}] \otimes \mathfrak{sl}_2)^\Gamma$ with $\Gamma = \{1,\omega\}$.

\begin{prop} \label{LwBasis} The following elements form a basis of $\mathcal{L}^{\omega}$:
\begin{align}
    c_l &:= (t^l - t^{-l}) \otimes h = -c_{-l}, &l \in \mathbb{N}_+, \label{cldef} \\
    b_m &:= t^m \otimes e + t^{-m} \otimes f, &m \in \mathbb{Z}. \label{bmdef}
\end{align}
\end{prop}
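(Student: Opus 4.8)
The plan is to show that the listed elements lie in $\mathcal{L}^\omega$, are linearly independent, and span $\mathcal{L}^\omega$. First I would verify membership: applying $\omega$ to $t^l \otimes h$ gives $t^{-l} \otimes \overline{h} = -t^{-l} \otimes h$, so $\omega(c_l) = \omega((t^l - t^{-l})\otimes h) = (t^{-l} - t^l)\otimes(-h) = (t^l - t^{-l})\otimes h = c_l$; similarly $\omega(b_m) = t^{-m}\otimes \overline{e} + t^m \otimes \overline{f} = t^{-m}\otimes f + t^m \otimes e = b_m$. Hence all $c_l$ ($l \in \mathbb{N}_+$) and all $b_m$ ($m \in \mathbb{Z}$) are fixed by $\omega$, so they belong to $\mathcal{L}^\omega$. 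Linear independence is immediate from the fact that $\{t^n \otimes e, t^n \otimes f, t^n \otimes h \mid n \in \mathbb{Z}\}$ is a basis of $\mathcal{L}$: the $c_l$ involve only the $h$-component in degrees $\pm l$ with $l > 0$, while the $b_m$ involve only $e$- and $f$-components, and distinct $b_m$ use distinct monomial degrees on the $e$-side.

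The substantive step is spanning. I would take an arbitrary element $z = \sum_{n \in \mathbb{Z}} (t^n \otimes (\alpha_n e + \beta_n f + \gamma_n h))$ of $\mathcal{L}$ (a finite sum) and impose $\omega(z) = z$. Computing $\omega(z) = \sum_n t^{-n} \otimes (\alpha_n f + \beta_n e - \gamma_n h) = \sum_n t^{n} \otimes (\alpha_{-n} f + \beta_{-n} e - \gamma_{-n} h)$ and comparing coefficients of $t^n \otimes e$, $t^n \otimes f$, $t^n \otimes h$ yields the conditions $\alpha_n = \beta_{-n}$, $\beta_n = \alpha_{-n}$ (these are the same condition), and $\gamma_n = -\gamma_{-n}$ (so in particular $\gamma_0 = 0$). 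Then I would write $z = \sum_{n} \alpha_n (t^n \otimes e) + \sum_n \beta_n(t^n \otimes f) + \sum_{n} \gamma_n(t^n \otimes h)$, group the $e$- and $f$-terms using $\beta_n = \alpha_{-n}$ to get $\sum_n \alpha_n(t^n \otimes e + t^{-n}\otimes f) = \sum_n \alpha_n b_n$, and group the $h$-terms using $\gamma_{-n} = -\gamma_n$ to get $\sum_{n > 0} \gamma_n (t^n \otimes h - t^{-n}\otimes h) = \sum_{n>0}\gamma_n c_n$. This exhibits $z$ as a linear combination of the $b_m$ and $c_l$, completing the proof.

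I expect no serious obstacle here; the only thing to be careful about is bookkeeping of the index symmetry, namely making sure the relabelling $n \mapsto -n$ in the $f$-terms is done consistently and that the $h$-terms with $n=0$ drop out automatically (since $\gamma_0 = 0$). The remark already recorded that $c_{-l} = -c_l$ means we lose nothing by restricting $l$ to $\mathbb{N}_+$, and the $b_m$ genuinely range over all of $\mathbb{Z}$ with no redundancy.
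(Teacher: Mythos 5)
Your proof is correct and follows essentially the same route as the paper's: both amount to computing the $+1$-eigenspace of $\omega$ in $\mathcal{L}$ by elementary linear algebra (membership, independence, and spanning via the fixed-point condition). The only difference is cosmetic --- the paper first exhibits an auxiliary eigenbasis $\{(t^i+t^{-i})\otimes(e+f),\ (t^j-t^{-j})\otimes h,\ (t^j-t^{-j})\otimes(e-f)\}$ of $\mathcal{L}^{\omega}$ and then changes basis to $\{b_m, c_l\}$ using $b_i+b_{-i}$ and $b_j-b_{-j}$, whereas you impose $\omega(z)=z$ on the coefficients of the standard monomial basis and read off the combination of the $b_m$ and $c_l$ in one step.
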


\begin{proof} First note that $\{e-f, h, e+f\}$ forms a basis of $\mathfrak{sl}_2$ and $\{t^i + t^{-i} ~|~ i \in \mathbb{N}\} \cup \{t^j - t^{-j} ~|~ j \in \mathbb{N}_+\}$ forms a basis of $k[t,t^{-1}]$. It follows that 
\begin{align*}
 \left\{ \left.\begin{array}{c} (t^i + t^{-i}) \otimes (e-f), (t^i + t^{-i}) \otimes h, (t^i + t^{-i}) \otimes (e+f), \\ (t^j - t^{-j}) \otimes (e-f), (t^j - t^{-j}) \otimes h, (t^j - t^{-j}) \otimes (e+f) \end{array} \right| i \in \mathbb{N}, j \in \mathbb{N}_+ \right\} 
\end{align*}
forms a basis of $k[t,t^{-1}] \otimes \mathfrak{sl}_2 = \mathcal{L}$.

It is simple to notice that, for $i \in \mathbb{N}$ and $j \in \mathbb{N}_+$, under the action of the Chevalley involution,
\begin{align*}
	a_i &= (t^i + t^{-i}) \otimes (e+f), \\
	c_j &= (t^j - t^{-j}) \otimes h, \\
	d_j &= (t^j - t^{-j}) \otimes (e-f),
\end{align*}
are eigenvectors associated to the eigenvalue $1$ and $(t^j - t^{-j}) \otimes (e+f)$, $(t^i + t^{-i}) \otimes h$ and $(t^i + t^{-i}) \otimes (e-f)$ are eigenvectors associated to eigenvalue $-1$. Hence \[\{a_i ~ | ~ i \in \mathbb{N}\} \cup \{c_j, d_j ~ | ~ j \in \mathbb{N}_+\}\] is a basis of $\mathcal{L}^{\omega}$. Since $a_i = b_i + b_{-i}$ and $d_j = b_j - b_{-j}$, it follows that \[\{c_l ~|~ l \in \mathbb{N}_+\} \cup \{b_m ~|~ m \in \mathbb{Z}\}\] is a spanning set of $\mathcal{L}^{\omega}$. As it is clearly linearly independent, it is in fact a basis of $\mathcal{L}^{\omega}$.
\end{proof}

To simplify notation, from now on we will omit the tensors. Hence our basis will simply be denoted by
\begin{align*}
    c_l &= (t^l - t^{-l}) h = -c_{-l}, &l \in \mathbb{N}_+, \\
    b_m &= t^m e + t^{-m} f, &m \in \mathbb{Z}.
\end{align*}

\begin{theo} \label{ThmBasisRels} The basis of $\mathcal{L}^{\omega}$ from Proposition \ref{LwBasis} satisfies the following relations:
\begin{enumerate}
    \item[(i)] $[b_l,b_m] = c_{l-m}$,
    \item[(ii)] $[c_l,b_m] = 2(b_{m+l} - b_{m-l})$,
    \item[(iii)] $[c_l, c_m] = 0$,
\end{enumerate}
for $l \in \mathbb{N}$ and $m \in \mathbb{Z}$.
\end{theo}

\begin{proof}
\begin{enumerate}
\item[(i)]
\begin{align*}
    \qquad [b_l, b_m] &= [t^le + t^{-l}f, t^me + t^{-m}f] \\
                          &= [t^le, t^{-m}f] + [t^{-l}f, t^me] \\
                          &= t^{l-m}h - t^{-l+m}h \\
                          &= t^{l-m}h - t^{-(l-m)}h \\
                          &= c_{l-m}.
\end{align*}
\item[(ii)]
\begin{align*}    
    \qquad [c_l,b_m] &= [t^lh - t^{-l}h, t^me + t^{-m}f] \\
                     &= [t^lh,t^me] + [t^lh,t^{-m}f] + [-t^{-l}h,t^me] + [-t^{-l}h,t^{-m}f] \\
                     &= 2t^{l+m}e - 2t^{l-m}f - 2t^{m-l}e + 2t^{-l-m}f \\
                     &= 2((t^{l+m}e + t^{-(l+m)}f) - (t^{m-l}e + t^{-(m-l)}f)) \\
                     &= 2(b_{l+m} - b_{m-l}).
\end{align*}
\item[(iii)]
\begin{align*}
    \qquad [c_l, c_m] &= [t^lh - t^{-l}h, t^mh - t^{-m}h] = 0.
\end{align*}
\end{enumerate}
\end{proof}

\begin{lem} \label{IsomChev} The Onsager algebra is isomorphic to the Lie subalgebra of $\mathcal{L}$  fixed by the Chevalley involution; i.e., $\mathcal{O} \cong \mathcal{L}^{\omega}$.
\end{lem}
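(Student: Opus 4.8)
The plan is to exhibit an explicit isomorphism by matching the distinguished bases of the two algebras. I would define a linear map $\varphi \colon \mathcal{O} \to \mathcal{L}^{\omega}$ on the basis $\{A_m, G_l \mid m \in \mathbb{Z},\ l \in \mathbb{N}_+\}$ of $\mathcal{O}$ (Definition \ref{OnsDefOrig}) by
\[
\varphi(A_m) = b_m \quad (m \in \mathbb{Z}), \qquad \varphi(G_l) = \frac{1}{2} c_l \quad (l \in \mathbb{N}_+),
\]
and extend linearly; this is well defined since $\{A_m, G_l\}$ is a basis. The first point is that $\varphi$ is automatically a linear isomorphism: by Proposition \ref{LwBasis} the set $\{c_l \mid l \in \mathbb{N}_+\} \cup \{b_m \mid m \in \mathbb{Z}\}$ is a basis of $\mathcal{L}^{\omega}$, hence so is $\{\frac{1}{2}c_l \mid l \in \mathbb{N}_+\} \cup \{b_m \mid m \in \mathbb{Z}\}$, and $\varphi$ carries the chosen basis of $\mathcal{O}$ bijectively onto it.

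It then remains to check that $\varphi$ is a Lie algebra homomorphism, and since the bracket is bilinear it suffices to verify $\varphi([x,y]) = [\varphi(x),\varphi(y)]$ for $x,y$ ranging over basis elements; equivalently, that $\varphi$ transports each of the Onsager relations (\ref{Onsrel1})--(\ref{Onsrel3}) to the corresponding relation of Theorem \ref{ThmBasisRels}. For (\ref{Onsrel1}) with $l > m$ one computes $\varphi([A_l,A_m]) = \varphi(2G_{l-m}) = c_{l-m} = [b_l,b_m]$ by Theorem \ref{ThmBasisRels}(i); the cases $l = m$ and $l < m$ reduce to this one by antisymmetry together with the conventions $G_0 = 0$, $G_{-l} = -G_l$ (Remark \ref{rmkG-m}) and $c_0 = 0$, $c_{-l} = -c_l$ (Proposition \ref{LwBasis}). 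For (\ref{Onsrel2}), $\varphi([G_l,A_m]) = \varphi(A_{m+l} - A_{m-l}) = b_{m+l} - b_{m-l} = [\tfrac{1}{2}c_l, b_m] = [\varphi(G_l),\varphi(A_m)]$, where the last equality is Theorem \ref{ThmBasisRels}(ii) after cancelling its factor $2$ against the $\tfrac{1}{2}$. For (\ref{Onsrel3}), $\varphi([G_l,G_m]) = 0 = \tfrac{1}{4}[c_l,c_m] = [\varphi(G_l),\varphi(G_m)]$ by Theorem \ref{ThmBasisRels}(iii). Hence $\varphi$ is a bijective Lie algebra homomorphism, i.e. an isomorphism.

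With Proposition \ref{LwBasis} and Theorem \ref{ThmBasisRels} already in hand the argument is essentially bookkeeping; the only point that needs a little care is the normalization $G_l \mapsto \tfrac{1}{2}c_l$ (rather than $G_l \mapsto c_l$), which is forced because the factor $2$ sits in different places in the Onsager relations and in Theorem \ref{ThmBasisRels}, and the matching of the sign and zero conventions on negative indices so that (\ref{Onsrel1}) and (\ref{Onsrel2}) hold uniformly in $l,m$. An alternative route would be to exhibit two elements of $\mathcal{L}^{\omega}$ satisfying the Dolan-Grady relations and invoke Proposition \ref{IsomDG}, but the direct basis comparison above is shorter and more transparent.
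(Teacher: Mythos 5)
Your proposal is correct and follows essentially the same route as the paper: the paper's proof defines exactly the map $A_m \mapsto b_m$, $G_l \mapsto \tfrac{1}{2}c_l$, verifies the three Onsager relations against Theorem \ref{ThmBasisRels}, and concludes bijectivity from the basis-to-basis correspondence of Proposition \ref{LwBasis}. Your added remarks on the negative-index and $l=m$ conventions just make explicit what the paper leaves implicit via Remark \ref{rmkG-m} and $c_{-l}=-c_l$.
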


\begin{proof} Consider the $k$-linear map $\gamma : \mathcal{O} \rightarrow \mathcal{L}^{\omega}$ defined by 
\begin{align}
    A_m \longmapsto b_m \qquad \qquad G_l \longmapsto \frac{1}{2}c_l \label{involutionhom}
\end{align}
for $m \in \mathbb{Z}$ and $l \in \mathbb{N}_+$. It follows from Theorem \ref{ThmBasisRels} that 
\begin{align*}
    [\gamma(A_l),\gamma(A_m)] &= [b_l,b_m] = c_{l-m} = 2\left(\frac{1}{2}c_{l-m}\right)\\
    													&= 2\gamma(G_{l-m}) = \gamma(2G_{l-m})\\
    													&= \gamma([A_l,A_m]), \\
    [\gamma(G_l),\gamma(A_m)] &= \left[\frac{1}{2}c_l,b_m\right] = b_{m+l} - b_{m-l}\\
    													&= \gamma(A_{m+l}) - \gamma(A_{m-l}) = \gamma(A_{m+l} - A_{m-l})\\
    													&= \gamma([G_l,A_m]), \\
    [\gamma(G_l),\gamma(G_m)] &= \left[\frac{1}{2}c_l,\frac{1}{2}c_m\right] \\
    													&= 0 = \gamma(0)\\
    													&= \gamma([G_l,G_m]).
\end{align*}
Hence, $\gamma$ as defined in (\ref{involutionhom}) is a homomorphism. Since $\{A_m, G_l | m \in \mathbb{Z}, l \in \mathbb{N}_+\}$ forms a basis of $\mathcal{O}$ and $\{b_m, \frac{1}{2}c_l | m \in \mathbb{Z}, l \in \mathbb{N}_+\}$ forms a basis of $\mathcal{L}^{\omega}$, (\ref{involutionhom}) defines a bijection between $\mathcal{O}$ and $\mathcal{L}^{\omega}$. This brings us to the conclusion that $\gamma$ defines an isomorphism; i.e., $\mathcal{O} \cong \mathcal{L}^{\omega}$.
\end{proof}

\begin{cor} \label{b0b1gen} $b_0 = e + f$ and $b_1 = te + t^{-1}f$ generate $\mathcal{L}^{\omega}$ as a Lie algebra. 
\end{cor}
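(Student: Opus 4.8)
The plan is to transport the generation statement for $\mathcal{O}$ through the isomorphism $\gamma$ of Lemma \ref{IsomChev}. Recall that inside the proof of Proposition \ref{IsomDG} (in establishing the surjectivity of $\zeta$) we showed that the subalgebra $N \subseteq \mathcal{O}$ generated by $A_0$ and $A_1$ is all of $\mathcal{O}$: from $[A_1,A_0] = 2G_1$ and $[G_1,A_0] = A_1 - A_{-1}$ one gets $G_1, A_{-1} \in N$, and then by induction on $l$, using $[A_{l-1},A_{-1}] = 2G_l = -2G_{-l}$, $[G_1,A_{l-1}] = A_l - A_{l-2}$ and $[G_l,A_0] = A_l - A_{-l}$, one obtains $A_l, A_{-l}, G_l, G_{-l} \in N$ for all $l \in \mathbb{N}_+$. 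Since $\gamma$ is a Lie algebra isomorphism with $\gamma(A_0) = b_0$ and $\gamma(A_1) = b_1$, the Lie subalgebra of $\mathcal{L}^{\omega}$ generated by $b_0$ and $b_1$ equals $\gamma(N) = \gamma(\mathcal{O}) = \mathcal{L}^{\omega}$, which is the claim.

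Equivalently, one may argue directly inside $\mathcal{L}^{\omega}$ using the bracket relations of Theorem \ref{ThmBasisRels}, which is perhaps cleaner to write out. Let $N'$ be the Lie subalgebra generated by $b_0$ and $b_1$. By (i), $[b_1,b_0] = c_1 \in N'$; by (ii), $[c_1,b_0] = 2(b_1 - b_{-1})$, so $b_{-1} = b_1 - \tfrac12[c_1,b_0] \in N'$. Proceeding by induction on $l \in \mathbb{N}_+$, suppose $b_m \in N'$ for $|m| \leq l-1$ and $c_m, c_{-m} \in N'$ for $1 \leq m \leq l-1$. Then $[b_{l-1},b_{-1}] = c_l \in N'$ by (i); $[c_1,b_{l-1}] = 2(b_l - b_{l-2})$ by (ii), so $b_l = b_{l-2} + \tfrac12[c_1,b_{l-1}] \in N'$; $[c_l,b_0] = 2(b_l - b_{-l})$ by (ii), so $b_{-l} = b_l - \tfrac12[c_l,b_0] \in N'$; and $c_{-l} = -c_l \in N'$. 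Hence $N'$ contains the full basis $\{c_l \mid l \in \mathbb{N}_+\} \cup \{b_m \mid m \in \mathbb{Z}\}$ of $\mathcal{L}^{\omega}$ exhibited in Proposition \ref{LwBasis}, so $N' = \mathcal{L}^{\omega}$.

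There is essentially no real obstacle: the content is already contained in the proof of Proposition \ref{IsomDG} together with the isomorphism of Lemma \ref{IsomChev}, and the direct verification only re-uses the relations computed in Theorem \ref{ThmBasisRels}. The one point to be careful about is simply recording that the generation argument for $\mathcal{O}$ was carried out within that earlier proof, so that it can legitimately be invoked here; otherwise, the self-contained induction in the previous paragraph removes any dependence on that remark.
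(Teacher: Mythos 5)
Your first argument is essentially the paper's proof: the paper simply composes the isomorphisms $\zeta$ and $\gamma$ and uses that $A$, $B$ generate $\mathcal{OA}$, which amounts to the same transport-of-generation you describe (the generation of $\mathcal{O}$ by $A_0$, $A_1$ being exactly what the surjectivity step of Proposition \ref{IsomDG} established). Your second, self-contained induction inside $\mathcal{L}^{\omega}$ using Theorem \ref{ThmBasisRels} is also correct and has the minor virtue of not depending on an argument buried inside another proof, but it buys nothing essential beyond that. Both routes are fine.
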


\begin{proof} Consider the isomorphisms $\gamma$ from Lemma \ref{IsomChev} and $\zeta$ from Proposition \ref{IsomDG}:
\begin{align*}
	\mathcal{OA} 	&\longrightarrow \mathcal{O} 	~ \longrightarrow \mathcal{L}^{\omega}, \\
			A					&\longmapsto			A_0					~ \longmapsto			b_0, \\
			B					&\longmapsto			A_1					~ \longmapsto			b_1. \\
\end{align*}
Since $A, B$ generate $\mathcal{OA}$, then $(\gamma \circ \zeta)(A) = b_0$ and $(\gamma \circ \zeta)(B) = b_1$ generate $\mathcal{L}^{\omega}$.
\end{proof}

\section{Closed Ideals of the Onsager Algebra}
\label{sec:ClosedIdealsOfTheOnsagerAlgebra}

By exploiting the realization of the Onsager algebra as a fixed subalgebra of the $\mathfrak{sl}_2$-loop algebra, one can classify its closed ideals. 

We begin by noting that an element $X$ of the $\mathfrak{sl}_2$-loop algebra $\mathcal{L}$ is an element of $\mathcal{L}^{\omega}$ (and hence of the Onsager algebra) if it satisfies the criterion developed below.

If $X \in \mathcal{L}$ then, for some $p(t),q(t),r(t) \in k[t,t^{-1}]$, \[X = p(t)e + q(t)f + r(t)h.\]
We apply the Chevalley involution to get \[ \omega(X) = p(t^{-1})f + q(t^{-1})e - r(t^{-1})h.\]
So, 
\begin{align*}
	X \in \mathcal{L}^{\omega} \Leftrightarrow \begin{cases} q(t) = p(t^{-1}),\\ r(t) = -r(t^{-1}). \end{cases}
\end{align*}
Hence giving us the criterion:
\begin{align}
	X \in \mathcal{L}^{\omega} ~ \Leftrightarrow ~ X = p(t)e + p(t^{-1})f + r(t)h ~ ~ \text{with } r(t) + r(t^{-1}) = 0. \label{FixedElemCrit}
\end{align}

\begin{lem} A polynomial $r(t) \in k[t,t^{-1}]$ such that $r(t) + r(t^{-1}) = 0$ can be uniquely written in the form $r(t) = r_+(t) - r_+(t^{-1})$ with $r_+(t) \in k[t]$, $r(0) = 0$, i.e., $r_+(t) = t k[t]$.
\end{lem}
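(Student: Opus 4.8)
The plan is to reduce the statement to the $\mathbb{Z}$-grading of $k[t,t^{-1}]$ by powers of $t$. First I would record the canonical decomposition of a Laurent polynomial $r(t) = \sum_{n \in \mathbb{Z}} a_n t^n$ (a finite sum) as $r = r_+ + a_0 + r_-$, where $r_+(t) = \sum_{n>0} a_n t^n \in tk[t]$, the middle term is the constant $a_0 \in k$, and $r_-(t) = \sum_{n<0} a_n t^n \in t^{-1}k[t^{-1}]$. This decomposition is unique because the monomials $t^n$, $n \in \mathbb{Z}$, form a $k$-basis of $k[t,t^{-1}]$.

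For existence I would apply the substitution $t \mapsto t^{-1}$, which sends $r_+$ into $t^{-1}k[t^{-1}]$, fixes $a_0$, and sends $r_-$ into $tk[t]$; hence $r(t) + r(t^{-1})$ splits as $\bigl(r_+(t) + r_-(t^{-1})\bigr) + 2a_0 + \bigl(r_-(t) + r_+(t^{-1})\bigr)$, with the three summands lying in $tk[t]$, in $k$, and in $t^{-1}k[t^{-1}]$ respectively. Setting this equal to $0$ and invoking uniqueness of the decomposition forces $a_0 = 0$ (this is the sense in which the constant term $r(0)$ vanishes) and $r_-(t) = -r_+(t^{-1})$. Therefore $r(t) = r_+(t) + r_-(t) = r_+(t) - r_+(t^{-1})$ with $r_+ \in tk[t]$, as required.

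For uniqueness of $r_+$, suppose $r_+(t) - r_+(t^{-1}) = s_+(t) - s_+(t^{-1})$ with $r_+, s_+ \in tk[t]$. Then $p := r_+ - s_+ \in tk[t]$ satisfies $p(t) = p(t^{-1})$; but $p(t)$ involves only strictly positive powers of $t$ while $p(t^{-1})$ involves only strictly negative powers, so by the same uniqueness of monomial expansions both sides are $0$, whence $r_+ = s_+$.

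The whole argument is bookkeeping with the grading of $k[t,t^{-1}]$, so there is no genuine obstacle; the only point needing a little care is to keep the three graded pieces $tk[t]$, $k$, $t^{-1}k[t^{-1}]$ straight under the involution $t \mapsto t^{-1}$, and to read ``$r(0) = 0$'' as the vanishing of the constant term of the Laurent polynomial $r$.
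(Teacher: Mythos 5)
Your argument is correct and is essentially the same as the paper's: both compare coefficients of $r(t)=\sum_n a_n t^n$ in the monomial basis to deduce $a_{-i}=-a_i$ (hence $a_0=0$) and then take $r_+(t)=\sum_{n>0}a_n t^n$. In fact you go slightly further than the paper, which asserts uniqueness in the statement but never proves it; your short argument that $p(t)=p(t^{-1})$ with $p\in tk[t]$ forces $p=0$ fills that small gap.
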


\begin{proof} Let $r(t) \in k[t,t^{-1}]$. By allowing zero coefficients, \[r(t) = \sum^n_{i=-n} a_it^i\] for some $n \in \mathbb{Z}^+$. Then
\begin{align*}
	r(t) = -r(t^{-1})	&\Leftrightarrow \sum^n_{i=-n} a_it^i = \sum^n_{i=-n} -a_it^{-i} \\
										&\Leftrightarrow \sum^n_{i=-n} (a_i + a_{-i})t^i = 0\\
										&\Leftrightarrow a_i = -a_{-i} ~ \text{for } i \in [-n,n].
\end{align*}
In particular, note that $a_0 = 0$. Now, if we define \[r_+(t) = \sum^n_{i=1} a_it^i,\] then
\begin{align*}
	r(t) 	&= r_+(t) + \sum^{-1}_{i=-n} a_it^i = r_+(t) - \sum^n_{i=1} a_it^{-i} = r_+(t) - r_+(t^{-1}).
\end{align*}
\end{proof}

\begin{defn}[Reciprocal Polynomial] Let $P(t)$ be a non-trivial monic polynomial in $k[t]$. $P(t)$ is called a \emph{reciprocal polynomial} if $P(t) = \pm t^dP(t^{-1})$, where $d$ is the degree of $P(t)$.
\end{defn}

The reason for this terminology is the following. If we write $P(t) = a_0 + a_1t + \dots + a_{d-1}t^{d-1} + a_dt^d$ with $a_d=1$, then $P(t)$ is reciprocal if and only if $a_i = a_{d-i}$ for $0 \leq i \leq d$. For example, $1$ is the only reciprocal polynomial of degree $0$ and $t+1$ and $t-1$ are the only reciprocal polynomial of degree $1$. The reciprocal polynomials of degree 2 are the polynomials $t^2 -1$ and $t^2 + a_1t + 1$. 

\begin{rmk} \label{RecPolProp1} If $P(t)$ is a reciprocal polynomial of degree $d$, then $P(t)k[t,t^{-1}] = P(t^{-1})k[t,t^{-1}]$ since $t^d$ is a unit in $k[t,t^{-1}]$.
\end{rmk}

\begin{defn}[Divisible] (\cite{DateERoanS00}) Let $P(t)$ be a non-trivial polynomial in $k[t]$. An element $X$ of $\mathcal{L}^{\omega}$ is said to be \emph{divisible by} $P(t)$, denoted $P(t) \mid X$, if $X = P(t)\alpha$ with $\alpha \in \mathcal{L}$. The notation $P(t) \mid X_1, X_2, \dots, X_n$ will denote the fact that $X_1, X_2, \dots, X_n$ are divisible by $P(t)$.
\begin{align*}
\text{Define } I_{P(t)} &:= \{X \in \mathcal{L}^{\omega} ~ | ~ P(t) \mid X\} \\
												&= \{P(t)\alpha \in \mathcal{L}^{\omega} ~ | ~ \alpha \in \mathcal{L} \} 	\\
												&= \{p(t)e + p(t^{-1})f + r(t)h \in \mathcal{L}^{\omega} ~ | ~ p(t),p(t^{-1}),r(t) \in P(t)k[t,t^{-1}]\}.
\end{align*}
\end{defn}

\begin{lem} \label{IPtIdeal} $I_{P(t)}$ is an ideal of $\mathcal{L}^{\omega}$.
\end{lem}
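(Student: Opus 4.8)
The plan is to verify directly that $I_{P(t)}$ is a subspace of $\mathcal{L}^{\omega}$ closed under bracketing with arbitrary elements of $\mathcal{L}^{\omega}$. That it is a subspace is immediate: if $X = P(t)\alpha$ and $Y = P(t)\beta$ with $\alpha,\beta \in \mathcal{L}$, then $X + \lambda Y = P(t)(\alpha + \lambda\beta)$ for $\lambda \in k$, and $\alpha + \lambda\beta \in \mathcal{L}$ since $\mathcal{L}$ is a $k$-vector space; moreover $X+\lambda Y$ lies in $\mathcal{L}^{\omega}$ because $\mathcal{L}^{\omega}$ is itself a subspace. So the content of the lemma is the ideal property.

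For the ideal property, let $X \in I_{P(t)}$ and $Y \in \mathcal{L}^{\omega}$. Write $X = P(t)\alpha$ with $\alpha \in \mathcal{L}$. First, $[X,Y] \in \mathcal{L}^{\omega}$ automatically, since $\mathcal{L}^{\omega}$ is a Lie subalgebra of $\mathcal{L}$ (it is the fixed-point subalgebra of the involution $\omega$) and both $X$ and $Y$ lie in it. It remains to check that $P(t) \mid [X,Y]$. Using the bracket on $\mathcal{L} = k[t,t^{-1}] \otimes \mathfrak{sl}_2$, if we write $X = \sum_i p_i(t)\,x_i$ and $Y = \sum_j q_j(t)\,y_j$ with $x_i, y_j \in \mathfrak{sl}_2$, then
\[
[X,Y] = \sum_{i,j} p_i(t)\,q_j(t)\,[x_i,y_j].
\]
Since every $p_i(t)$ is divisible by $P(t)$ in $k[t,t^{-1}]$ (this is exactly the statement $X = P(t)\alpha$, read coefficient-wise in the basis of $\mathfrak{sl}_2$), each product $p_i(t)q_j(t)$ is divisible by $P(t)$, hence $[X,Y] = P(t)\gamma$ for some $\gamma \in \mathcal{L}$. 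Therefore $[X,Y] \in I_{P(t)}$.

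The only mild subtlety — and the step I would be most careful with — is the bookkeeping that "$X = P(t)\alpha$ with $\alpha \in \mathcal{L}$" really does mean that each of the three coefficient polynomials of $X$ (the coefficients of $e$, $f$, $h$) lies in $P(t)k[t,t^{-1}]$, which is what licenses pulling $P(t)$ out of the whole bracket; this is the content already recorded in the displayed description of $I_{P(t)}$ in the definition, so one can simply invoke it. Everything else is the routine computation above together with the two facts established earlier in the chapter: $\mathcal{L}^{\omega}$ is a Lie subalgebra of $\mathcal{L}$, and the bracket on $\mathcal{L}$ multiplies the polynomial parts. No case analysis on whether $P(t)$ is reciprocal is needed for this lemma — that only becomes relevant when one asks which ideals $I_{P(t)}$ are closed.
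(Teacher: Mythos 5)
Your proof is correct and follows essentially the same route as the paper: both check the subspace property, use that $\mathcal{L}^{\omega}$ is a Lie subalgebra of $\mathcal{L}$ to get $[X,Y]\in\mathcal{L}^{\omega}$, and pull $P(t)$ out of the bracket via $[P(t)\alpha,Y]=P(t)[\alpha,Y]$ (which you merely spell out coefficient-wise). Your closing remarks on the bookkeeping and on reciprocity being irrelevant here are accurate but not needed.
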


\begin{proof} It is clear that $I_{P(t)}$ is a subspace of $\mathcal{L}^{\omega}$. Next, consider an arbitrary $X = P(t)\alpha \in I_{P(t)}$. Since $\mathcal{L}^{\omega}$ is a subalgebra of $\mathcal{L}$, then $[X,Y] \in \mathcal{L}^{\omega}$ for any $Y \in \mathcal{L}^{\omega}$. Furthermore, for any $Y \in \mathcal{L}^{\omega}$, \[[X,Y]	= [P(t)\alpha, Y] = P(t)[\alpha,Y] \] is divisible by $P(t)$. Hence, $[X,Y] \in I_{P(t)}$ for any $Y \in \mathcal{L}^{\omega}$ and so $I_{P(t)}$ is indeed an ideal of $\mathcal{L}^{\omega}$.
\end{proof}

\begin{rmk} Lemma \ref{IPtIdeal} is also clear from the description of $I_{P(t)} = \mathcal{L}^{\omega} \cap P(t)\mathcal{L}$ and the fact that $P(t)\mathcal{L}$ is an ideal of $\mathcal{L}$. 
\end{rmk}


\begin{rmk} If $P(t)$ is a reciprocal polynomial then, by Remark \ref{RecPolProp1},
\begin{align*}
	I_{P(t)} = \{p(t)e + p(t^{-1})f + r(t)h \in \mathcal{L}^{\omega} ~ | ~ p(t),r(t) \in P(t)k[t,t^{-1}]\}. 
\end{align*}
\end{rmk}

\begin{lem} \label{InterIdealsLCM} If $P(t)$ and $Q(t)$ are two reciprocal polynomials, then $I_{P(t)} \cap I_{Q(t)} = I_{\lcm(P(t),Q(t))}$, where $\lcm$ denotes the least common multiple in $k[t]$. 
\end{lem}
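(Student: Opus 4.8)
The plan is to pass from the Laurent-polynomial picture of $\mathcal{L}^{\omega}$ to an elementary identity of ideals in the ring $R := k[t,t^{-1}]$. First I would record the uniform description $I_{N(t)} = \mathcal{L}^{\omega} \cap N(t)\mathcal{L}$, valid for \emph{any} non-trivial $N(t) \in k[t]$ directly from the definition of divisibility (this is the content of the remark following Lemma \ref{IPtIdeal}, and it needs no reciprocity hypothesis on $N(t)$). Applying this to $P(t)$, $Q(t)$, and to $\lcm(P(t),Q(t))$ reduces the claim to the single equality
\[
 P(t)\mathcal{L} \cap Q(t)\mathcal{L} = \lcm(P(t),Q(t))\,\mathcal{L}
\]
inside $\mathcal{L}$, after which one simply intersects both sides with $\mathcal{L}^{\omega}$.

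Next I would reduce this to a statement about ideals of $R$. Since $\{e,f,h\}$ is a $k$-basis of $\mathfrak{sl}_2$, the loop algebra $\mathcal{L} = R \otimes_k \mathfrak{sl}_2$ is a free $R$-module with that basis, so for any $N(t) \in R$ one has $N(t)\mathcal{L} = (N(t)R)\,e \oplus (N(t)R)\,f \oplus (N(t)R)\,h$. The decomposition of an element of $\mathcal{L}$ along $e,f,h$ is unique, hence an element lies in $P(t)\mathcal{L}\cap Q(t)\mathcal{L}$ if and only if each of its three coefficients lies in $P(t)R \cap Q(t)R$. Thus the displayed identity is equivalent to $P(t)R \cap Q(t)R = \lcm(P(t),Q(t))\,R$ in $R$.

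For this ring identity, the inclusion $\supseteq$ is immediate since $P(t)$ and $Q(t)$ both divide $\lcm(P(t),Q(t))$ already in $k[t] \subseteq R$. For $\subseteq$ I would use that $R$ is a PID (a localization of $k[t]$) whose units are exactly the monomials $ct^{n}$ with $c \in k^{\times}$, $n \in \mathbb{Z}$, together with the key observation that a monic reciprocal polynomial has constant term equal to its leading coefficient $1$, hence is coprime to $t$ in $k[t]$. From $\gcd(t,P(t))=1$ one deduces $P(t)R \cap k[t] = P(t)k[t]$ (clear denominators in any relation $g = P(t)u$ with $g \in k[t]$, $u \in R$, and peel off factors of $t$), and likewise for $Q(t)$. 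Now, given $f \in P(t)R \cap Q(t)R$, choose $N$ with $t^{N}f \in k[t]$; then $t^{N}f \in P(t)k[t]\cap Q(t)k[t] = \lcm(P(t),Q(t))\,k[t]$ by the standard fact for principal ideals in the PID $k[t]$, whence $f \in \lcm(P(t),Q(t))\,R$. Assembling the three reductions yields $I_{P(t)} \cap I_{Q(t)} = I_{\lcm(P(t),Q(t))}$.

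The only point requiring genuine care is the last one: the $\lcm$ in the statement is computed in $k[t]$, so one must check that dividing by $P(t)$ or $Q(t)$ in the larger ring $R$ cannot introduce spurious factors of $t$. This is exactly what coprimality of a reciprocal polynomial with $t$ provides, and it is the reason the hypothesis ``reciprocal'' (here equivalently ``nonzero constant term'') is needed; everything else is formal module- and PID-bookkeeping.
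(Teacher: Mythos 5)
Your proof is correct and follows essentially the same route as the paper: both reduce, coefficientwise along the basis $e,f,h$, to the identity $P(t)k[t,t^{-1}] \cap Q(t)k[t,t^{-1}] = \lcm(P(t),Q(t))\,k[t,t^{-1}]$. The paper simply asserts this last equality, whereas you justify it via the coprimality of a reciprocal polynomial with $t$ (note only that the constant term is $\pm 1$ rather than $1$, and that the reciprocity hypothesis, while used, is not strictly necessary for the conclusion since $t$ is a unit in $k[t,t^{-1}]$).
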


\begin{proof} Let $R(t) = \lcm(P(t),Q(t))$. Let $X \in I_{R(t)}$. Then
	\begin{align*}
		& ~ R(t) \mid X \\
		\Rightarrow & ~ Q(t) \mid X \text{ and } P(t) \mid X \\
		\Rightarrow & ~ X \in I_{P(t)} \cap I_{Q(t)}.
	\end{align*}
Thus, $I_{R(t)} \subseteq I_{P(t)} \cap I_{Q(t)}$. Let $Y = p(t)e + p(^{-1})f + r(t)h \in I_{P(t)} \cap I_{Q(t)}$. Then
	\begin{align*}
							 	& ~ P(t) \mid p(t),r(t) \text{ and } Q(t) \mid p(t),r(t) \\
		\Rightarrow & ~ p(t),r(t) \in P(t)k[t,t^{-1}] \cap Q(t)k[t,t^{-1}] = R(t)k[t,t^{-1}] \\
		\Rightarrow & ~ Y \in I_{R(t)}.
	\end{align*}
Thus, $I_{P(t)} \cap I_{Q(t)} \subseteq	I_{R(t)}$.
\end{proof}	

In particular, if $Q(t) \mid P(t)$ then it follows from Lemma \ref{InterIdealsLCM} that $I_{P(t)} \subseteq I_{Q(t)}$. Hence, in this case, there is a canonical projection 
\begin{align*}
	\mathcal{L}^{\omega}/I_{P(t)} \longrightarrow \mathcal{L}^{\omega}/I_{Q(t)}.
\end{align*}

\begin{lem} \emph{(\cite[Lemma 1]{DateERoanS00})} \label{DRLem1} Let $P_j(t)$, $1 \leq j \leq J$, be pairwise relatively prime reciprocal polynomials and $P(t) := \prod_{j=1}^J P_j(t)$. Then the canonical projections give rise to an isomorphism of Lie algebras:
\begin{align*}
	\mathcal{L}^{\omega}/I_{P(t)} \longrightarrow \prod_{j=1}^J \mathcal{L}^{\omega}/I_{P_j(t)} ,
\end{align*}
where the Lie algebra on the right hand side is the direct product algebra.
\end{lem}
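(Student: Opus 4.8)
The plan is to prove the isomorphism by invoking the Chinese Remainder Theorem for the polynomial ring $k[t,t^{-1}]$ and then transporting it through the tensor description $\mathcal{L}^{\omega} = (k[t,t^{-1}] \otimes \mathfrak{sl}_2)^\Gamma$. First I would set $\phi : \mathcal{L}^{\omega}/I_{P(t)} \to \prod_{j=1}^J \mathcal{L}^{\omega}/I_{P_j(t)}$ to be the map induced by the canonical projections $\mathcal{L}^{\omega}/I_{P(t)} \to \mathcal{L}^{\omega}/I_{P_j(t)}$, which are well defined since each $P_j(t) \mid P(t)$ implies $I_{P(t)} \subseteq I_{P_j(t)}$ (noted after Lemma \ref{InterIdealsLCM}). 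Each projection is a Lie algebra homomorphism, so $\phi$ is one as well, and one checks $\phi$ is well defined on the quotient. It remains to show $\phi$ is bijective.

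For injectivity, I would argue that $\ker \phi = \bigcap_{j=1}^J I_{P_j(t)} / I_{P(t)}$, so it suffices to show $\bigcap_{j=1}^J I_{P_j(t)} = I_{P(t)}$. Since the $P_j(t)$ are pairwise relatively prime, we have $\lcm(P_1(t),\dots,P_J(t)) = \prod_{j=1}^J P_j(t) = P(t)$, so an induction on $J$ using Lemma \ref{InterIdealsLCM} (together with the fact that $\lcm$ of pairwise coprime polynomials is their product, and that a product of coprime reciprocal polynomials is again reciprocal) gives $\bigcap_{j=1}^J I_{P_j(t)} = I_{P(t)}$. Hence $\phi$ is injective.

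For surjectivity, I would use the tensor realization. Write $\mathcal{L}^{\omega} = (k[t,t^{-1}] \otimes \mathfrak{sl}_2)^\Gamma$ and note $I_{P_j(t)} = (P_j(t)k[t,t^{-1}] \otimes \mathfrak{sl}_2)^\Gamma = \mathcal{L}^{\omega} \cap (P_j(t)\mathcal{L})$ (using the reciprocal-polynomial description of $I_{P_j(t)}$). Given a target element $(\bar X_1,\dots,\bar X_J)$ with $X_j \in \mathcal{L}^{\omega}$, I want a single $X \in \mathcal{L}^{\omega}$ with $X \equiv X_j \pmod{I_{P_j(t)}}$ for all $j$. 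Writing $X_j = p_j(t)e + p_j(t^{-1})f + r_j(t)h$ with $r_j(t) + r_j(t^{-1}) = 0$, I would apply the Chinese Remainder Theorem in $k[t]$ (or $k[t,t^{-1}]$, which is a localization and hence still a PID) to the pairwise coprime $P_j(t)$ to find $p(t) \in k[t,t^{-1}]$ with $p(t) \equiv p_j(t) \pmod{P_j(t)k[t,t^{-1}]}$ and similarly $r(t)$ with $r(t) \equiv r_j(t) \pmod{P_j(t)k[t,t^{-1}]}$ for all $j$. The subtlety is that $X := p(t)e + p(t^{-1})f + r(t)h$ must lie in $\mathcal{L}^{\omega}$, i.e. we need $r(t) + r(t^{-1}) = 0$; this I would arrange by replacing $r(t)$ with $\tfrac12(r(t) - r(t^{-1}))$, which still satisfies the congruences because each $P_j(t)$ is reciprocal (so $P_j(t)k[t,t^{-1}] = P_j(t^{-1})k[t,t^{-1}]$ by Remark \ref{RecPolProp1}) and each $r_j$ is already anti-symmetric under $t \mapsto t^{-1}$. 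Then $X \in \mathcal{L}^{\omega}$, and $X - X_j$ has all coefficients in $P_j(t)k[t,t^{-1}]$, so $X - X_j \in I_{P_j(t)}$, giving $\phi(\bar X) = (\bar X_1,\dots,\bar X_J)$.

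The main obstacle I anticipate is the surjectivity step, specifically the bookkeeping needed to guarantee that the element produced by the Chinese Remainder Theorem actually lies in the fixed subalgebra $\mathcal{L}^{\omega}$ and not merely in $\mathcal{L}$ — this is precisely where the reciprocal-polynomial hypothesis on the $P_j(t)$ is essential, and it must be used to symmetrize the $h$-component without destroying the congruences. Injectivity and the homomorphism property are routine once Lemma \ref{InterIdealsLCM} is in hand.
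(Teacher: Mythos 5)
Your proposal is correct and follows essentially the same route as the paper: injectivity via Lemma \ref{InterIdealsLCM} together with the fact that the lcm of pairwise coprime polynomials is their product, and surjectivity via the Chinese Remainder Theorem followed by symmetrizing the $h$-coefficient as $\tfrac12\left(r(t)-r(t^{-1})\right)$, using that the $P_j(t)$ are reciprocal to preserve the congruences. The only cosmetic difference is that the paper clears denominators by a power $t^N$ so as to apply the CRT in $k[t]$, whereas you apply it directly in the localization $k[t,t^{-1}]$; both are valid.
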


\begin{proof} We begin by showing that \[\bigcap_{j=1}^J (P_j(t)) = (\prod_{j=1}^J P_j(t)),\] where $(Q(t))$ denotes the ideal of $k[t]$ generated by a polynomial $Q(t) \in k[t]$. We proceed by induction on $J$. The $J=1$ case is trivial. In the case of $J=2$, since $P_1(t)$ and $P_2(t)$ are relatively prime, we have that
\begin{align*}
	(P_1(t)) \cap (P_2(t)) 	&= (\lcm(P_1(t),P_2(t)))\\
													&= (P_1(t)P_2(t)).
\end{align*}
Next, we assume that \[\bigcap_{j=1}^K (P_j(t)) = (\prod_{j=1}^K P_j(t))\] for $K < J$ and proceed to showing the case $K=J$. Let $A = P_1(t)$ and $B = \prod_{j=2}^J P_j(t)$. By induction hypothesis, we have that $(B) = \bigcap_{j=2}^J (P_j(t))$. It is known that since $A = P_1(t)$ and $P_j(t)$ for $2 \leq j \leq J$ are relatively prime, then $A$ and $B$ are also relatively prime. Hence we have that \[\bigcap_{j=1}^J(P_j(t)) = (A)\cap(B) = (\lcm(A,B)) = (AB) = (\prod_{j=1}^J P_j(t)),\] which proves our claim. With this, we note that injectivity simply follows from Lemma \ref{InterIdealsLCM} and the fact that $P(t) := \prod_{j=1}^J P_j(t)$ where $P_j(t)$, $1 \leq j \leq J$, are pairwise relatively prime reciprocal polynomials. So, it remains to show surjectivity.

For $X_j = p_j(t)e + p_j(t^{-1})f + q_j(t)h \in \mathcal{L}^{\omega}$, for $1 \leq j \leq J$, let $N$ be a positive integer such that $t^Np_j(t)$ and $t^Nq_j(t)$ are all polynomials in $t$. 

By the Chinese remainder theorem, there exist polynomials $\tilde{p}(t)$, $\tilde{q}(t) \in k[t]$ such that the following relations hold in $k[t]$:
\begin{align*}
	\tilde{p}(t) \equiv t^Np_j(t), \quad \tilde{q}(t) \equiv t^Nq_j(t) \qquad (\Mod P_j(t)) \quad \forall j.
\end{align*}
Define the element $X$ of $\mathcal{L}^{\omega}$ by
\[ X := p(t)e + p(t^{-1})f + q(t)h, \]
where 
\[ p(t) = \frac{\tilde{p}(t)}{t^N}, ~ q(t) = \frac{1}{2}\left( \frac{\tilde{q}(t)}{t^N} - t^N\tilde{q}(t^{-1}) \right) \in k[t,t^{-1}]. \]

\noindent Note that $p(t) - p_j(t)$ and $q(t) - q_j(t)$ are divisible by $P_j(t)$ for all $j$:

\begin{align*}
	p(t) - p_j(t) &= \frac{\tilde{p}(t)}{t^N} - p_j(t) \\
								&\equiv \frac{t^Np_j(t)}{t^N} - p_j(t) ~ (\Mod P_j(t)) \\
								&= 0, \\
								\\
	q(t) - q_j(t) &= \frac{1}{2}\left( \frac{\tilde{q}(t)}{t^N} - t^N\tilde{q}(t^{-1}) \right) - q_j(t) \\
								&\equiv \frac{1}{2}\left( \frac{t^Nq_j(t)}{t^N} - t^Nt^{-N}q_j(t^{-1}) \right) - q_j(t) ~ (\Mod P_j(t)) \\
								&= \frac{1}{2}\left( q_j(t) - q_j(t^{-1}) \right) - q_j(t) \\
								&= \frac{1}{2}\left( q_j(t) + q_j(t) \right) - q_j(t) \\
								&= 0.
\end{align*}
Hence, $p(t) - p_j(t)$ and $q(t) - q_j(t)$ are in $I_{P_j(t)}$ and $X \equiv X_j ~ (\Mod I_{P_j(t)})$ for all $j$.

\end{proof}

\begin{lem} \emph{(\cite[Lemma 2]{DateERoanS00})} \label{DRLem2} Let $P(t)$ be a reciprocal polynomial and write 
\begin{align*}
	P(t) = (t-1)^L(t+1)^KP^*(t) \qquad L, K \geq 0 \qquad P^*(\pm1) \neq 0.
\end{align*}
Denote $\tilde{P}(t) := (t-1)^{2[L/2]}(t+1)^{2[K/2]}P^*(t)$, where $[r]$ stands for the integral part of a rational number $r$. Then 
\begin{align*}
	Z(I_{P(t)}) = \{p(t)e + p(t^{-1})f + q(t)h \in \mathcal{L}^{\omega} ~ | ~ \tilde{P}(t) \mid p(t), P(t) \mid q(t)\}.
\end{align*}
As a consequence, $I_{P(t)}$ is closed if and only if the zero-multiplicities of $P(t)$ at $t = \pm1$ are even. 
\end{lem}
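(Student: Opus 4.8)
The plan is to compute $Z(I_{P(t)})$ explicitly and then read off the last assertion.

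\emph{Step 1: reduction to the generators $b_0,b_1$.} For a fixed $X\in\mathcal{L}^{\omega}$, the set $\{Y\in\mathcal{L}^{\omega}~|~[X,Y]\in I_{P(t)}\}$ is a linear subspace, and it is in fact a Lie subalgebra: if $[X,Y_1],[X,Y_2]\in I_{P(t)}$, then by the Jacobi identity $[X,[Y_1,Y_2]]=[[X,Y_1],Y_2]-[[X,Y_2],Y_1]$, which lies in $I_{P(t)}$ because $I_{P(t)}$ is an ideal of $\mathcal{L}^{\omega}$ (Lemma \ref{IPtIdeal}). Since $b_0=e+f$ and $b_1=te+t^{-1}f$ generate $\mathcal{L}^{\omega}$ (Corollary \ref{b0b1gen}), it follows that $X\in Z(I_{P(t)})$ if and only if $[X,b_0]\in I_{P(t)}$ and $[X,b_1]\in I_{P(t)}$.

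\emph{Step 2: the divisibility conditions.} Writing a general element of $\mathcal{L}^{\omega}$ as $X=p(t)e+p(t^{-1})f+r(t)h$ with $r(t)+r(t^{-1})=0$ (by the criterion \eqref{FixedElemCrit}) and expanding in $\mathcal{L}$ with the $\mathfrak{sl}_2$ relations, one obtains
\[ [X,b_0]=2r(t)(e-f)+\bigl(p(t)-p(t^{-1})\bigr)h,\qquad [X,b_1]=2tr(t)e-2t^{-1}r(t)f+\bigl(t^{-1}p(t)-tp(t^{-1})\bigr)h. \]
By the description of $I_{P(t)}$ (divisibility of the $e$- and $h$-coefficients by $P(t)$) and since $k$ has characteristic zero, membership of these two elements in $I_{P(t)}$ is equivalent to: (a) $P(t)\mid r(t)$; (b) $P(t)\mid p(t)-p(t^{-1})$; (c) $P(t)\mid t^{-1}p(t)-tp(t^{-1})$. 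Using that $P(t)$ is reciprocal, the combination $p(t)-t^{2}p(t^{-1})=\bigl(p(t)-p(t^{-1})\bigr)-(t^{2}-1)p(t^{-1})$ shows $P(t)\mid(t^{2}-1)p(t)$; as $\gcd(P^{*}(t),t^{2}-1)=1$ this already forces $P^{*}(t)\mid p(t)$, together with $(t-1)^{L-1}\mid p(t)$ and $(t+1)^{K-1}\mid p(t)$ when $L,K\geq1$.

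\emph{Step 3: raising the exponents, and the converse.} The crucial point is that when $L$ is even and positive, condition (b) forces the stronger $(t-1)^{L}\mid p(t)$. To see this, write $p(t)=(t-1)^{L-1}\pi(t)$ with $\pi\in k[t,t^{-1}]$; using $(t^{-1}-1)^{L-1}=(-1)^{L-1}t^{-(L-1)}(t-1)^{L-1}$ one computes $p(t)-p(t^{-1})=(t-1)^{L-1}t^{-(L-1)}\bigl(t^{L-1}\pi(t)-(-1)^{L-1}\pi(t^{-1})\bigr)$. Since $(t-1)^{L}$ divides the left-hand side and $L$ is even, cancelling $(t-1)^{L-1}$ and evaluating at $t=1$ gives $2\pi(1)=0$, hence $\pi(1)=0$ and $(t-1)^{L}\mid p(t)$; the symmetric computation at $t=-1$ handles $K$ even. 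Combined with Step 2 this yields $\tilde{P}(t)\mid p(t)$. Conversely, if $\tilde{P}(t)\mid p(t)$ write $p=\tilde{P}g$; since $\tilde{P}$ is reciprocal of even degree — even powers of $t\pm1$ are reciprocal, and $P^{*}$, being reciprocal with $P^{*}(\pm1)\neq0$, has its roots in pairs $\{\rho,\rho^{-1}\}$ with $\rho\neq\pm1$ and hence even degree — conditions (b) and (c) follow by evaluating the relevant difference polynomials at $t=\pm1$. Together with (a) this proves
\[ Z(I_{P(t)})=\{\,p(t)e+p(t^{-1})f+q(t)h\in\mathcal{L}^{\omega}~|~\tilde{P}(t)\mid p(t),\ P(t)\mid q(t)\,\}. \]

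\emph{Step 4: the closedness criterion.} Since $I_{P(t)}\subseteq Z(I_{P(t)})$ always, $I_{P(t)}$ is closed if and only if $Z(I_{P(t)})\subseteq I_{P(t)}$, i.e. if and only if $\tilde{P}(t)\mid p(t)$ implies $P(t)\mid p(t)$ for every $p\in k[t,t^{-1}]$, i.e. if and only if $P(t)\mid\tilde{P}(t)$. Comparing the exponents of $t-1$ and of $t+1$ on the two sides, this means $2[L/2]\geq L$ and $2[K/2]\geq K$, which holds precisely when $L$ and $K$ are even — i.e. when the zero-multiplicities of $P(t)$ at $t=\pm1$ are even. I expect the main obstacle to be the exponent-raising step of the third paragraph: one must exploit the precise interplay between the antipodal substitution $t\mapsto t^{-1}$ and the parities of $L$ and $K$ (in particular, $1-(-1)^{L-1}$ vanishes exactly for $L$ odd) while keeping careful track of signs and of which reciprocal polynomials have even degree.
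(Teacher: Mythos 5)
Your proposal is correct and follows essentially the same route as the paper: reduce membership in $Z(I_{P(t)})$ to the brackets with the generators $b_0,b_1$, translate into the divisibility conditions $P(t)\mid q(t)$, $P(t)\mid (t-t^{-1})p(t)$ (your $(t^2-1)p(t)$ is the same up to a unit) and $P(t)\mid p(t)-p(t^{-1})$, and then run the exponent-raising argument at $t=\pm1$ using the parity of $L$ and $K$. The only cosmetic difference is that your converse replaces the paper's four-case parity computation by the observation that $\tilde{P}$ is $(+)$-reciprocal of even degree, which is a slightly cleaner packaging of the same verification.
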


\begin{proof} Let $X = p(t)e + p(t^{-1})f + q(t)h \in \mathcal{L}^{\omega}$. Then
\begin{align*}
	X \in Z(I_{P(t)}) &\Leftrightarrow [X,\mathcal{L}^{\omega}] \subseteq I_{P(t)} \\
										&\Leftrightarrow P(t) \mid  [X,\mathcal{L}^{\omega}]\\
										&\Leftrightarrow P(t) \mid [X,e+f], [X,te+t^{-1}f] \tag*{by Corollary \ref{b0b1gen}.}
\end{align*}
Note that 
\begin{align*}
	[X,e+f] &= [p(t)e + p(t^{-1})f + q(t)h, e + f] \\
					&= 2q(t)e - 2q(t)f + (p(t)-p(t^{-1}))h \\
					&= 2q(t)e + 2q(t^{-1})f + (p(t)-p(t^{-1}))h \tag*{since $q(t) + q(t^{-1}) = 0$,}\\
					\\
	[X,te+t^{-1}f] 	&= [p(t)e + p(t^{-1})f + q(t)h, te + t^{-1}f] \\
									&= t^{-1}p(t)h - tp(t^{-1})h + 2tq(t)e - 2t^{-1}q(t)f \\
									&= 2tq(t)e + 2t^{-1}q(t^{-1})f + (t^{-1}p(t) - tp(t^{-1}))h \tag*{since $q(t) + q(t^{-1}) = 0$.}
\end{align*}
By furthermore noting that \[ t^{-1}p(t) - tp(t^{-1}) = -(t-t^{-1})p(t) + t(p(t) - p(t^{-1})), \] we have that
\begin{align*}
	P(t) &\mid [X,e+f], [X,te+t^{-1}f] \\
	\Leftrightarrow P(t) &\mid q(t), (t-t^{-1})p(t), p(t)-p(t^{-1}).
\end{align*}
Thus the claim of the lemma is equivalent to showing that \[ P(t) \mid (t-t^{-1})p(t), p(t)-p(t^{-1}) \Leftrightarrow \tilde{P}(t) \mid  p(t).\]
Let $p(t)$ be an element of $k[t,t^{-1}]$ such that $P(t) \mid (t-t^{-1})p(t), p(t)-p(t^{-1})$. We note that from $P(t) \mid (t-t^{-1})p(t)$  it follows that there exists an $\alpha \in k[t,t^{-1}]$ such that $(t-t^{-1})p(t) = \alpha P(t)$. Then, since $(t-t^{-1}) = t^{-1}(t-1)(t+1)$, we have that 
\begin{align*}
	& \qquad (t-t^{-1})p(t) = \alpha (t-1)^L(t+1)^KP^*(t)  \\
	&\Leftrightarrow t^{-1}(t-1)(t+1)p(t) = \alpha (t-1)^L(t+1)^KP^*(t) \\
	&\Leftrightarrow p(t) = \alpha t (t-1)^{L-1}(t+1)^{K-1}P^*(t).
\end{align*}
Hence, we have that
\begin{align*}
	P(t) \mid (t-t^{-1})p(t) \Leftrightarrow P^*(t) \mid p(t), ~ (t-1)^{L-1} \mid p(t), ~ (t+1)^{K-1} \mid p(t). 
\end{align*}
Next we note that in the trivial case when $L=K=0$, we simply have $P(t) = P^*(t) = \tilde{P}(t)$ and $\tilde{P}(t) \mid p(t)$ follows directly.

Next, consider the case when $L>0$. From $(t-1)^{L-1} \mid p(t)$, it follows that there exists $h(t) \in k[t,t^{-1}]$ such that $p(t) = h(t)(t-1)^{L-1}$. Furthermore, since $(t^{-1}-1)^{L-1} = (-t)^{1-L}(t-1)^{L-1}$, we have that
\begin{align*}
	p(t) - p(t^{-1}) 	&= h(t)(t-1)^{L-1} - h(t^{-1})(t^{-1}-1)^{L-1} \\
										&= (t-1)^{L-1}(h(t) - h(t^{-1})(-t)^{1-L}).
\end{align*}
Since $(t-1)^L \mid (p(t)-p(t^{-1}))$, it follows that $(t-1) \mid (h(t) - h(t^{-1})(-t)^{1-L})$, i.e., $h(1)(1-(-1)^{1-L}) = 0$. Note that if $L$ is even, then we have that $2h(1) = 0$, i.e., $h(1)=0$. It follows that $(t-1) \mid h(t)$ and so that $(t-1)^L \mid p(t)$. On the other hand, if $L$ is odd, we do not get any additional information. To summarize, we have that
\begin{align*}
	(t-1)^L& \mid p(t) \qquad \text{for $L$ even,}\\
	(t-1)^{L-1}& \mid p(t) \qquad \text{for $L$ odd}, 
\end{align*}
which is equivalent to $(t-1)^{2[L/2]} \mid p(t)$. If $K=0$, then we can conclude that $\tilde{P}(t) \mid p(t)$. So, consider the case when $K>0$. From $(t+1)^{K-1} \mid p(t)$, it follows that there exists $g(t) \in k[t,t^{-1}]$ such that $p(t) = g(t)(t+1)^{K-1}$. We then have that
\begin{align*}
	p(t) - p(t^{-1}) 	&= g(t)(t+1)^{K-1} - g(t^{-1})(t^{-1}+1)^{K-1} \\
										&= (t+1)^{K-1}(g(t) - g(t^{-1})t^{1-K})
\end{align*}
Since $(t+1)^K \mid p(t)-p(t^{-1})$, it follows that $(t+1) \mid (g(t) - g(t^{-1})t^{1-K})$, i.e., $g(-1)(1-(-1)^{1-K}) = 0$. Note that if $L$ is even, then we have that $2g(-1) = 0$, i.e., $g(-1)=0$. It follows that $(t+1) \mid g(t)$ and so that $(t+1)^K \mid p(t)$. On the other hand, if $L$ is odd, we do not get any additional information. To summarize, we have that
\begin{align*}
	(t+1)^K& \mid p(t) \qquad \text{for $K$ even,}\\
	(t+1)^{K-1}& \mid p(t) \qquad \text{for $K$ odd}, 
\end{align*}
which is equivalent to $(t+1)^{2[K/2]} \mid p(t)$. We can then conclude that, in all possible cases, one has that $\tilde{P}(t) \mid p(t)$. It remains to show that \[\tilde{P}(t) \mid  p(t) \Rightarrow P(t) \mid (t-t^{-1})p(t), p(t)-p(t^{-1}).\] 

If $\tilde{P}(t) \mid p(t)$, then there exists an $\beta(t) \in k[t,t^{-1}]$ such that 
\begin{align*}
	p(t) 	&= \beta(t) \tilde{P}(t) \\
				&= \beta(t)(t-1)^{2[L/2]}(t+1)^{2[K/2]}P^*(t)\\
				&= \begin{cases} \beta(t)P(t) & L,K \text{ even,}\\ \beta(t)(t+1)^{-1}P(t) & \text{$L$ even, $K$ odd,}\\ \beta(t)(t-1)^{-1}P(t) & \text{$K$ even, $L$ odd,}\\ \beta(t)(t-1)^{-1}(t+1)^{-1}P(t) &\text{$L,K$ odd.} \end{cases}
\end{align*}
Since $t - t^{-1} = t^{-1}(t-1)(t+1)$ it follows that 
\begin{align*}
	(t-t^{-1})p(t)	= \begin{cases} \beta(t)t^{-1}(t-1)(t+1)P(t) & L,K \text{ even,}\\ \beta(t)t^{-1}(t-1)P(t) & \text{$L$ even, $K$ odd,}\\ \beta(t)t^{-1}(t+1)P(t) & \text{$K$ even, $L$ odd,}\\ \beta(t)t^{-1}P(t) & \text{$L,K$ odd,} \end{cases}
\end{align*}
which allows us to conclude that $P(t) \mid (t-t^{-1})p(t)$. Furthermore, with the same case breakdown as above, we have that
\begin{align*}
	p(t) - p(t^{-1})	&= \begin{cases} \beta(t)P(t) - \beta(t^{-1})P(t^{-1}),\\ \beta(t)(t+1)^{-1}P(t) - \beta(t^{-1})(t^{-1}+1)^{-1}P(t^{-1}),\\ \beta(t)(t-1)^{-1}P(t) - \beta(t^{-1})(t^{-1}-1)^{-1}P(t^{-1}),\\ \beta(t)(t-1)^{-1}(t+1)^{-1}P(t) - \beta(t^{-1})(t^{-1}-1)^{-1}(t^{-1}+1)^{-1}P(t^{-1}). \end{cases}
\end{align*}
Since $P(t)$ is a reciprocal polynomial, we have that $P(t) = \pm t^dP(t^{-1})$ where $d$ is the degree of $P(t)$. Without loss of generality, we can assume that $P(t^{-1}) = t^{-d}P(t)$. It follows that the cases become
\begin{align*}
	p(t) - p(t^{-1})	&= \begin{cases} \left(\beta(t) - \beta(t^{-1})t^{-d}\right)P(t),\\ \left(\beta(t)(t+1)^{-1} - \beta(t^{-1})(t^{-1}+1)^{-1}t^{-d}\right)P(t),\\ \left(\beta(t)(t-1)^{-1} - \beta(t^{-1})(t^{-1}-1)^{-1}t^{-d}\right)P(t),\\ \left(\beta(t)(t-1)^{-1}(t+1)^{-1} - \beta(t^{-1})(t^{-1}-1)^{-1}(t^{-1}+1)^{-1}t^{-d}\right)P(t), \end{cases}
\end{align*}
and hence we have that $P(t) \mid (p(t)-p(t^{-1}))$. Finally, we can conclude that 
\[X \in Z(I_{P(t)}) ~ \Leftrightarrow ~ X \in \{p(t)e + p(t^{-1})f + q(t)h \in \mathcal{L}^{\omega} ~ | ~ \tilde{P}(t) \mid p(t), P(t) \mid q(t)\}. \]
\end{proof}

\begin{lem} \emph{(\cite[Lemma 3]{DateERoanS00})} \label{DRLem3} Let $I$ be an ideal in $\mathcal{L}^{\omega}$ and $r(t)$ be an element of $k[t,t^{-1}]$. 
\begin{enumerate}
	\item[(i)] If $r(t)e + r(t^{-1})f$ is an element in $I$, then $(p(t)-p(t^{-1}))h \in I$ for any $p(t) \in r(t)k[t,t^{-1}]$.
	\item[(ii)] For a closed ideal $I$ and an integer $l$, one has
	\begin{align*}
		(t^jr(t) - t^{-j}r(t^{-1}))h \in I ~ (j = 0,-1) ~\Longrightarrow &~ p(t^{\pm1})e + p(t^{\pm1})f \in I \\
																																     &\text{for any } p(t) \in r(t)k[t,t^{-1}] \\
	\end{align*}
	\vspace{-2cm}
	\begin{align*}
		r(t)e + r(t^{-1})f \in I &\Longleftrightarrow r(t^{-1})e + r(t)f \in I \\
														&\Longleftrightarrow t^lr(t)e + t^{-l}r(t^{-1})f \in I \\
														&\Longleftrightarrow (t^jr(t) - t^{-j}r(t^{-1}))h \in I ~ (j = 0,-1).
	\end{align*}
\end{enumerate}
\end{lem}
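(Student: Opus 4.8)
The plan is to prove part (i) by a direct bracket computation inside $\mathcal{L}^{\omega} \subseteq \mathcal{L}$, and then to bootstrap from it to part (ii). The passage from ``$e,f$-type'' elements of $I$ to ``$h$-type'' elements will be furnished by (i) and, dually, by bracketing against the $b_m$; the reverse passage, where the \emph{closedness} of $I$ enters, will come from the identity $Z(I) = I$ together with the fact (Corollary \ref{b0b1gen}) that $b_0 = e+f$ and $b_1 = te+t^{-1}f$ generate $\mathcal{L}^{\omega}$. Throughout, membership in $\mathcal{L}^{\omega}$ is checked by the criterion \eqref{FixedElemCrit}.

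For (i): assume $r(t)e + r(t^{-1})f \in I$ and let $p(t) = g(t)r(t)$ with $g(t) \in k[t,t^{-1}]$. By \eqref{FixedElemCrit} the element $g(t^{-1})e + g(t)f$ lies in $\mathcal{L}^{\omega}$, so its bracket with $r(t)e + r(t^{-1})f$ lies in $I$; the $\mathfrak{sl}_2$-relations give
\begin{align*}
  \big[ g(t^{-1})e + g(t)f,\ r(t)e + r(t^{-1})f \big] &= \big( g(t^{-1})r(t^{-1}) - g(t)r(t) \big)h \\
  &= -\big( p(t) - p(t^{-1}) \big)h ,
\end{align*}
so $(p(t) - p(t^{-1}))h \in I$, as claimed. (One may also argue via $\big[ b_n,\ \sum_m a_m b_m \big] = (t^n r(t^{-1}) - t^{-n}r(t))h$, where $r(t) = \sum_m a_m t^m$, and take linear combinations over $n$.)

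For (ii) I would first record two elementary consequences of the $\mathfrak{sl}_2$-relations: for $p(t) \in k[t,t^{-1}]$,
\begin{align*}
  \big[ p(t)e + p(t^{-1})f,\ b_0 \big] &= \big(p(t) - p(t^{-1})\big)h, \\
  \big[ p(t)e + p(t^{-1})f,\ b_1 \big] &= \big(t^{-1}p(t) - t\,p(t^{-1})\big)h ,
\end{align*}
and, if $q(t)h \in I$ (so $q(t^{-1}) = -q(t)$), then $q(t)e + q(t^{-1})f = -\frac{1}{2}\big[ b_0,\ q(t)h \big] \in I$. Now the content of (ii) is assembled from three steps. (a) If $r(t)e + r(t^{-1})f \in I$, then (i) with $p(t) = t^j r(t)$ gives $(t^j r(t) - t^{-j}r(t^{-1}))h \in I$ for every integer $j$, in particular for $j = 0,-1$. (b) Conversely, if $(t^j r(t) - t^{-j}r(t^{-1}))h \in I$ for $j = 0,-1$, these two elements are exactly $[S,b_0]$ and $[S,b_1]$ for $S := r(t)e + r(t^{-1})f \in \mathcal{L}^{\omega}$; since $b_0,b_1$ generate $\mathcal{L}^{\omega}$ and $I$ is an ideal, $\{Y \in \mathcal{L}^{\omega} : [S,Y] \in I\}$ is a subalgebra containing the generators, hence equals $\mathcal{L}^{\omega}$, so $S \in Z(I) = I$ by closedness. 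The same reasoning works with $S := p(t)e + p(t^{-1})f$ for any $p(t) = g(t)r(t) \in r(t)k[t,t^{-1}]$, because then $[S,b_0]$ and $[S,b_1]$ have the form $(m(t)r(t) - m(t^{-1})r(t^{-1}))h$ (for $m = g$, resp.\ $m = t^{-1}g$) and so lie in $I$ by (i) once $r(t)e+r(t^{-1})f \in I$ is known; this gives $p(t)e + p(t^{-1})f \in I$ for all such $p$. (c) From $r(t)e + r(t^{-1})f \in I$ one obtains $r(t^{-1})e + r(t)f \in I$ by applying (i) with $p = r$, then the second elementary fact with $q(t) = r(t) - r(t^{-1})$, and subtracting the two resulting elements of $I$; symmetrically the reverse implication holds. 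Combining: (a) and (b) give ``$r(t)e + r(t^{-1})f \in I \Leftrightarrow (t^jr(t)-t^{-j}r(t^{-1}))h \in I\ (j=0,-1)$'', and applying (b) and then (c) to each $p \in r(t)k[t,t^{-1}]$ gives the displayed implication of (ii); the equivalence with $t^l r(t)e + t^{-l}r(t^{-1})f \in I$ is steps (a)--(b) applied with $t^l r(t)$ in place of $r(t)$ (using that $t^l$ is a unit, so $t^l r(t)k[t,t^{-1}] = r(t)k[t,t^{-1}]$); and the equivalence with $r(t^{-1})e + r(t)f \in I$ is step (c).

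The $\mathfrak{sl}_2$-bracket identities are routine, and part (i) is essentially a one-line calculation. The crux is step (b): recognizing that the two $h$-elements in the hypothesis are precisely $[S,b_0]$ and $[S,b_1]$, which is what allows the closedness hypothesis $Z(I) = I$ to be invoked and makes the ``backward'' directions of the equivalences go through. The remaining effort is bookkeeping — tracking which multiplier $m(t) \in k[t,t^{-1}]$ is used in each invocation of (i), and using that (i) is available for \emph{every} such $m(t)$, not merely for monomials.
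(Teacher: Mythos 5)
Your proof is correct and follows essentially the same route as the paper's: the crux in both is recognizing the two $h$-type hypothesis elements as $[S,b_0]$ and $[S,b_1]$ for $S = r(t)e + r(t^{-1})f$ and then invoking Corollary \ref{b0b1gen} together with $Z(I)=I$. The only differences are cosmetic — your part (i) brackets directly against the general fixed element $g(t^{-1})e+g(t)f$ instead of reducing to monomials via the $b_n,c_l$ basis, and you obtain $p(t^{-1})e+p(t)f\in I$ from $p(t)e+p(t^{-1})f\in I$ by a small auxiliary bracket with $b_0$ rather than by running the $Z(I)$ argument on $-\sum_n a_n b_{-m-n}$ as the paper does.
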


\begin{proof} Let $I$ be an ideal in $\mathcal{L}^{\omega}$ and $r(t)$ be an element of $k[t,t^{-1}]$. Note that $r(t)$ can be written as $r(t) = \sum_n a_n t^n$, for $a_n \in k$ such that $a_n = 0$ when $|n| \gg 0$. 

(i) Assume $r(t)e + r(t^{-1})f \in I$ and let $p(t) \in r(t)k[t,t^{-1}]$. So, $p(t)$ can be written as $r(t) = \sum_l d_l t^lr(t)$, for $d_l \in k$ such that $d_l = 0$ when $|l| \gg 0$. Denote $p_l(t) := t^lr(t)$ and so $p(t) = \sum_l d_lp_l(t)$. We note that if $(p_l(t) - p_l(t^{-1}))h \in I$, for any $l \in \mathbb{Z}$, then 
\begin{align*}
	 & \qquad 			\sum_l d_l (p_l(t)-p_l(t^{-1}))h \in I, \\
	 &\Leftrightarrow \sum_l d_l p_l(t) h - \sum_l d_l p_l(t^{-1})h \in I, \\
	 &\Rightarrow	(p(t) - p(t^{-1}))h \in I.
\end{align*}
Hence, it is sufficient to show the claim (i) for $p(t) = t^mr(t)$ for arbitrary $m \in \mathbb{Z}$.

With this notation and (\ref{bmdef}), we have that
\[ r(t)e + r(t^{-1})f \in I ~	\Leftrightarrow ~ \sum_n a_n (t^ne + t^{-n}f) \in I ~ \Leftrightarrow ~ \sum_n a_n b_n \in I. \]
This implies that, for all $m \in \mathbb{Z}$, 
\begin{align*}
	&~ \left[\sum_n a_n b_n, b_{-m}\right] \in I \\
	\Rightarrow &~ \sum_n a_n [b_n, b_{-m}] \in I  \\
	\Rightarrow &~ \sum_n a_n c_{m+n} \in I \tag*{by Theorem \ref{ThmBasisRels}}\\
	\Leftrightarrow &~ \sum_n a_n (t^{m+n} - t^{-m-n})h \in I \tag*{by (\ref{cldef})} \\
	\Leftrightarrow &~ \left( t^m \sum_n a_n t^n - t^{-m} \sum_n a_n t^{-n}\right)h \in I \\
	\Leftrightarrow &~ (t^m r(t) - t^{-m}r(t^{-1}))h \in I \\
	\Leftrightarrow &~ (p(t) - p(t^{-1}))h \in I
\end{align*}

(ii) Now, let $I$ be a closed ideal (i.e., $I = Z(I)$) and $l \in \mathbb{Z}$. Assume $(t^jr(t) - t^{-j}r(t^{-1}))h \in I$ for $j = 0, -1$. By recalling that $r(t) = \sum_n a_n t^n$, we have that this is equivalent to \[\sum_na_n(t^{j+n} - t^{-j-n})h = \sum_na_nc_{j+n} \in I,\] for $j = 0, -1$. By Theorem \ref{ThmBasisRels}, we have that
\begin{align*}
	\sum_n a_nc_n = \left[\sum_n a_n b_n, b_0\right] \quad \text{and} \quad \sum_n a_n c_{n-1} = \left[\sum_n a_n b_n, b_1\right].
\end{align*}
It then follows from Corollary \ref{b0b1gen} that 
\begin{align*}
	&\qquad \left[\sum_n a_n b_n, \mathcal{L}^{\omega}\right] \subseteq I.
\end{align*}
Hence, we have that \[\sum_n a_n b_n \in Z(I) = I.\] This is equivalent to 
\begin{align*}
	&\sum_n a_n(t^ne + t^{-n}f) \in I\\
	\Longleftrightarrow ~ &r(t)e + r(t^{-1})f \in I \qquad \text{since $r(t) = \sum_n a_n t^n$.} 
\end{align*}
From (i) we now get that $(p(t) - p(t^{-1}))h \in I$ for all $p(t) \in r(t)k[t,t^{-1}]$. In particular, by considering the cases when $p(t) = t^{m+j}r(t)$ for $m \in \mathbb{Z}, j = 0, \pm1$, we have that $(t^{m+j}r(t) - t^{-m-j}r(t^{-1}))h \in I$ for $j = 0, \pm1$. This is equivalent to $\sum_n a_n(t^{m+j+n}-t^{-m-j-n})h = \sum_n a_n c_{m+n-j} \in I$ for $j = 0, \pm1$. By Theorem \ref{ThmBasisRels}, we have that
\begin{align*}
	\sum_n a_nc_{m+n} &= \left[\sum_n a_n b_{m+n}, b_0\right] = \left[-\sum_n a_n b_{-m-n}, b_0\right], \\
	\sum_n a_n c_{m+n-1} &= \left[\sum_n a_n b_{m+n}, b_1\right], \\
	\sum_n a_n c_{m+n+1} &= \left[-\sum_n a_n b_{-m-n}, b_1\right].
\end{align*}
From this and Corollary \ref{b0b1gen}, it follows that
\[\qquad \left[\sum_n a_n b_{m+n}, \mathcal{L}^{\omega}\right] \subseteq I \quad \text{and} \quad \left[\sum_n a_n b_{-m-n}, \mathcal{L}^{\omega}\right] \subseteq I.\]
Hence, we have that \[\Longrightarrow \sum_n a_n b_{m+n} \in Z(I) = I  \quad \text{and} \quad \sum_n a_n b_{-m-n} \in Z(I) = I.\] This is equivalent to 
\begin{align*}
	\Longleftrightarrow \sum_n a_n (t^{m+n}e + t^{-m-n}f), \sum_n a_n (t^{-m-n}e - t^{m+n}f) \in I \\
	\Longleftrightarrow \left(t^mr(t)e + t^{-m}r(t^{-1})f\right), \left(t^{-m}r(t^{-1})e + t^mr(t)f\right) \in I.
\end{align*}
Hence, for $p(t) = t^mr(t)$ ($m \in \mathbb{Z}$ arbitrary), we have that 
\begin{align*}
	p(t^{\pm1})e + p(t^{\mp1})f \in I.
\end{align*}
Then the same argument as in (i) allows us to conclude that $p(t^{\pm1})e + p(t^{\mp1})f \in I$ for all $p(t) \in r(t)k[t,t^{-1}]$. 

The equivalent relations in the second part of (ii) simply follow from the preceding ones of the lemma. 
\end{proof}

\begin{theo} \emph{(\cite[Theorem 2]{DateERoanS00})} Let $I$ be an ideal in $\mathcal{L}^{\omega}$. Then $I$ is closed if and only if $I = I_{P(t)}$ for a reciprocal polynomial $P(t)$ whose zeros at $t = \pm1$ are of even multiplicity. 
\end{theo}

\begin{proof} If $I = I_{P(t)}$ for a reciprocal polynomial $P(t)$ whose zeros at $t = \pm1$ are of even multiplicity, then we have already proven in Lemma \ref{DRLem2} that $I$ is closed. 

Next, let $I$ be a closed ideal. Denote \[\overline{I} := \{r(t) \in k[t,t^{-1}]~|~r(t)e + r(t^{-1})f \in I\}. \] By Lemma \ref{DRLem3}(ii), $\overline{I}$ is an ideal in $k[t,t^{-1}]$ invariant under the involution $r(t) \mapsto r(t^{-1})$. Let $P(t)$ be the unique monic polynomial which generates the ideal $\overline{I}\cap k[t]$ of the polynomial ring $k[t]$. Note that if $d$ is the degree of $P(t)$, then it follows from Lemma \ref{DRLem3}(ii) that $t^dP(t^{-1})$ also generates $\overline{I} \cap k[t]$. Hence, we must have that $P(t) = \pm t^dP(t^{-1})$, i.e., $P(t)$ is a reciprocal polynomial and we have that $\overline{I} = P(t)k[t,t^{-1}]$. 

The aim is to show that $I = I_{P(t)}$. Once this is proven, the result will again follow directly from Lemma \ref{DRLem2}. We begin by showing the inclusion $I_{P(t)} \subseteq I$. Let $X = p(t)e + p(t^{-1})f + q(t)h \in I_{P(t)}$. Then $p(t) \in P(t)k[t,t^{-1}] = \overline{I}$ by definition of $I_{P(t)}$ and so $p(t)e + p(t^{-1})f \in I$. Furthermore, since $q(t) \in P(t)k[t,t^{-1}] = \overline{I}$, it then follows that 
\begin{align*}
	[q(t)h, b_0] &= [q(t)h, e+f] = 2q(t)e + 2q(t^{-1})f \in I,\\
	[q(t)h, b_1] &= [q(t)h, te+t^{-1}f] = 2tq(t)e + 2t^{-1}q(t^{-1})f \in I.
\end{align*}
Hence, from Corollary \ref{b0b1gen}, we have that $q(t)h \in Z(I) = I$. Finally, since $p(t)e + p(t^{-1})f \in I$ and $q(t)h \in I$, we have that $X \in I$ and can conclude that $I_{P(t)} \subseteq I$.


So, it remains to show that $I \subseteq I_{P(t)}$.  Assume, on the contrary, that there exists an $X \in I \backslash I_{P(t)}$ and write
\begin{align*}
	X = p(t)e + p(t^{-1})f + q(t)h, \qquad q(t) = q_+(t) - q_+(t^{-1}),
\end{align*} 
where $p(t) \in k[t,t^{-1}]$ and $q_+(t) \in k[t]$. Note that if the degree of $q_+(t)$ is greater than that of $P(t)$, then by the division algorithm there exist $\overline{q}_+(t), h(t) \in k[t]$ such that \[q_+(t) = h(t)P(t) + \overline{q}_+(t),\] where the degree of $\overline{q}_+(t)$ is less than that of $P(t)$. Since $P(t)$ is reciprocal, this implies that 
\begin{align*}
	q(t)	&= h(t)P(t) + \overline{q}_+(t) - h(t^{-1})P(t^{-1}) - \overline{q}_+(t^{-1}) \\
				&= (h(t) \pm h(t^{-1})t^{-d})P(t) + \overline{q}_+(t) - \overline{q}_+(t^{-1}),
\end{align*}
where $d$ is the degree of $P(t)$. Since we are considering elements $X = p(t)e + p(t^{-1})f + q(t)h \notin I_{P(t)}$ it follows that, without loss of generality, we may assume that the polynomial $q_+(t)$ is of degree less than that of $P(t)$. Furthermore, since $q(t) = q_+(t) - q_+(t^{-1})$, we may also assume that $q_+(0)=0$. 

Let $\Delta$ be the set of elements $X$ as above, i.e.,
\begin{align*}
	\Delta = \left\{  p(t)e + p(t^{-1})f + q(t)h \in I \backslash I_{P(t)} \left| \begin{array}{c} q(t) = q_+(t) - q_+(t^{-1}) \text{ for } q_+(t) \in k[t], \\ \deg(q_+(t)) < \deg(P(t)), q_+(0)=0. \end{array} \right. \right\}
\end{align*}

Note that the polynomials $p(t)$ and $q(t)$ defining elements such as $X$ in $\Delta$ are not divisible by $P(t)$. Indeed if $q(t)$ were divisible by $P(t)$, then $q(t)h \in I_{P(t)}$, and hence we would have that $p(t)e + p(t^{-1})f \in I\backslash I_{P(t)}$. So $p(t) \in \overline{I} = P(t)k[t,t^{-1}]$, i.e., $p(t)$ would be divisible by $P(t)$. But this would mean that $X \in I_{P(t)}$, a contradiction. Hence, we have that $q(t)$ is not divisible by $P(t)$. Furthermore, it now follows that \[ [X, e + f] = [p(t)e + p(t^{-1})f + q(t)h, e + f] = 2q(t)e + 2q(t^{-1})f + (p(t)-p(t^{-1}))h \] is in $I \backslash I_{P(t)}$ because $q(t)$ is not divisible by $P(t)$. If $p(t)$ were divisible by $P(t)$, then $(p(t) - p(t^{-1}))h \in I_{P(t)}$, hence we would have that $2q(t)e + 2q(t^{-1})f \in I \backslash I_{P(t)}$. So $q(t) \in \overline{I} = P(t)k[t,t^{-1}]$, i.e., $q(t)$ would be divisible by $P(t)$, a contradiction. We have now shown that both $q(t)$ and $p(t)$ are not divisible by $P(t)$. 

Next, let $\tilde{X}$ be an element of $\Delta$ with degree of $q_+(t)$ being maximal. Since $\tilde{X} \in I$, we have that the following are also elements of $I$
\begin{align*}
	[\tilde{X},e+f] &= 2q(t)e + 2q(t^{-1})f + v(t)h,\\
	[[\tilde{X},e+f],te+t^{-1}f] &= 2tv(t)e - 2t^{-1}v(t)f + 2(t^{-1}q(t)-tq(t))h,
\end{align*}
where $v(t) = p(t) - p(t^{-1})$.  Recall that $p(t)$ and $q(t)$ are not divisible by $P(t)$. Furthermore, we claim that $v(t)$ is also not divisible by $P(t)$. To show this, we assume the contrary, i.e., that $P(t) \mid v(t)$. In this case, we have that $v(t)h \in I_{P(t)}$ and so \[2q(t)e + 2q(t^{-1})f \in I \backslash I_{P(t)}.\] It then follows that $q(t) \in \overline{I} = P(t)k[t,t^{-1}]$, i.e., $q(t)$ is divisible by $P(t)$. This is a contradiction. Hence allowing us to conclude that $v(t)$ is indeed not divisible by $P(t)$. 

Moreover, note that if $\tilde{q}(t) := tq_+(t) + t^{-1}q_+(t) \in k[t]$, then one has that $t^{-1}q(t) - tq(t^{-1}) = \tilde{q}(t) - \tilde{q}(t^{-1})$. The degree of the polynomial $\tilde{q}(t)$, and that of $\tilde{q}(t)-\tilde{q}(0)$, is greater than the degree of $q_+(t)$. Hence, $2tv(t)e - 2t^{-1}v(t)f + 2(t^{-1}q(t)-tq(t))h \notin \Delta$ and so $t^{-1}q(t) - tq(t^{-1})$ is divisible by $P(t)$. Furthermore, it follows that $P(t) \mid tv(t)$. This implies that there exists $f(t) \in k[t]$ such that \[tv(t) = f(t)P(t).\] Since $P(0) \neq 0$, we have that $t$ does not divide $P(t)$. Hence, $t$ must divide $f(t)$, i.e., $f(t) = tg(t)$ for some $g(t) \in k[t]$. So, we have that \[tv(t) = tg(t)P(t),\] which is equivalent to \[v(t) = g(t)P(t).\] But this implies that $P(t) \mid v(t)$, a contradiction. Hence, we have that $I = I_{P(t)}$ and Lemma \ref{DRLem2} then allows us to conclude that the zero-multiplicities of $P(t)$ at $t = \pm 1$ are even.
\end{proof}


\section{Alternate Involution}
\label{sec:AlternateInvolution}

The purpose of this section is to show that the Onsager algebra is also isomorphic to the Lie subalgebra of $\mathcal{L} = \mathfrak{sl}_2 \otimes k[t,t^{-1}]$ fixed by a slightly different involution which appears in the literature, for example in \cite{UglovDBIvanovIT96}. This involution is mainly used when one is looking to define the $\mathfrak{sl}_n$-analogue of the Onsager algebra. 

Consider the involution on $\mathfrak{sl}_2$ defined by 
\begin{align}
    \tilde{e} := -f, \quad \tilde{f} := -e, \quad \tilde{h} := -h .
\end{align}
This induces the involution $\sigma$ on $\mathcal{L} = \mathfrak{sl}_2 \otimes k[t,t^{-1}]$,
\begin{align}
    \sigma(x \otimes p(t)) = \tilde{x} \otimes p(t^{-1}), \quad \text{for } p(t) \in k[t,t^{-1}], x \in \mathfrak{sl}_2. \label{altinv}
\end{align}
 
In order to show that the subalgebra of $\mathcal{L}$ fixed by $\omega$, $\mathcal{L}^{\omega}$, is isomorphic to the subalgebra of $\mathcal{L}$ fixed by $\sigma$, $\mathcal{L}^{\sigma}$, one can refer to \cite[Lemma 3.3]{NeherESavageASenesiP09}. But the latter is stated in the language of schemes, hence for our purposes we will mimic the result in the context of Lie algebras as follows.

\begin{lem} Let $\tau: \mathcal{L} \rightarrow \mathcal{L}$ be defined by $u \otimes t \mapsto AuA^{-1} \otimes t^{-1}$ for $u \in \mathfrak{sl}_2$ and $A = \left( \begin{smallmatrix} i & 0 \\ 0 & 1 \end{smallmatrix} \right)$ ($i= \sqrt{-1}$). Then
\begin{itemize}
	\item[(i)] $\omega = \tau \sigma \tau^{-1}$,
	\item[(ii)] $\tau$ intertwines $\sigma$ and $\omega$; i.e., for any $x \in \mathcal{L}$, $\tau(\sigma(x)) = \omega(\tau(x))$, 
	\item[(iii)] $\mathcal{L}^{\omega} \cong \mathcal{L}^{\sigma}$ as Lie algebras. 
\end{itemize}
\end{lem}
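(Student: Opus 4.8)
The plan is to verify the three claims in order, since each follows from the previous. For part (i), I would first compute $\tau$ explicitly on the standard basis. Conjugation by $A = \left(\begin{smallmatrix} i & 0 \\ 0 & 1 \end{smallmatrix}\right)$ sends $e \mapsto i e$, $f \mapsto -i f$, and $h \mapsto h$ (a direct matrix computation), so $\tau(e \otimes t^n) = i e \otimes t^{-n}$, $\tau(f \otimes t^n) = -i f \otimes t^{-n}$, $\tau(h \otimes t^n) = h \otimes t^{-n}$. Note $\tau$ is an involution: $A u A^{-1}$ conjugation composed with $t \mapsto t^{-1}$ twice gives $A^2 u A^{-2}$-conjugation (which is again diagonal conjugation, and one checks $A^2$-conjugation is trivial on $e,f,h$ since the scalars $i^2 = -1$ appear squared) composed with the identity on $t$, so $\tau^{-1} = \tau$. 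Then I would compute $\tau\sigma\tau$ on each basis element: for instance $\tau\sigma\tau(e \otimes t^n) = \tau\sigma(i e \otimes t^{-n}) = \tau(i(-f) \otimes t^{n}) = i(-1)(-i f) \otimes t^{-n} = -f \otimes t^{-n}$... wait, that should come out to $\omega(e \otimes t^n) = f \otimes t^{-n}$, so I would track the signs carefully; the point is that the factors of $i$ introduced by $\tau$ on the $e$-component and removed by $\tau$ on the resulting $f$-component combine with the $-1$ from $\tilde e = -f$ to produce exactly $\omega$. I would tabulate all three cases $e, f, h$ and confirm $\tau\sigma\tau = \omega$, which is (i) since $\tau^{-1} = \tau$.

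For part (ii), this is an immediate rearrangement of (i): from $\omega = \tau\sigma\tau^{-1}$ we get $\omega\tau = \tau\sigma$, i.e. $\omega(\tau(x)) = \tau(\sigma(x))$ for all $x \in \mathcal{L}$. I should also note in passing that $\tau$ is a Lie algebra automorphism of $\mathcal{L}$ (it is the tensor product of the inner automorphism $u \mapsto AuA^{-1}$ of $\mathfrak{sl}_2$ with the algebra automorphism $t \mapsto t^{-1}$ of $k[t,t^{-1}]$, hence respects the bracket $[p \otimes x, q \otimes y] = pq \otimes [x,y]$), a fact that is needed for (iii).

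For part (iii), I would use (ii) to show $\tau$ restricts to an isomorphism $\mathcal{L}^{\sigma} \to \mathcal{L}^{\omega}$. If $x \in \mathcal{L}^{\sigma}$, i.e. $\sigma(x) = x$, then $\omega(\tau(x)) = \tau(\sigma(x)) = \tau(x)$, so $\tau(x) \in \mathcal{L}^{\omega}$; thus $\tau(\mathcal{L}^{\sigma}) \subseteq \mathcal{L}^{\omega}$. Applying the same argument to $\tau^{-1} = \tau$ and using (ii) in the form $\tau(\omega(y)) = \sigma(\tau(y))$ (again from (i)) gives $\tau(\mathcal{L}^{\omega}) \subseteq \mathcal{L}^{\sigma}$, so $\tau$ maps $\mathcal{L}^{\sigma}$ bijectively onto $\mathcal{L}^{\omega}$. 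Since $\tau$ is a Lie algebra homomorphism, this restriction is a Lie algebra isomorphism, giving $\mathcal{L}^{\omega} \cong \mathcal{L}^{\sigma}$.

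The only place requiring care — the "main obstacle," though it is really just bookkeeping — is part (i): getting all the factors of $i$ and $-1$ to cancel correctly across the three compositions, and confirming $\tau^{-1} = \tau$ (equivalently that $A^2 = -I$ acts trivially by conjugation, which it does since $-I$ is central in $GL_2$). Everything else is formal manipulation of the relation $\omega = \tau\sigma\tau^{-1}$.
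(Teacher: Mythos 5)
There is a genuine error in your treatment of part (i): the claim that $\tau^{-1}=\tau$ is false. You assert that $A^2=-I$, but $A^2=\left(\begin{smallmatrix} i & 0 \\ 0 & 1\end{smallmatrix}\right)^2=\left(\begin{smallmatrix} -1 & 0 \\ 0 & 1\end{smallmatrix}\right)$, which is not central: conjugation by it sends $e\mapsto -e$ and $f\mapsto -f$. Hence $\tau^2(e\otimes t^n)=-e\otimes t^n$, so $\tau$ has order $4$, not $2$, and computing $\tau\sigma\tau$ does not compute $\tau\sigma\tau^{-1}$. The sign discrepancy you ran into — $\tau\sigma\tau(e\otimes t^n)=-f\otimes t^{-n}$ where $\omega(e\otimes t^n)=f\otimes t^{-n}$ — is not a bookkeeping slip to be "tracked carefully"; it is the correct value of the wrong composite, and no amount of sign-tracking will make $\tau\sigma\tau$ equal $\omega$. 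The repair is immediate and is what the paper does: use $\tau^{-1}(u\otimes t^n)=A^{-1}uA\otimes t^{-n}$ with $A^{-1}=\left(\begin{smallmatrix}-i & 0\\ 0 & 1\end{smallmatrix}\right)$, so that $\tau^{-1}(e\otimes t^n)=-ie\otimes t^{-n}$, and then $\tau\sigma\tau^{-1}(e\otimes t^n)=\tau(if\otimes t^{n})=i(-if)\otimes t^{-n}=f\otimes t^{-n}=\omega(e\otimes t^n)$, with the analogous checks for $f$, $h$, and $t$.

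Your parts (ii) and (iii) are fine and coincide with the paper's argument (deduce $\omega\tau=\tau\sigma$ formally from (i); use it to show $\tau$ carries $\mathcal{L}^\sigma$ into $\mathcal{L}^\omega$ and $\tau^{-1}$ carries $\mathcal{L}^\omega$ into $\mathcal{L}^\sigma$), except that where you invoke $\tau^{-1}=\tau$ you should simply use $\tau^{-1}$ as computed above. Your observation that one should verify $\tau$ is a Lie algebra automorphism (inner automorphism tensored with $t\mapsto t^{-1}$) is a worthwhile addition that the paper leaves implicit.
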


\begin{proof}
\noindent\begin{itemize}
	\item[(i)] Begin by noting that $\tau^{-1}(u \otimes t) = A^{-1}uA \otimes t^{-1}$ and $A^{-1} = \frac{1}{i} \left( \begin{smallmatrix} 1 & 0 \\ 0 & i \end{smallmatrix} \right) = \left( \begin{smallmatrix} -i & 0 \\ 0 & 1 \end{smallmatrix} \right)$. Furthermore, we have that
		\begin{align*}
			\tau \sigma \tau^{-1} (e) &= \tau \sigma (A^{-1}eA) = \tau \sigma (A^{-1}e) = \frac{1}{i} \tau \sigma (e) = -\frac{1}{i} AfA^{-1} = Af = f = \omega(e), \\
			\tau \sigma \tau^{-1} (f)	&= \tau \sigma (A^{-1}fA) = -\frac{1}{i} \tau \sigma (f) = \frac{1}{i} \tau (e) = \frac{1}{i} AeA^{-1} = -A(ie) = e = \omega(f), \\
			\tau \sigma \tau^{-1} (h)	&= \tau \sigma(A^{-1}hA) = \frac{1}{i} \tau \sigma (ih) = \tau \sigma (h) = - \tau (h) = -AhA^{-1} = -h = \omega(h),\\
			\tau \sigma \tau^{-1} (t)	 &= \tau \sigma (t^{-1}) = \tau (t) = t^{-1} = \omega(t).													 
		\end{align*}
		Therefore $\tau \sigma \tau^{-1} = \omega$, as desired.
	\item[(ii)] Let $x$ be an arbitrary element of $\mathcal{L}$. Then \[ \omega(\tau(x)) = \tau \sigma \tau^{-1}(\tau(x)) = \tau \sigma(x). \] Therefore, $\tau$ indeed intertwines $\omega$ and $\sigma$.
	\item[(iii)] If $x \in \mathcal{L}^{\sigma}$ (i.e., $\sigma(x) = x$), then it follows from (ii) that \[ \omega(\tau(x)) = \tau(\sigma(x)) = \tau(x), \] i.e., $\tau(x) \in \mathcal{L}^{\omega}$. In the other direction, if $y \in \mathcal{L}^{\omega}$ (i.e., $\omega(y) = y$), then \[ \tau^{-1}(\omega(y)) = \tau^{-1}(y) \] and it follows from (i) that \[ \tau^{-1}(\omega(y)) = \tau^{-1}(\tau \sigma \tau^{-1}(y)) = \sigma(\tau^{-1}(y)) = \tau^{-1}(y), \] i.e., $\tau^{-1}(y) \in \mathcal{L}^{\sigma}$.
	
	Hence $\tau$ yields the desired automorphism and it follows that $\mathcal{L}^{\omega} \cong \mathcal{L}^{\sigma} (\cong \mathcal{O})$ as Lie algebras. 
\end{itemize}
\end{proof}

\begin{rmk} We include some remarks which put the lemma above in a broader context.
\noindent\begin{itemize}
	\item[(i)] Observe that the Chevalley involution $\omega$ is given by $\omega(u) = \Int \left( \begin{smallmatrix} 0 & 1 \\ 1 & 0 \end{smallmatrix} \right)$ where, for any $g \in$ GL$_2$, the map $\Int(g): \mathfrak{sl}_2 \rightarrow \mathfrak{sl}_2$ is the inner automorphism defined by $\Int(g)(u) = gug^{-1}$ (Thus $g = \left( \begin{smallmatrix} 0 & 1 \\ 1 & 0 \end{smallmatrix} \right)$ for the Chevalley involution).
	\item[(ii)] For any $g \in$ GL$_2$, the map \[\Int\left(g \left( \begin{smallmatrix} 0 & 1 \\ 1 & 0 \end{smallmatrix} \right) g^{-1}\right) = \Int(g) \omega \Int(g^{-1}) = \Int(g) \omega \Int(g)^{-1}\] is therefore another involution of $\mathfrak{sl}_2$.
	For example, if $g = \left( \begin{smallmatrix} -i & 0 \\ 0 & 1 \end{smallmatrix} \right)$ then we get \[ g \left( \begin{smallmatrix} 0 & 1 \\ 1 & 0 \end{smallmatrix} \right) g^{-1} = \left( \begin{smallmatrix} -i & 0 \\ 0 & 1 \end{smallmatrix} \right)\left( \begin{smallmatrix} 0 & 1 \\ 1 & 0 \end{smallmatrix} \right)\left( \begin{smallmatrix} i & 0 \\ 0 & 1 \end{smallmatrix} \right) = \left( \begin{smallmatrix} 0 & -i \\ i & 0 \end{smallmatrix} \right)\] and \[\Int\left( \begin{smallmatrix} 0 & -i \\ i & 0 \end{smallmatrix} \right)\left( \begin{smallmatrix} a & b \\ c & -a \end{smallmatrix} \right) = \left( \begin{smallmatrix} 0 & -i \\ i & 0 \end{smallmatrix} \right)\left( \begin{smallmatrix} a & b \\ c & -a \end{smallmatrix} \right)\left( \begin{smallmatrix} 0 & -i \\ i & 0 \end{smallmatrix} \right) = \left( \begin{smallmatrix} -a & -c \\ -b & a \end{smallmatrix} \right),\] i.e., $\Int\left( \begin{smallmatrix} 0 & -i \\ i & 0 \end{smallmatrix} \right) = \sigma$ as defined in \eqref{altinv}.
	\item[(iii)] The automorphism $\Int(g)$, $g \in$ GL$_2$, of $\mathfrak{sl}_2$ lifts to an automorphism INT($g$) of $\mathcal{L} = \mathfrak{sl}_2 \otimes k[t,t^{-1}]$ by \[ \text{INT}(g)(u \otimes p(t)) = \left(\Int(g)(u)\right) \otimes p(t^{-1}).\] 
	\item[(iv)] If $\sigma$ is an involution of $\mathfrak{sl}_2$, $g \in$ GL$_2$ and $\sigma' = \Int(g)\sigma\Int(g^{-1})$, then the corresponding involution of $\mathcal{L}$, \[u \otimes p(t) \mapsto \sigma(u) \otimes p(t^{-1}) \quad \text{and} \quad u \otimes p(t) \mapsto \sigma'(u) \otimes p(t^{-1})\] are intertwined by INT($g$), i.e., $\sigma' =$ INT$(g)\sigma$INT$(g^{-1})$ (we use $\sigma$ and $\sigma'$ to denote the corresponding involutions of $\mathcal{L} = \mathfrak{sl}_2 \otimes k[t,t^{-1}]$). As a corollary, $\mathcal{L}^{\sigma'} \cong \mathcal{L}^{\sigma}$. 
	\item[(v)] It is known that Aut $\mathfrak{sl}_2 = \{\Int(g) ~|~ g \in \text{GL}_2\}$. Suppose $\sigma \in \text{Aut }\mathfrak{sl}_2$ is an involution. Then $\sigma = \Int(g)$ for some $g \in \text{GL}_2$. Thus
	\begin{align}
		u = \sigma^2(u) = \Int(g^2)(u) = g^2ug^{-2}, \label{g2u=ug2}
	\end{align}
	for all $u \in \mathfrak{sl}_2$. Let $g^2 = \left( \begin{smallmatrix} a & b \\ c & d \end{smallmatrix} \right)$ and $ u = \left( \begin{smallmatrix} j & m \\ l & -j \end{smallmatrix} \right).$ So, it follows from (\ref{g2u=ug2}) that
	\begin{align*}
		g^2u &= ug^2 \\
		\Leftrightarrow ~ \left( \begin{matrix} a & b \\ c & d \end{matrix} \right) \left( \begin{matrix} j & m \\ l & -j \end{matrix} \right) &= \left( \begin{matrix} j & m \\ l & -j \end{matrix} \right) \left( \begin{matrix} a & b \\ c & d \end{matrix} \right) \\
		\Leftrightarrow ~ \left( \begin{matrix} aj+bl & am-bj \\ cj+dl & cm-dj \end{matrix} \right) &= \left( \begin{matrix} ja+mc & jb+md \\ la-jc & lb-jd \end{matrix} \right).
	\end{align*}
	From this we have that 
	\begin{align}
				bl &= cm, \label{1}\\
				am &= 2bj + dm, \label{2}\\ 
				dl &= -2cj + al. \label{3}
	\end{align}
	So, we have that (\ref{1}), (\ref{2}) and (\ref{3}) are satisfied for all $u = \left( \begin{smallmatrix} j & m \\ l & -j \end{smallmatrix} \right) \in \mathfrak{sl}_2$. In particular, if we consider when $m=0$ and $l=1$, it follows that $b=0$. On the other hand, if we consider when $m=1$ and $l=0$, then we have that $c=0$. And finally, since $b=c=0$, (\ref{2}) and (\ref{3}) then allow us to conclude that $a=d$. Hence, the equation (\ref{g2u=ug2}) implies that $g^2 = aE_2 = \left( \begin{smallmatrix} a & 0 \\ 0 & a \end{smallmatrix} \right)$ for some $a \in k$. One can use this to classify all involutions of $\mathfrak{sl}_2$.
\end{itemize}
\end{rmk}
\cleardoublepage

\chapter{Subalgebra of the Tetrahedron Algebra}
\label{sec:SubalgebraOfTheTetrahedronAlgebra}

Throughout this chapter, we return to $k$ being an arbitrary field of characteristic zero. 

The content of this chapter consists of discussing the so-called tetrahedron algebra and its relationship with the Onsager algebra. This proves to be useful because of the relationship between the tetrahedron algebra and the so-called three-point $\mathfrak{sl}_2$-loop algebra. Hence, we begin by considering the latter relationship in the first section and then move on to exploring the link between the tetrahedron algebra and the Onsager algebra. Finally, using this, we will describe all the ideals of the Onsager algebra.  

\section{The Tetrahedron Algebra}
\label{sec:TheTetrahedronAlgebra}

In order to study the relationship between the tetrahedron algebra and the so-called three-point $\mathfrak{sl}_2$ loop algebra, we first clearly require some definitions. 

\begin{defn}[Tetrahedron Algebra] The \emph{tetrahedron algebra}, denoted $\boxtimes$ (\cite[Definition 1.1]{HartwigBTerwilligerP07}), is the Lie algebra over $k$ with generators
\begin{align}
    \{X_{ij} | i,j \in \{0,1,2,3\}, i \neq j\} \label{tetragen}
\end{align}
and the relations
\begin{align}
    X_{ij} + X_{ji} = 0 &\quad \text{for } i \neq j, \label{tetra1} \\
    [X_{ij},X_{jk}] = 2(X_{ij} + X_{jk}) &\quad \text{for mutually distinct } i,j,k, \label{tetra2} \\
    [X_{hi},[X_{hi},[X_{hi},X_{jk}]]] = 4[X_{hi},X_{jk}] &\quad \text{for mutually distinct } h,i,j,k. \label{tetra3}
\end{align}
\end{defn}

\begin{defn}[Three-point $\mathfrak{sl}_2$ Loop Algebra] The \emph{three-point $\mathfrak{sl}_2$ loop algebra} is the Lie algebra over $k$ consisting of the $k$-vector space $\mathfrak{sl}_2 \otimes k[t,t^{-1},(1-t)^{-1}]$ and Lie bracket \[ [x \otimes a, y \otimes b] = [x,y] \otimes ab \] for $x, y \in \mathfrak{sl}_2$ and $a, b \in k[t,t^{-1},(1-t)^{-1}]$. Note that $k[t,t^{-1},(1-t)^{-1}]$ is the subalgebra of the field of rational functions $k(t)$, generated by $t$, $t^{-1}$ and $(1-t)^{-1}$.  
\end{defn}

In \cite[Theorem 11.5]{HartwigBTerwilligerP07} and \cite[Corollary 2.9]{ElduqueA07}, it is shown that $\boxtimes$ and $\mathfrak{sl}_2 \otimes k[t,t^{-1},(1-t)^{-1}]$ are isomorphic as Lie algebras under the homomorphism $\psi$ defined by
\begin{align*}
    \psi (X_{12}) = x \otimes 1, &\qquad \psi (X_{03}) = y \otimes t + z \otimes (t-1), \\
    \psi (X_{23}) = y \otimes 1, &\qquad \psi (X_{01}) = z \otimes t' + x \otimes (t'-1), \\
    \psi (X_{31}) = z \otimes 1, &\qquad \psi (X_{02}) = x \otimes t'' + y \otimes (t''-1).
\end{align*}
where $t' = 1-t^{-1}$, $t'' = (1-t)^{-1}$ and
\begin{align*}
    \left\{ x = 2e -h = \left( \begin{array}{cc} -1 & 2 \\ 0 & 1 \end{array} \right), \ y = -2f - h = \left( \begin{array}{cc} -1 & 0 \\ -2 & 1 \end{array} \right), \ z = h = \left( \begin{array}{cc} 1 & 0 \\ 0 & -1 \end{array} \right) \right\}
\end{align*}
is a basis of $\mathfrak{sl}_2$. Since this basis satisfies $[x,y] = 2(x+y)$, $[y,z] = 2(y+z)$ and $[z,x] = 2(z+x)$, we call $x, y, z$ the \emph{equitable basis} for $\mathfrak{sl}_2$ (\cite{GBenkartPTerwilliger07}).

When proving that $\psi$ is an isomorphism, Elduque considers the following elements of $\mathfrak{sl}_2 \otimes k[t,t^{-1},(1-t)^{-1}]$,
\begin{align}
        u_0 &= \frac{1}{4} \psi (X_{02} + X_{31}) = \frac{1}{4}(z \otimes 1 + x \otimes t'' + y \otimes (t'' - 1)), \label{u0} \\
        u_1 &= \frac{1}{4}\psi(X_{03} + X_{12}) = \frac{1}{4}(x \otimes 1 + y \otimes t + z \otimes (t - 1)), \label{u1} \\
        u_2 &= \frac{1}{4}\psi(X_{01} + X_{23}) = \frac{1}{4}(y \otimes 1 + z \otimes t' + x \otimes (t' - 1)), \label{u2}
\end{align}
which are proven to generate $\mathfrak{sl}_2 \otimes k[t,t^{-1},(1-t)^{-1}]$ as a Lie algebra over $k$. Simple computations show that $u_0$, $u_1$, $u_2$ satisfy the following relations:
\begin{align}
    [u_0,u_1] = -u_2t, \qquad \quad [u_1,u_2] = -u_0t', \qquad \quad [u_2,u_0] = -u_1t''. \label{uiRelations}
\end{align}
Furthermore, one can prove that $\{u_0,u_1,u_2\}$ is a basis of $\mathfrak{sl}_2 \otimes k[t,t^{-1},(1-t)^{-1}]$ as a module over $k[t,t^{-1},(1-t)^{-1}]$.

With this, we can proceed to discuss the relationship between the tetrahedron algebra and the Onsager algebra in section \ref{sec:TheTetrahedronAlgebraAndARealizationOfTheOnsagerAlgebra}. The results will then be used to describe all the ideals, in particular the closed ideals of the Onsager algebra in section \ref{sec:IdealsOfTheOnsagerAlgebra}. But first, let us note a relation between the tetrahedron algebra and $\mathfrak{sl}_2$. 

\begin{note} \label{notesl2copy} In \cite[Corollary 12.4]{HartwigBTerwilligerP07}, it is shown that for mutually distinct $h,i,j \in \{0,1,2,3\}$ the elements $X_{hi}, X_{ij}, X_{jh}$ form an `equitable' basis for a subalgebra of $\boxtimes$ that is isomorphic to $\mathfrak{sl}_2$. Note that the relation (\ref{tetra2}) is an indication of this isomorphism.
\end{note}

\section{The Tetrahedron Algebra and a Realization of the Onsager Algebra}
\label{sec:TheTetrahedronAlgebraAndARealizationOfTheOnsagerAlgebra}

\begin{lem} For mutually distinct $h,i,j,k \in \{0,1,2,3\}$, there exists a unique Lie algebra homomorphism
\begin{align*}
    \varphi : ~ & \mathcal{O} \longrightarrow \boxtimes, \\
                          & A  \longmapsto X_{hi}, \\
                          & B  \longmapsto X_{jk}.
\end{align*}
\end{lem}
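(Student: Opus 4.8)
The plan is to use the Dolan–Grady presentation of the Onsager algebra (Proposition \ref{IsomDG}), which says that $\mathcal{O} \cong \mathcal{OA}$ is the Lie algebra freely generated by two elements $A$, $B$ subject only to the two Dolan–Grady relations
\begin{align*}
    [A,[A,[A,B]]] = 4[A,B], \qquad [B,[B,[B,A]]] = 4[B,A].
\end{align*}
By this universal property, to produce a Lie algebra homomorphism $\varphi : \mathcal{O} \to \boxtimes$ sending $A \mapsto X_{hi}$ and $B \mapsto X_{jk}$, it suffices to check that the pair $(X_{hi}, X_{jk})$ satisfies these same two relations in $\boxtimes$; uniqueness is then automatic because $A$ and $B$ generate $\mathcal{O}$.

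First I would observe that since $h,i,j,k$ are mutually distinct elements of $\{0,1,2,3\}$, the pairs $\{h,i\}$ and $\{j,k\}$ are disjoint, so $X_{hi}$ and $X_{jk}$ are of the form $X_{ab}$, $X_{cd}$ with $\{a,b,c,d\}$ all distinct. The relation \eqref{tetra3} of the tetrahedron algebra, applied with the four-index string $(h,i,j,k)$, gives precisely
\begin{align*}
    [X_{hi},[X_{hi},[X_{hi},X_{jk}]]] = 4[X_{hi},X_{jk}],
\end{align*}
which is the first Dolan–Grady relation for the pair $(X_{hi}, X_{jk})$. For the second, I would apply \eqref{tetra3} again, this time with the string $(j,k,h,i)$ (also a sequence of mutually distinct indices), to get
\begin{align*}
    [X_{jk},[X_{jk},[X_{jk},X_{hi}]]] = 4[X_{jk},X_{hi}].
\end{align*}
Thus both Dolan–Grady relations hold, so by the presentation of $\mathcal{O}$ the assignment $A \mapsto X_{hi}$, $B \mapsto X_{jk}$ extends (uniquely) to a Lie algebra homomorphism $\varphi : \mathcal{O} \to \boxtimes$.

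There is essentially no obstacle here: the content is entirely that the defining relation \eqref{tetra3} of $\boxtimes$ is exactly a Dolan–Grady relation, and it is symmetric enough (being indexed by an ordered string of four distinct indices, all four of which are available to us) to yield both relations. The only point deserving a word of care is that the statement of the lemma writes $\varphi$ on generators $A$, $B$ of $\mathcal{O}$ — so I would open the proof by recalling that, via Proposition \ref{IsomDG}, $\mathcal{O}$ is identified with $\mathcal{OA}$ and hence is generated by $A_0 = A$ and $A_1 = B$ subject only to \eqref{DG1}, \eqref{DG2}; after that the verification above closes the argument.
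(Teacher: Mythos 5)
Your proposal is correct and follows essentially the same route as the paper: both invoke the Dolan--Grady presentation of $\mathcal{O}$ from Proposition \ref{IsomDG}, observe that relation \eqref{tetra3} of $\boxtimes$ is exactly the Dolan--Grady relation for the pair $(X_{hi}, X_{jk})$ (and, by symmetry of the index string, for $(X_{jk}, X_{hi})$), and deduce existence from the universal property and uniqueness from the fact that $A$ and $B$ generate $\mathcal{O}$. Your write-up is if anything slightly more careful than the paper's, which leaves the second relation implicit.
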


\begin{proof} By definition of the tetrahedron algebra,
\begin{align*}
    [X_{hi},[X_{hi},[X_{hi},X_{jk}]]] = 4[X_{hi},X_{jk}],
\end{align*}
i.e., the elements $X_{hi}$, $X_{jk}$ satisfy the Dolan-Grady relations (\ref{DG1}) and (\ref{DG2}) which, as seen in Chapter 2, define the Onsager algebra. This proves the existence of such a homomorphism. This homomorphism is unique since $A$ and $B$ generate $\mathcal{O}$.
\end{proof}

\begin{lem} \label{LemmaJOAinter} Let $\mathcal{O}_A$ be the linear span of $\{A_m |\  m \in \mathbb{Z}\}$ and $\mathcal{O}_G$ be the linear span of $\{G_l |\  l \in \mathbb{N_+}\}$; hence $\mathcal{O} = \mathcal{O}_A \oplus \mathcal{O}_G$. If $J$ is a nontrivial ideal of $\mathcal{O}$, then $J$ intersects $\mathcal{O}_A$ nontrivially, i.e., $J \cap \mathcal{O}_A \neq \{0\}$.
\end{lem}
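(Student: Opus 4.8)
The plan is to take an arbitrary nonzero element $X \in J$, write it in terms of the basis $\{A_m, G_l\}$, and produce from it a nonzero element of $\mathcal{O}_A$ by bracketing with suitable elements of $\mathcal{O}$. Write $X = \sum_{m} a_m A_m + \sum_{l \geq 1} g_l G_l$ with finitely many nonzero coefficients. There are two cases: either the ``$A$-part'' $\sum_m a_m A_m$ is already nonzero, or $X$ lies entirely in $\mathcal{O}_G$, i.e.\ $X = \sum_{l\geq 1} g_l G_l$ with not all $g_l$ zero.

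First I would dispose of the case where $X$ has a nonzero $A$-part. Bracketing $X$ with $G_1$ and using relations (\ref{Onsrel2}) and (\ref{Onsrel3}), we get $[G_1, X] = \sum_m a_m (A_{m+1} - A_{m-1}) \in \mathcal{O}_A$. So it suffices to see that this is nonzero, or more simply: apply $\ad_{G_1}$ repeatedly, or instead observe that $[G_1,[G_1,\dots]]$ preserves $\mathcal{O}_A$ and, in terms of the generating-function / Chebyshev-type recursion already used for (\ref{Am+lDef}), cannot annihilate a nonzero finitely-supported sequence $(a_m)$ unless all $a_m = 0$. A cleaner route that avoids this: note that if $X\notin\mathcal O_G$ then either $X$ or $[G_1,X]$ (which lies in $\mathcal O_A\oplus\mathcal O_G$) already has nonzero $A$-part; but we want an element \emph{of} $\mathcal O_A$, not just one with nonzero $A$-part. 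So I would instead use the projection: since $\mathcal O=\mathcal O_A\oplus\mathcal O_G$ and $[\mathcal O_G,\mathcal O_A]\subseteq\mathcal O_A$, $[\mathcal O_A,\mathcal O_A]\subseteq\mathcal O_G$, $[\mathcal O_G,\mathcal O_G]=0$, the bracket $[G_1,\cdot]$ maps $\mathcal O_G$ to $0$ and $\mathcal O_A$ into $\mathcal O_A$; hence $[G_1,X]=[G_1,\sum a_mA_m]\in\mathcal O_A$, and the real content is showing this is nonzero for some iterated bracket. Using $[G_1,A_m]=A_{m+1}-A_{m-1}$, the operator $\ad_{G_1}$ acts on the index space as the difference operator $m\mapsto (m{+}1)\text{-shift} - (m{-}1)\text{-shift}$; since this operator is injective on finitely-supported sequences (look at the extreme nonzero index), $[G_1,X]$ is a nonzero element of $\mathcal O_A$ when the $A$-part of $X$ is nonzero.

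The main obstacle is the remaining case $X = \sum_{l\geq 1} g_l G_l \in \mathcal O_G$ with $X\neq 0$. Here $[G_1,X]=0$, so bracketing with $\mathcal O_G$ gives nothing; instead I would bracket with some $A_m$, say $A_0$: by (\ref{Onsrel2}), $[X, A_0] = \sum_{l\geq 1} g_l (A_{l} - A_{-l}) \in \mathcal O_A$. This lies in $J$ since $J$ is an ideal. It remains to check it is nonzero: if $\sum_{l\geq1} g_l(A_l - A_{-l}) = 0$ then, since $\{A_m\}$ is part of a basis and the indices $l, -l$ for $l\geq 1$ are all distinct, every $g_l = 0$, contradicting $X\neq 0$. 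Thus $[X,A_0]$ is the desired nonzero element of $J\cap\mathcal O_A$. Combining the two cases proves the lemma; the only subtlety worth spelling out carefully is the injectivity of $\ad_{G_1}$ on $\mathcal O_A$ in the first case, which follows by examining the largest index $m$ with $a_m\neq 0$.
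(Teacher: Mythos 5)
Your proposal is correct and follows essentially the same route as the paper: split according to whether the $\mathcal{O}_A$-component of a nonzero element of $J$ vanishes, bracket with $A_0$ in the purely-$\mathcal{O}_G$ case, and bracket with an element of $\mathcal{O}_G$ to kill the $G$-part otherwise. Your explicit verification that $\ad_{G_1}$ is injective on finitely supported combinations of the $A_m$ (by looking at the extreme index) is a point the paper's proof glosses over when it brackets with $G_l$, so that detail is a welcome addition rather than a deviation.
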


\begin{proof} Let $J$ be a nontrivial ideal of $\mathcal{O}$ and let $0 \neq x \in J$. Since $\mathcal{O} = \mathcal{O}_A \oplus \mathcal{O}_G$ (by linear independence of $\{A_m, G_l ~|~ m \in \mathbb{Z}, l \in \mathbb{N}_+\}$), $x$ can be written as:
\begin{align}
    x = \sum_{m \in \mathbb{Z}} c_m A_m + \sum_{n \in \mathbb{N}} d_n G_n \label{formx}
\end{align}
where only finitely many $c_m$ and $d_n$ are non-zero.

If $c_m = 0$ for all $m \in \mathbb{Z}$, we can write $x$ in the form:
\[x = \sum_{n \leq N} d_n G_n\]
where $d_N \neq 0$ and $d_n = 0$ for all $n > N$. Since $J$ is an ideal of $\mathcal{O}$, it follows that $[x,A_0] \in J$. Hence,
\begin{align*}
    [x,A_0] &= \left[ \sum_{n \leq N} d_n G_n, A_0\right] \\
                    &= \sum_{n \leq N} d_n[G_n,A_0] \\
                    &= \sum_{n \leq N} d_n (A_n - A_{-n})  \in J\backslash \{0\}.
\end{align*}
So this allows us to state that any nontrivial ideal $J$ contains an element of the form (\ref{formx}), where there exists at least one $m$ such that $c_m \neq 0$. It then follows that there exists an $M \in \mathbb{N}$ such that $c_M \neq 0$ and $c_m = 0$ for $|m| > M$. In sum, $0 \neq x \in J$ can be written as
\begin{align*}
	x = \sum_{|m| \leq M} c_m A_m + \sum_{n \leq N} d_n G_n, \tag*{$c_M \neq 0$.}
\end{align*}
Again, since $J$ is an ideal of $\mathcal{O}$, it follows that $[G_l,x] \in J$. Hence,
\begin{align*}
    [G_l,x] &= \left[G_l, \sum_{|m| \leq M} c_m A_m + \sum_{|n| \leq N} c_n G_n\right] \\
                    &= \sum_{|m| \leq M} c_m [G_l,A_m] + \sum_{|n| \leq N} c_n [G_l,G_n] \\
                    &= \sum_{|m| \leq M} c_m [G_l,A_m]  \in J.
\end{align*}
Thus, it follows that $J$ intersects $\mathcal{O}_A$ nontrivially, i.e., $J \cap \mathcal{O}_A \neq \{0\}$.
\end{proof}

\begin{lem} \emph{(\cite[Lemma 2.8]{ElduqueA07})} \label{Lemma2.8Injective} Fix mutually distinct $h,i,j,k \in \{0,1,2,3\}$. The Lie algebra homomorphism
\begin{align*}
    \phi: \mathcal{O} \rightarrow \mathfrak{sl}_2 \otimes k[t,t^{-1},(1-t)^{-1}]
\end{align*}
determined by $\phi(A) = \psi(X_{hi})$ and $\phi(B) = \psi(X_{jk})$ is injective.
\end{lem}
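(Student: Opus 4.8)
The plan is to exhibit a nonzero element of $\mathfrak{sl}_2 \otimes k[t,t^{-1},(1-t)^{-1}]$ that lies in the image $\phi(J)$ of any nonzero ideal $J \subseteq \mathcal{O}$, and then conclude that $\ker\phi = 0$ since $\ker\phi$ is itself an ideal of $\mathcal{O}$. By Lemma \ref{LemmaJOAinter}, any nonzero ideal $J$ meets $\mathcal{O}_A$ nontrivially, so it suffices to understand how $\phi$ acts on a generic nonzero element $x = \sum_{|m|\le M} c_m A_m \in J \cap \mathcal{O}_A$ (with $c_M \neq 0$). Thus the whole argument reduces to showing that $\phi(x) \neq 0$ whenever $x \in \mathcal{O}_A \setminus \{0\}$, which in turn follows once we show that $\{\phi(A_m) \mid m \in \mathbb{Z}\}$ is a linearly independent set in $\mathfrak{sl}_2 \otimes k[t,t^{-1},(1-t)^{-1}]$.

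First I would pin down, for a convenient choice of $h,i,j,k$ (say the one used in Elduque's setup so that $\phi(A) = \psi(X_{hi})$ and $\phi(B) = \psi(X_{jk})$ match the $u_i$-picture of \eqref{u0}--\eqref{u2}), the images $\phi(A_0)$ and $\phi(A_1)$ explicitly as elements of $\mathfrak{sl}_2 \otimes k[t,t^{-1},(1-t)^{-1}]$; these are essentially (scalar multiples of) two of the generators $u_0, u_1, u_2$. Then, using the recursion $A_{m+1} - A_{m-1} = [G_1, A_m]$ with $G_1 = \tfrac12[A_1,A_0]$ and the relations \eqref{uiRelations}, I would compute $\phi(G_1)$ and obtain a closed recursion for $\phi(A_m)$. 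The key structural fact to extract is that, after writing everything in the basis $\{u_0, u_1, u_2\}$ over the ring $R := k[t,t^{-1},(1-t)^{-1}]$, the elements $\phi(A_m)$ have coefficient functions that are controlled polynomials (Chebyshev-like, via the $g_l, h_l$ of \eqref{Am+lDef}) in a single rational function of $t$; one then shows these are $k$-linearly independent in $R$ by a degree/pole-order argument, e.g. examining the order of vanishing or the pole at $t = 0$, $t = 1$, or $t = \infty$.

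Concretely, I expect $\phi(A_m)$ to look like $\phi(A_m) = \rho^m(\text{something fixed}) + \dots$ where multiplication by the function $t$ (or $t', t''$) shifts the index, so that a nontrivial combination $\sum c_m \phi(A_m) = 0$ forces a nontrivial polynomial identity $\sum c_m \lambda(t)^m = 0$ in $R$ for some nonconstant $\lambda(t) \in R$ — impossible unless all $c_m$ vanish. This is the heart of the matter. Having linear independence of $\{\phi(A_m)\}$, any $0 \neq x \in \mathcal{O}_A$ has $\phi(x) \neq 0$; combined with Lemma \ref{LemmaJOAinter} applied to $J = \ker\phi$, we get $\ker\phi \cap \mathcal{O}_A = 0$ hence $\ker\phi = 0$, i.e. $\phi$ is injective.

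\textbf{Main obstacle.} The delicate step is the bookkeeping that turns the $\phi(A_m)$ into an explicit $R$-linear combination of $u_0, u_1, u_2$ and then recognizes the coefficients as powers (or Chebyshev polynomials) of a single element of $R$, so that linear independence over $k$ becomes visible. One must be careful that the recursion does not collapse the indices (which would happen if, say, $\phi(G_1)$ acted nilpotently rather than semisimply on the relevant span); verifying that $\operatorname{ad}_{\phi(G_1)}$ genuinely has distinct "eigenvalues" — equivalently that the generating function of the $g_l, h_l$ involves a transcendental/nonconstant rational function of $t$ — is where the real content lies. Everything else (that $\ker\phi$ is an ideal, the reduction via Lemma \ref{LemmaJOAinter}) is formal.
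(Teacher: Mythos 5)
Your reduction is exactly the paper's: $\ker\phi$ is an ideal, Lemma \ref{LemmaJOAinter} forces a nontrivial ideal to meet $\mathcal{O}_A$, so injectivity of $\phi$ follows from injectivity of $\phi|_{\mathcal{O}_A}$, which you propose to get from linear independence of the images of a basis of $\mathcal{O}_A$. That part is fine and is formal, as you say.

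The gap is that the step you defer as the ``main obstacle'' is the entire content of the proof, and your guess at its structure is not quite right. The paper does not work with the $\phi(A_m)$ and does not produce a relation of the form $\sum c_m\lambda(t)^m=0$ for a single $\lambda$: the operator $\ad_{\phi(G_1)}=\ad_{4u_0(t-1)}$ does not act by a scalar on anything in sight; it swaps the two lines $Ru_1$ and $Ru_2t$ (up to polynomial factors), via $[u_0(t-1),u_1]=-u_2t(t-1)$ and $[u_0(t-1),u_2t]=u_1t$. The paper's resolution is to use the basis $\{(\ad_{G_1})^m(A_0+A_1),\ (\ad_{G_1})^m(A_0-A_1)\mid m\ge 0\}$ of $\mathcal{O}_A$, whose images are (up to sign and scalars) $u_1(t(t-1))^m$, $u_2(t(t-1))^{m+1}$, $u_2t(t(t-1))^m$, $u_1t(t(t-1))^m$; since $u_1,u_2$ belong to a free $k[t,t^{-1},(1-t)^{-1}]$-module basis and the coefficient polynomials have pairwise distinct leading powers of $t$ within each of the two components, linear independence over $k$ is immediate. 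So your degree-in-$t$ instinct is the right one, but you would need to (a) switch to the $(\ad_{G_1})^m(A_0\pm A_1)$ basis (or otherwise organize the two-step swap), and (b) actually carry out the computation of $\phi(A_0\pm A_1)$ and $\phi(G_1)$ in terms of $u_0,u_1,u_2$; without that the proof is a plan, not an argument. Your worry about $\ad_{\phi(G_1)}$ acting nilpotently is answered by the explicit computation: its square multiplies each of $u_1$ and $u_2t$ by $t(t-1)$, which is not nilpotent in the Laurent-polynomial ring.
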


\begin{proof} Without loss of generality, it is sufficient to prove the lemma for $(h,i,j,k)$ $= (1,2,0,3)$. Notice that it follows from relation (\ref{Onsrel2}) that $\ad_{G_1}|_{\mathcal{O}_A} : \mathcal{O}_A \rightarrow \mathcal{O}_A$ and $\ad_{A_0}|_{\mathcal{O}_G} : \mathcal{O}_G \rightarrow \mathcal{O}_A$ are injective.

Note that if $\phi$ is not injective, then its kernel, $K$, would be a nontrivial ideal of $\mathcal{O}$. From Lemma \ref{LemmaJOAinter}, we know that $K \cap \mathcal{O}_A \neq 0$, thus $\ker(\phi|_{\mathcal{O}_A})$ is nontrivial; i.e., $\phi|_{\mathcal{O}_A}$ is not injective. Hence, in order to show that $\phi$ is injective, it suffices to show that $\phi|_{\mathcal{O}_A}$ is injective. So, since \[\{(\ad_{G_1})^m(A_0 + A_1), (\ad_{G_1})^m(A_0 - A_1) \mid m \geq 0\}\] is a basis of $\mathcal{O}_A$, it remains to show that \[\{\phi((\ad_{G_1})^m(A_0 + A_1)), \phi(\ad_{G_1})^m(A_0 - A_1)) \mid m \geq 0 \}\] is linearly independent in $\mathfrak{sl}_2 \otimes k[t,t^{-1},(1-t)^{-1}]$.

From (\ref{u2}), we note that
\begin{align*}
    4u_2t &= y \otimes t + z \otimes t't + x \otimes t(t'-1) \\
                &= y \otimes t + z \otimes (1-t^{-1})t + x \otimes t(1-t^{-1}-1) \\
                &= y \otimes t + z \otimes (t-1) - x \otimes 1,
\end{align*}
and from (\ref{u1}),
\begin{align*}
    4u_1    &= x \otimes 1 + y \otimes t + z \otimes(t-1).
\end{align*}
Hence, we have that \[\phi(A) = \psi(X_{12}) = x \otimes 1 = 2(u_1 - u_2t)\] and \[\phi(B) = \psi(X_{03}) = y \otimes t + z \otimes (t-1) = 2(u_1 + u_2t).\]

Furthermore, using \eqref{uiRelations} and recalling that $A_0 = A$ and $A_1 = B$, it follows that
\begin{align*}
    \phi(G_1) &= \frac{1}{2} \phi([A_1,A_0]) = \frac{1}{2} [\phi(A_1),\phi(A_0)] = \frac{1}{2} [\psi(X_{03}),\psi(X_{12})] \\
                        &= \frac{1}{2} [2(u_1 + u_2t),2(u_1 - u_2t)] = 2 [u_1 + u_2t, u_1 - u_2t] \\
                        &= 2 (u_0tt' + u_0tt') = 4u_0tt' = 4u_0(t-1),\\
    \phi(A_0 + A_1) &= 2(u_1 - u_2t) + 2(u_1 + u_2t) = 4u_1,\\
    \phi(A_0 - A_1) &= 2(u_1 - u_2t) - 2(u_1 + u_2t) = -4u_2t.
\end{align*}
So, we want to check if \[\{(\ad_{u_0(t-1)})^m(u_1), (\ad_{u_0(t-1)})^m(u_2t) \mid m \geq 0 \}\] is linearly independent over $k$. Since
\begin{align*}
    [u_0(t-1),u_1] &= -u_2t(t-1), \\
    [u_0(t-1),u_2t] &= u_1t''(t-1)t \\
                                    &= u_1(1-t)^{-1}(1-t)t \\
                                    &= u_1t,
\end{align*}
it follows that
\begin{align*}
    (\ad_{u_0(t-1)})^{2m} (u_1) &= u_1(t(t-1))^m = u_1(t^{2m} + \mbox{(lower terms in $t$)}), \\
    (\ad_{u_0(t-1)})^{2m+1} (u_1) &= -u_2(t(t-1))^{m+1} = -u_2(t^{2m+2} + \mbox{(lower terms in $t$)}), \\
    (\ad_{u_0(t-1)})^{2m} (u_2t) &= -u_2t(t(t-1))^m = -u_2(t^{2m+1} + \mbox{(lower terms in $t$)}),\\
    (\ad_{u_0(t-1)})^{2m+1} (u_2t) &= u_1t(t(t-1))^m = u_1(t^{2m+1} + \mbox{(lower terms in $t$)}),
\end{align*}
and these elements are linearly independent over $k$. This leads to the conclusion that $\phi$ as defined in the lemma is indeed injective.
\end{proof}

\begin{prop} \label{tetraisomOns} For mutually distinct $h,i,j,k \in \{0,1,2,3\}$ the subalgebra of $\boxtimes$ generated by $X_{hi}$, $X_{jk}$ is isomorphic to $\mathcal{O}$.
\end{prop}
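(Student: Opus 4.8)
The statement to prove is that for mutually distinct $h,i,j,k \in \{0,1,2,3\}$, the subalgebra $S$ of $\boxtimes$ generated by $X_{hi}$ and $X_{jk}$ is isomorphic to the Onsager algebra $\mathcal{O}$. I already have, from the preceding lemmas, a Lie algebra homomorphism $\varphi : \mathcal{O} \to \boxtimes$ sending $A \mapsto X_{hi}$, $B \mapsto X_{jk}$. Since $A$ and $B$ generate $\mathcal{O}$, the image $\varphi(\mathcal{O})$ is precisely the subalgebra generated by $X_{hi}$ and $X_{jk}$, i.e. $\varphi(\mathcal{O}) = S$. So $\varphi$ corestricts to a surjective homomorphism $\mathcal{O} \twoheadrightarrow S$, and the only thing left to establish is that this map is injective. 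The plan, therefore, is to reduce injectivity of $\varphi$ to injectivity of the already-established map $\phi$ of Lemma \ref{Lemma2.8Injective}.

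The key observation is that $\phi$ factors through $\varphi$: by Hartwig--Terwilliger/Elduque the map $\psi : \boxtimes \to \mathfrak{sl}_2 \otimes k[t,t^{-1},(1-t)^{-1}]$ is an isomorphism, and $\phi$ is by definition the composite $\psi \circ \varphi$ (both send $A \mapsto \psi(X_{hi})$ and $B \mapsto \psi(X_{jk})$, and a homomorphism out of $\mathcal{O}$ is determined by the images of $A$ and $B$). Hence $\ker \varphi \subseteq \ker(\psi \circ \varphi) = \ker \phi = \{0\}$ by Lemma \ref{Lemma2.8Injective}. Therefore $\varphi$ is injective. Combined with the surjectivity onto $S$ noted above, $\varphi$ restricts to an isomorphism $\mathcal{O} \xrightarrow{\ \sim\ } S$.

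Concretely I would write the proof in this order: (1) invoke the homomorphism $\varphi$ from the earlier lemma and note $\varphi(\mathcal{O}) = S$ because $A,B$ generate $\mathcal{O}$; (2) note that $\psi \circ \varphi$ and $\phi$ agree on the generators $A, B$ of $\mathcal{O}$, hence $\psi \circ \varphi = \phi$; (3) since $\psi$ is an isomorphism (\cite[Theorem 11.5]{HartwigBTerwilligerP07}, \cite[Corollary 2.9]{ElduqueA07}) and $\phi$ is injective (Lemma \ref{Lemma2.8Injective}), conclude $\varphi$ is injective; (4) conclude $\varphi|^{S} : \mathcal{O} \to S$ is a bijective homomorphism, hence an isomorphism.

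The only mildly delicate point — not really an obstacle — is making sure the reduction to $(h,i,j,k) = (1,2,0,3)$ used in Lemma \ref{Lemma2.8Injective} genuinely covers the general case here; this is handled by the "without loss of generality" remark in that lemma, which relies on the symmetry of the defining relations of $\boxtimes$ under permutations of the index set $\{0,1,2,3\}$, so no new work is needed. Everything else is a short diagram chase, and the real content has already been carried out in the injectivity lemma. I do not expect any genuine difficulty in this proposition; it is a packaging of the earlier results.
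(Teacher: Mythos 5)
Your proposal is correct and follows essentially the same route as the paper: the paper's proof also notes that the epimorphism $\mathcal{O} \to \Omega$ sending $A \mapsto X_{hi}$, $B \mapsto X_{jk}$ is injective by Lemma \ref{Lemma2.8Injective}, hence an isomorphism. You merely make explicit the factorization $\phi = \psi \circ \varphi$ through the isomorphism $\psi$, which the paper leaves implicit.
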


\begin{proof} Let $\Omega$ denote the subalgebra of $\boxtimes$ generated by $X_{hi}$, $X_{jk}$. It follows from Lemma \ref{Lemma2.8Injective} that the epimorphism $\mathcal{O} \rightarrow \Omega$ such that $A \mapsto X_{hi}$ and $B \mapsto X_{jk}$ is injective. Hence, $\mathcal{O} \cong \Omega$. \end{proof}

\begin{rmk} In \cite[Proposition 7.8]{HartwigBTerwilligerP07}, it is shown that $\boxtimes = \Omega \oplus \Omega' \oplus \Omega'' \cong \mathcal{O} \oplus \mathcal{O} \oplus \mathcal{O}$ where $\Omega$ (respectively $\Omega'$, $\Omega''$) denotes the subalgebra of $\boxtimes$ generated by $X_{12}$ and $X_{03}$ (respectively by  $X_{23}$ and $X_{01}$, and by $X_{31}$ and $X_{02}$).
\end{rmk}

In \cite[Prop. 2.6(i)]{ElduqueA07}, it is shown that $\psi (\Omega) = u_0(t-1)k[t] \oplus u_1k[t] \oplus u_2tk[t]$. Hence, if \[v_0 = u_0(t-1), ~ v_1 = u_1 ~ \text{ and } ~ v_2 = u_2t,\] the Onsager algebra $\mathcal{O}$ can be identified with $v_0 k[t] \oplus v_1 k[t] \oplus v_2 k[t]$ where $v_0$, $v_1$ and $v_2$ freely generate $\mathcal{O}$ as a $k[t]$-algebra and, by (\ref{uiRelations}), its $k[t]$-linear Lie bracket satisfies the following relations:
\begin{align}
[v_0,v_1] = -v_2(t-1),\qquad [v_1,v_2] = -v_0, \qquad [v_2,v_0] = v_1t. \label{virels}
\end{align}

\begin{prop} $\mathcal{O}$ is generated, as a Lie algebra over $k$, by $v_0$, $v_1$ and $v_2$.
\end{prop}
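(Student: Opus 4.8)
The plan is to let $L$ denote the Lie subalgebra of $\mathcal{O}$ over $k$ generated by $v_0$, $v_1$, $v_2$, and to show $L = \mathcal{O}$. Since, by the preceding discussion, $\mathcal{O} = v_0 k[t] \oplus v_1 k[t] \oplus v_2 k[t]$, the set $\{v_i t^n \mid i \in \{0,1,2\},\ n \geq 0\}$ is a $k$-basis of $\mathcal{O}$, so it suffices to prove that $v_i t^n \in L$ for all $i \in \{0,1,2\}$ and all $n \geq 0$.

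First I would record the consequences of the $k[t]$-bilinearity of the bracket together with the relations (\ref{virels}): for all $a,b \geq 0$,
\begin{align*}
	[v_2 t^a, v_0 t^b] &= v_1 t^{a+b+1}, \\
	[v_0 t^a, v_1 t^b] &= -v_2 t^{a+b+1} + v_2 t^{a+b}, \\
	[v_1 t^a, v_2 t^b] &= -v_0 t^{a+b}.
\end{align*}
Every element occurring here lies in $\mathcal{O}$, so whenever both arguments of one of these brackets lie in $L$, so does the result.

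Then I would argue by induction on $n$ that $v_0 t^n, v_1 t^n, v_2 t^n \in L$. The case $n = 0$ is immediate from the definition of $L$. For the inductive step, assuming $v_i t^m \in L$ for all $i$ and all $m \leq n$, I would use $v_1 t^{n+1} = [v_2, v_0 t^n] \in L$; then the identity $[v_0, v_1 t^n] = -v_2 t^{n+1} + v_2 t^n$ together with $v_2 t^n \in L$ gives $v_2 t^{n+1} \in L$; and finally $v_0 t^{n+1} = -[v_1, v_2 t^{n+1}] \in L$, using the element $v_2 t^{n+1}$ just obtained. This completes the induction, whence $L = \mathcal{O}$.

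The one point that requires attention — and the only thing resembling an obstacle — is that $L$ is a Lie algebra over $k$, not over $k[t]$, so the powers of $t$ cannot be introduced by scalar multiplication; they must be produced purely through iterated brackets, which is exactly what the first two identities above accomplish. Apart from this, the argument is an elementary induction with no computational difficulty.
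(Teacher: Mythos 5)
Your proof is correct and follows essentially the same route as the paper: both arguments induct on the power of $t$, obtaining $v_1t^{n+1}$ from $[v_2,v_0t^n]$, extracting $v_2t^{n+1}$ from the bracket of $v_0$ and $v_1$ (which produces $v_2t^{n+1}-v_2t^n$) using the induction hypothesis, and then recovering $v_0t^{n+1}$ by one more bracket with $v_2t^{n+1}$. Your write-up just makes explicit the induction that the paper's proof leaves implicit.
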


\begin{proof} Consider the subalgebra V of $\mathcal{O}$ generated by $v_0$, $v_1$ and $v_2$. Note that for any $n \in \mathbb{N}$,
\begin{align*}
    (\ad_{v_2})^2(v_0t^n) = (\ad_{v_2})(v_1t^{n+1}) = v_0t^{n+1} \in V.
\end{align*}
Furthermore, $[v_2,v_0t^n] = v_1t^{n+1} \in V$ and $[v_1,v_0t^n] = v_2t^n(t-1) \in V$. Hence, all generators of the $k$-vector space, $v_ik[t]$ for $i \in \{0,1,2\}$, are included in $V$ and, since $\mathcal{O}$ is identified with $v_0 k[t] \oplus v_1 k[t] \oplus v_2 k[t]$, it follows that $\mathcal{O} = V$. 
\end{proof}

\begin{rmk}[Tetrahedron algebra in figures] Note that the structure of the tetrahedron algebra can be summarized in the following figure which the reader will easily recognize as a tetrahedron.

\begin{figure}[ht]
	\centering
		\includegraphics[width=7cm]{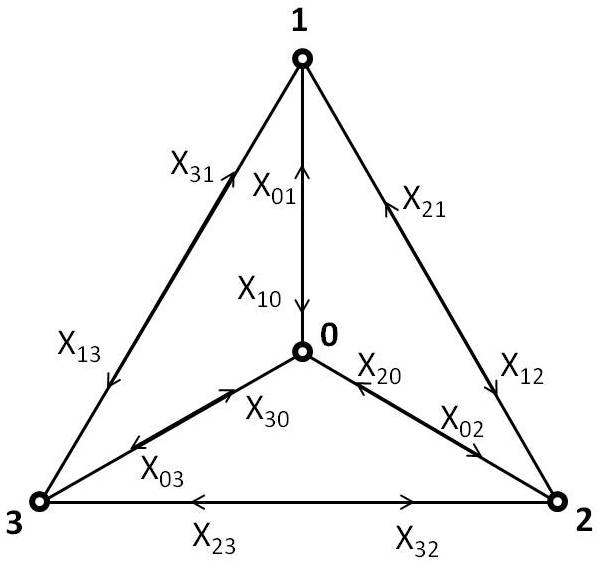}
	\caption{Tetrahedron algebra in figures.}
	\label{fig:tetrahedron}
\end{figure}

\noindent Here one notes that the relation (\ref{tetra1}) is illustrated by the fact that the labels on each edge add up to zero. Remark \ref{notesl2copy} is illustrated by the fact that the labels of every three edges, forming a face of the tetrahedron, are elements of an equitable basis of a subalgebra isomorphic to $\mathfrak{sl}_2$. And finally the labels of every two non-adjacent edges generate a subalgebra isomorphic to the Onsager algebra (hence illustrating Proposition \ref{tetraisomOns}). 
\end{rmk}

\section{Ideals of the Onsager Algebra}
\label{sec:IdealsOfTheOnsagerAlgebra}

In this section, we utilize the relationship between the tetrahedron algebra and the Onsager algebra to determine the ideals and in particular the closed ideals of the latter. We first note that for any ideal $J$ of $k[t]$, $\mathcal{O}J$ is an ideal of $\mathcal{O}$.

\begin{prop} \emph{(\cite[Proposition 5.6]{ElduqueA07})} Let $I$ be an ideal of $\mathcal{O}$ and consider the following subspace of $k[t]$:
\begin{align}
J_I = \{p(t) \in k[t] ~| ~\exists ~p_1(t), p_2(t) \in k[t], ~ v_0 p(t) + v_1 p_1(t) + v_2 p_2(t) \in I\}. \label{JI}
\end{align}
Then
\begin{enumerate}
    \item[(i)] $J_I$ is an ideal of $k[t]$.
    \item[(ii)] $I$ lies between $\mathcal{O}J_It(t-1)$ and $\mathcal{O}J_I$, i.e., $\mathcal{O}J_It(t-1) \subseteq I \subseteq \mathcal{O}J_I$.
\end{enumerate}
\end{prop}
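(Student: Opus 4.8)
The plan is to carry out everything inside the concrete model $\mathcal{O} = v_0 k[t] \oplus v_1 k[t] \oplus v_2 k[t]$ established above, using that the bracket is $k[t]$-bilinear and completely determined by (\ref{virels}): $[v_0,v_1] = -v_2(t-1)$, $[v_1,v_2] = -v_0$, $[v_2,v_0] = v_1 t$ (together with the derived identities $[v_1,v_0]=v_2(t-1)$, $[v_2,v_1]=v_0$, $[v_0,v_2]=-v_1 t$). All assertions then reduce to a handful of bracket computations. For (i): $J_I$ is visibly a $k$-subspace, since adding or $k$-scaling the defining elements $v_0 p + v_1 p_1 + v_2 p_2 \in I$ produces elements of the same shape in $I$. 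Because $k[t]$ is generated as a $k$-algebra by $t$, it then remains only to check that $J_I$ is closed under multiplication by $t$: given $p \in J_I$ with witness $w = v_0 p + v_1 p_1 + v_2 p_2 \in I$, the element $(\ad_{v_2})^2(w) = v_0(tp) + v_1(tp_1)$ lies in $I$ (as $I$ is an ideal) and exhibits $tp \in J_I$.

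For the inclusion $I \subseteq \mathcal{O}J_I$ in (ii): given $x = v_0 q_0 + v_1 q_1 + v_2 q_2 \in I$, I would show each $q_i \in J_I$. The coefficient $q_0$ lies in $J_I$ by definition; $[v_2,x] = v_0 q_1 + v_1(tq_0) \in I$ shows $q_1 \in J_I$; and $[v_1,x] = -v_0 q_2 + v_2(t-1)q_0 \in I$ shows $q_2 \in J_I$. Since freeness over $k[t]$ gives $\mathcal{O}J_I = v_0 J_I \oplus v_1 J_I \oplus v_2 J_I$, this yields $x \in \mathcal{O}J_I$.

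The inclusion $\mathcal{O}J_I t(t-1) \subseteq I$ is the heart of the matter. Writing $\mathcal{O}J_I t(t-1) = v_0(J_I t(t-1)) \oplus v_1(J_I t(t-1)) \oplus v_2(J_I t(t-1))$, it suffices to prove $v_i\, t(t-1)p \in I$ for $i=0,1,2$ and every $p \in J_I$. Fixing such a $p$ with a witness $w = v_0 p + v_1 p_1 + v_2 p_2 \in I$, the strategy is to build, from $w$, elements of $I$ whose unwanted $v_1$- and $v_2$-components (those carrying the uncontrolled auxiliary coefficients $p_1,p_2$) have been bracketed away, the price being exactly the factors $t$ (entering through $[v_2,v_0]=v_1 t$) and $t-1$ (entering through $[v_0,v_1]=-v_2(t-1)$). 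Concretely, $(\ad_{v_1})^2(w) = -v_0(t-1)p - v_2(t-1)p_2$; applying $\ad_{v_2}$ gives $-v_1 t(t-1)p$, and applying $\ad_{v_2}$ once more gives $-v_0 t(t-1)p$; separately, $(\ad_{v_2})^2(w) = v_0(tp) + v_1(tp_1)$ and then $\ad_{v_1}$ of this equals $v_2 t(t-1)p$. All of these lie in $I$ because $I$ is an ideal, so $v_0 t(t-1)p$, $v_1 t(t-1)p$, $v_2 t(t-1)p \in I$, which completes (ii).

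The main obstacle is precisely this last step: one must discover the right chains of adjoint operators that simultaneously annihilate the $p_1$- and $p_2$-contributions while surviving on the $v_i p$ term, and the fact that the surviving factor is exactly $t(t-1)$ (rather than a larger polynomial) must be traced back to the exact structure constants in (\ref{virels}) — in particular to $v_0 = u_0(t-1)$ and $v_2 = u_2 t$ having "absorbed" the denominators $(1-t)^{-1}$ and $t^{-1}$. Once the model $\mathcal{O} \cong v_0 k[t] \oplus v_1 k[t] \oplus v_2 k[t]$ and the $k[t]$-bilinearity of the bracket are in hand, the remaining verifications are routine.
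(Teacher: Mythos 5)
Your proposal is correct and follows essentially the same route as the paper's proof: $(\ad_{v_2})^2$ for closure of $J_I$ under multiplication by $t$, the brackets $[v_2,x]$ and $[v_1,x]$ for $I\subseteq\mathcal{O}J_I$, and iterated adjoints that kill the auxiliary coefficients at the cost of a factor $t(t-1)$ for the reverse inclusion. The only difference is cosmetic: the paper uses the chains $\ad_{v_0}\ad_{v_1}$, $\ad_{v_2}\ad_{v_0}\ad_{v_1}$ and $\ad_{v_0}\ad_{v_2}$ where you use $\ad_{v_2}(\ad_{v_1})^2$, $(\ad_{v_2})^2(\ad_{v_1})^2$ and $\ad_{v_1}(\ad_{v_2})^2$, and all of your bracket computations check out against \eqref{virels}.
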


\begin{proof}

\noindent (i) Let $p(t) \in J_I$. So there exist $p_1(t)$ and $p_2(t) \in k[t]$ such that
\begin{center}
$x = v_0p(t) + v_1p_1(t) + v_2p_2(t) \in I$.
\end{center}
But since
\begin{align*}
    (\ad_{v_2})^2(x) = (\ad_{v_2})(v_1tp(t) + v_0p_1(t)) = v_0tp(t) + v_1tp_1(t) \in I,
\end{align*}
it follows that $tp(t) \in J_I$. Since $J_I$ is clearly a subspace, it follows that $J_I$ is an ideal of $k[t]$.

\noindent (ii) Let $x = v_0p_0(t) + v_1p_1(t) + v_2p_2(t) \in I$. From (\ref{JI}), $p_0(t) \in J_I$. Furthermore, since
\begin{center}
$[v_2,x] = v_1tp_0(t) + v_0p_1(t) \in I$

and $\quad [-v_1,x] = -v_2p_0(t)(t-1) + v_0p_2(t) \in I$,
\end{center}
then $p_1(t)$ and $p_2(t)$ also belong to $J_I$. Hence $x \in v_0J_I \oplus v_1J_I \oplus v_2J_I = \mathcal{O}J_I$, and so $I \subseteq \mathcal{O}J_I$.

Moreover, since
\begin{align*}
    [v_0,[v_1,x]] &= v_1p_0(t)t(t-1) \in I, \\
    [v_2,[v_0,[v_1,x]]] &= v_0p_0(t)t(t-1) \in I, \\
    [-v_0,[v_2,x]] &= v_2p_0(t)t(t-1) \in I,
\end{align*}
it follows that $\mathcal{O}J_It(t-1) = v_0J_It(t-1) \oplus v_1J_It(t-1) \oplus v_2J_It(t-1) \subseteq I$, which leads to the conclusion that $\mathcal{O}J_It(t-1) \subseteq I \subseteq \mathcal{O}J_I$. \end{proof}

Next, consider the opposite direction. Given a non-zero ideal $J = q(t)k[t]$ of $k[t]$, what are the possible forms for an ideal $I$ of $\mathcal{O}$ with $J = J_I$?

\begin{prop} \emph{(\cite[Proposition 5.8]{ElduqueA07})} \label{Prop5.8Ideals} Let $J = q(t)k[t]$ be a non-zero ideal of k[t]. Then the ideals $I$ of $\mathcal{O}$ with $J = J_I$ are the subspaces
\begin{center}
$I = \mathcal{O}Jt(t-1) \oplus S$,
\end{center}
where $S$ is of one of the following types (for $w_i = v_iq(t), i = 0,1,2$):
\begin{enumerate}
    \item[(i)] $S = k\epsilon(w_0t + w_1t) \oplus k\delta(w_0t - w_1t) \oplus k\epsilon\delta\gamma w_2t \oplus k\epsilon'(w_0(t-1) + w_2(t-1)) \oplus k\delta'(w_0(t-1) - w_2(t-1)) \oplus k\epsilon'\delta'\gamma'w_1(t-1)$ where $\epsilon, \delta, \gamma, \epsilon', \delta', \gamma'$ are either $0$ or $1$, with $\epsilon + \delta \neq 0 \neq \epsilon' + \delta'$ (as $J = J_I$).
    \item[(ii)] $S = S_{\eta} = \Span\{w_0t, w_1t, w_0(t-1), w_2(t-1), w_2t + \eta w_1(t-1)\}$, with $0 \neq \eta \in k$.
\end{enumerate}
\end{prop}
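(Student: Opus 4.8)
The plan is to work inside the identification $\mathcal{O} = v_0k[t]\oplus v_1k[t]\oplus v_2k[t]$ with the $k[t]$-linear bracket (\ref{virels}), fix a non-zero ideal $J=q(t)k[t]$ of $k[t]$, and analyze an arbitrary ideal $I$ of $\mathcal{O}$ with $J_I=J$. From the previous proposition we already know $\mathcal{O}Jt(t-1)\subseteq I\subseteq \mathcal{O}J$, so the whole question reduces to understanding the quotient $\mathcal{O}J/\mathcal{O}Jt(t-1)$, which as a $k$-vector space is spanned by the images of $w_i$, $w_it$, $w_i(t-1)$ for $i=0,1,2$ (using $t^2 = t(t-1)+t$ to reduce every higher power modulo $t(t-1)$). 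So $I/\mathcal{O}Jt(t-1)$ is a subspace $S$ of this finite-dimensional space, and the constraint ``$I$ is an ideal'' becomes a constraint on $S$ under the induced (nilpotent) bracket of $\mathcal{O}J/\mathcal{O}Jt(t-1)$ on itself and under the action of $\mathcal{O}$.

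First I would record the bracket relations in the quotient: using (\ref{virels}) one computes $[v_i, w_j r(t)]$ for $r\in\{1,t,t-1\}$ and reduces mod $t(t-1)$; the key outputs are things like $[v_2,w_0]=w_1t$, $[v_2,w_0t]=w_1t$ (since $t^2\equiv t$), $[v_1,w_0]=-w_2(t-1)$, $[v_1,w_0t]= -w_2\cdot t(t-1)\equiv 0$, $[v_0,w_1]=-w_2(t-1)$, $[v_0,w_1(t-1)]=0$, etc. The point is that acting by $\mathrm{ad}_{v_i}$ shuffles the six ``boundary'' generators $\{w_0t,w_1t,w_0(t-1),w_2(t-1)\}\cup\{w_2t,w_1(t-1)\}$ among themselves (up to the middle terms $w_i$ which map into them), and that the $w_i$ themselves are \emph{not} in $I$ — otherwise, chasing brackets, one sees $q(t)$ would already be killed modulo something strictly dividing it, contradicting $J_I=J$; more precisely $w_0\in I$ would force $w_1t, w_2(t-1)\in I$ and then $w_0 q(t)$-divisibility by $t(t-1)$... this is the spot where one must argue carefully that $J_I=J$ forces $I$ to contain no element whose $v_0$-component is a strict multiple-refinement of $q(t)$, pinning down that $I$ is determined by a subspace of the $6$-dimensional ``boundary'' space together with possibly the single ``mixed'' vector $w_2t+\eta w_1(t-1)$.

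Next I would split into cases according to how $S$ meets the two $3$-dimensional ``triangular'' pieces. The relation (\ref{tetra2})/(\ref{virels}) shows that $\{w_0t,w_1t,w_2t\}$ and $\{w_0(t-1),w_1(t-1),w_2(t-1)\}$ each span a copy of $\mathfrak{sl}_2$-like bracket structure (a Heisenberg-type quotient). Being an ideal of $\mathcal{O}$ (equivalently stable under $\mathrm{ad}_{v_0},\mathrm{ad}_{v_1},\mathrm{ad}_{v_2}$) forces the intersection of $S$ with each triangle to be a coordinate-type subspace, producing the $0/1$ exponents $\epsilon,\delta,\gamma$ and $\epsilon',\delta',\gamma'$; the conditions $\epsilon+\delta\neq 0\neq\epsilon'+\delta'$ come from $J_I=J$ (if both were $0$ in a triangle, the $v_0$-part of every element of $S$ lands in $Jt(t-1)$ on that side and one loses generators of $J$). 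The only way to get a subspace that is $\mathrm{ad}$-stable but \emph{not} a sum of coordinate lines across the two triangles is the ``diagonal'' possibility $w_2t+\eta w_1(t-1)$ with $\eta\neq 0$, which by the bracket computations is forced to come packaged exactly with $w_0t,w_1t,w_0(t-1),w_2(t-1)$, giving type (ii) $S=S_\eta$. Finally I would verify the converse: each listed $S$ does give an ideal $I=\mathcal{O}Jt(t-1)\oplus S$ with $J_I=J$, which is a direct check using (\ref{virels}) that $[v_i, S]\subseteq \mathcal{O}Jt(t-1)\oplus S$ for $i=0,1,2$.

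The main obstacle I expect is the bookkeeping in the case analysis: showing that $\mathrm{ad}$-stability of a subspace $S$ of the $6$- (or $7$-) dimensional quotient space forces exactly the coordinate-plus-one-diagonal shape, and correctly tracking how the ``interior'' elements $w_i$ feed into the boundary under brackets so that no exotic subspace survives. The algebra is elementary but the combinatorics of which brackets land where (and the interplay with the $J_I=J$ normalization excluding $\epsilon+\delta=0$) is where the real work lies; everything else is routine verification using (\ref{virels}) and the sandwiching $\mathcal{O}Jt(t-1)\subseteq I\subseteq\mathcal{O}J$ from the preceding proposition.
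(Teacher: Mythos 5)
Your overall strategy coincides with the paper's: sandwich $I$ between $\mathcal{O}Jt(t-1)$ and $\mathcal{O}J$ using the preceding proposition, pass to the quotient $\mathcal{O}J/\mathcal{O}Jt(t-1)$, and classify the $\mathcal{O}$-stable subspaces there. However, the decisive step --- proving that stability under the adjoint action forces $S$ to be one of the listed coordinate-plus-diagonal subspaces and nothing else --- is exactly the part you defer to ``bookkeeping,'' and no mechanism is supplied for it. A priori, classifying all subspaces of a $6$-dimensional space invariant under several operators could admit many continuous families; you correctly guess that only the one family $S_\eta$ survives, but you do not say why. The paper's organizing device, which your plan is missing, is to observe that $\ad_{\overline{v_2t}}$ and $\ad_{\overline{v_1(t-1)}}$ act diagonalizably on $\mathcal{O}J/\mathcal{O}Jt(t-1)$ with eigenvalues $0,\pm 1$, where the $\pm 1$ eigenspaces are the lines $k(\overline{w_0t\pm w_1t})$ and $k(\overline{w_0(t-1)\pm w_2(t-1)})$ and the $0$-eigenspaces contain the remaining generators. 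Since any invariant subspace is the direct sum of its intersections with the eigenspaces of a semisimple operator acting on it, $S$ must be spanned by a subset of these four lines together with a subspace of $k\overline{w_2t}\oplus k\overline{w_1(t-1)}$; the generation statements ($\overline{w_2t}$ generates all of $\bigoplus_i k\overline{w_it}$, and $\overline{w_1(t-1)}$ generates $\bigoplus_i k\overline{w_i(t-1)}$) then immediately yield cases (i) and (ii). Without this eigenspace argument your case analysis has no backbone.

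Two smaller points. First, the quotient $\mathcal{O}J/\mathcal{O}Jt(t-1)$ is exactly $6$-dimensional, with basis $\{\overline{w_it},\overline{w_i(t-1)}\}_{i=0,1,2}$ coming from $k[t]=t(t-1)k[t]\oplus kt\oplus k(t-1)$; the classes $\overline{w_i}=\overline{w_it}-\overline{w_i(t-1)}$ are not extra ``interior'' generators, so your hesitation between $6$ and $7$ dimensions and the discussion of how the $w_i$ ``feed into the boundary'' reflects a confusion that should be cleared up before the case analysis can be carried out. Second, your sketch of why $\epsilon+\delta\neq 0\neq\epsilon'+\delta'$ is in the right spirit but should be made precise as in the paper: if, say, $\epsilon'=\delta'=0$, then every $p(t)\in J_I$ is forced to be divisible by $t$ (write $p(t)=q(t)r(t)$ and observe that $v_0p(t)+v_1p_1(t)+v_2p_2(t)\in I=\mathcal{O}Jt(t-1)\oplus S$ requires $t\mid r(t)$), so $J_I=tq(t)k[t]\neq J$, a contradiction.
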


\begin{proof} First, note that $k[t]$ decomposes into the following direct sum of subspaces:
\begin{center}
$k[t] = t(t-1)k[t] \oplus (kt \oplus k(t-1)).$
\end{center}
Hence, $\mathcal{O}$ can be written as a direct sum of vector spaces
\begin{align}
\mathcal{O} = \mathcal{O}t(t-1) \oplus \Span\{v_it,v_i(t-1) \mid i = 0,1,2\} \label{Odirsum}
\end{align}
and if $J = q(t)k[t]$ is a non-zero ideal of $k[t]$, then
\begin{align}
\mathcal{O}J = \mathcal{O}Jt(t-1) \oplus \Span\{v_iq(t)t,v_iq(t)(t-1) \mid i = 0,1,2\}. \label{OJdirsum}
\end{align}
Furthermore, for any non-zero ideal $J$ of $k[t]$,
\begin{align}
[\mathcal{O}t(t-1),\mathcal{O}J] \subseteq \mathcal{O}Jt(t-1).\label{inclOJt(t-1)}
\end{align}
So, there exist the following natural bijections:
\begin{center}
$\{$ideals $I$ of $\mathcal{O}$ with $\mathcal{O}Jt(t-1) \subseteq I \subseteq \mathcal{O}J\}$

$\updownarrow$

$\{\mathcal{O}$-submodules of $\mathcal{O}J/\mathcal{O}Jt(t-1)\}$

$\updownarrow$

$\{\mathcal{O}/\mathcal{O}t(t-1)$-submodules of $\mathcal{O}J/\mathcal{O}Jt(t-1)\},$
\end{center}
where the term ``$\mathcal{O}$-submodule" (respectively, ``$\mathcal{O}/\mathcal{O}t(t-1)$-submodule") means that we consider the adjoint action of $\mathcal{O}$ (respectively, of $\mathcal{O}/\mathcal{O}t(t-1)$) on the ideals $\mathcal{O}J$ and $\mathcal{O}Jt(t-1)$ and then take the quotient representation. 

Given an element $x \in \mathcal{O}$ (respectively, $x \in \mathcal{O}J$), let us denote by $\overline{x}$ its class modulo $\mathcal{O}t(t-1)$ (respectively, modulo $\mathcal{O}Jt(t-1)$). Thus, from (\ref{Odirsum}) and (\ref{OJdirsum}), for a nonzero ideal $J = q(t)k[t]$ of $k[t]$,
\begin{center}
$\mathcal{O}/\mathcal{O}t(t-1) = \bigoplus^2_{i=0} (k\overline{v_it} \oplus k\overline{v_i(t-1)})$,

\

$\mathcal{O}J/\mathcal{O}Jt(t-1) = \bigoplus^2_{i=0} (k\overline{w_it} \oplus k\overline{w_i(t-1)})$,
\end{center}
where $w_i = v_iq(t)$, $i=0,1,2$.

\

In order to determine the $\mathcal{O}/\mathcal{O}t(t-1)$-submodules of $\mathcal{O}J/\mathcal{O}Jt(t-1)$, one considers the action of $\mathcal{O}/\mathcal{O}t(t-1)$ on $\mathcal{O}J/\mathcal{O}Jt(t-1)$.

We begin by noting that since $[v_it, w_j(t-1)] = t(t-1)[v_i,w_j]$ and $[v_i(t-1), w_jt] = (t-1)t[v_i,w_j]$ belong to $\mathcal{O}Jt(t-1)$, then \[[\overline{v_it}, \overline{w_j(t-1)}] = [\overline{v_i(t-1)}, \overline{w_jt}] = 0.\]

It then remains to consider the action of $\overline{v_it}$ on $\overline{w_jt}$ and the action of $\overline{v_i(t-1)}$ on $\overline{w_j(t-1)}$ for all $i,j \in \{0,1,2\}$.
\begin{enumerate}
    \item $[\overline{v_it}, \overline{w_jt}]$
        \begin{enumerate}
            \item $[\overline{v_1t}, \overline{w_0t}] = [\overline{v_0t}, \overline{w_1t}] = 0$
            \item $[\overline{v_2t}, \overline{w_0t}] = \overline{w_1t^3} = \overline{w_1t}, \quad [\overline{v_0t}, \overline{w_2t}] = \overline{-w_1t}$
            \item $[\overline{v_2t}, \overline{w_1t}] = \overline{w_0t^2} = \overline{w_0t}, \quad [\overline{v_1t}, \overline{w_2t}] = \overline{-w_0t}$
        \end{enumerate}
    \item $[\overline{v_i(t-1)}, \overline{w_j(t-1)}]$
        \begin{enumerate}
            \item $[\overline{v_0(t-1)}, \overline{w_2(t-1)}] = [\overline{v_2(t-1)}, \overline{w_0(t-1)}] = 0$
            \item $[\overline{v_1(t-1)}, \overline{w_2(t-1)}] = -\overline{w_0(t-1)^2} = \overline{w_0(t-1)}, \newline [\overline{v_2(t-1)}, \overline{w_1(t-1)}] = \overline{-w_0(t-1)}$
            \item $[\overline{v_1(t-1)}, \overline{w_0(t-1)}] = \overline{w_2(t-1)^3} = \overline{w_2(t-1)}, \newline [\overline{v_0(t-1)}, \overline{w_1(t-1)}] = \overline{-w_2(t-1)}$
        \end{enumerate}
\end{enumerate}
From the above computations, one notes that $\overline{w_2t}$ generates the $\mathcal{O}$-submodule $\bigoplus^2_{i=0} k\overline{w_it}$ and that $\overline{w_1(t-1)}$ generates the $\mathcal{O}$-submodule $\bigoplus^2_{i=0} k\overline{w_i(t-1)}$. Furthermore, it follows that the eigenvalues of the action of $\overline{v_2t}$ on $\mathcal{O}J/\mathcal{O}Jt(t-1)$ are:
\begin{enumerate}
    \item[(i)] $0$, with eigenspace $k\overline{w_2t} \oplus (\bigoplus^2_{i=0} k\overline{w_j(t-1)})$,
    \item[(ii)] $1$, with eigenspace $k(\overline{w_0t + w_1t})$,
    \item[(iii)] $-1$, with eigenspace $k(\overline{w_0t - w_1t})$.
\end{enumerate}
It also follows from the above computations that the eigenvalues of the action of $\overline{v_1(t-1)}$ on $\mathcal{O}J/\mathcal{O}Jt(t-1)$ are:
\begin{enumerate}
    \item[(i)] $0$, with eigenspace $k\overline{w_1(t-1)} \oplus (\bigoplus^2_{i=0} k\overline{w_jt})$,
    \item[(ii)] $1$, with eigenspace $k(\overline{w_0(t-1) + w_2(t-1)})$,
    \item[(iii)] $-1$, with eigenspace $k(\overline{w_0(t-1) - w_2(t-1)})$.
\end{enumerate}

At this point, it is important to note that ideals $I$ of $\mathcal{O}$ with $J = J_I$ are subspaces of the form $I = \mathcal{O}Jt(t-1) \oplus S$ for some $S$. Our goal is to explore the possibilities for $S$. Since any $\mathcal{O}$-submodule  of $\mathcal{O}J/\mathcal{O}Jt(t-1)$ is the direct sum of its intersections with the previous eigenspaces, only the following four cases can occur:

\begin{enumerate}
    \item[(1)] $kw_2t$ and $kw_1(t-1)$ are both included in $S$.
    \item[(2)] Either $kw_2t$ or $kw_1(t-1)$ is included in $S$, but not both.
    \item[(3)] Neither $kw_2t$ nor $kw_1(t-1)$ are included in $S$.
    \item[(4)] A linear combination of $w_2t$ and $w_1(t-1)$ (excluding the three previous cases) is included in $S$.
\end{enumerate}

Note that since $w_2t$ generates the $\mathcal{O}$-submodule $\bigoplus^2_{i=0} kw_it$, if $kw_2t \subseteq S$ then $k(w_0t + w_1t)$ and $k(w_0t - w_1t)$ are included in $S$. Similarly, if $kw_1(t-1) \subseteq S$ then $k(w_0(t-1) + w_2(t-1))$ and $k(w_0(t-1) - w_2(t-1))$ are included in $S$.

Hence, in the case of (1), $S = k(w_0t + w_1t) \oplus k(w_0t - w_1t) \oplus kw_2t \oplus k(w_0(t-1) + w_2(t-1)) \oplus k(w_0(t-1) - w_2(t-1)) \oplus kw_1(t-1)$.

Next, consider case (2) when $kw_2t \subseteq S$. In this scenario, $S = k(w_0t + w_1t) \oplus k(w_0t - w_1t) \oplus kw_2t \oplus k\epsilon'(w_0(t-1) + w_2(t-1)) \oplus k\delta'(w_0(t-1) - w_2(t-1))$ where $\epsilon'$ and $\delta'$ are either 0 or 1.

\emph{Claim:} $\epsilon' + \delta' \neq 0$

Assume $\epsilon' + \delta' = 0$, i.e., $\epsilon' = \delta' = 0$. Hence, $S = k(w_0t + w_1t) \oplus k(w_0t - w_1t) \oplus kw_2t$. Consider an arbitrary $p(t) \in J = J_I$. Recall from (\ref{JI}) the definition \[J_I = \{p(t) \in k[t] ~| ~\exists ~p_1(t), p_2(t) \in k[t], ~ v_0 p(t) + v_1 p_1(t) + v_2 p_2(t) \in I\}.\] It follows that $p(t) = q(t)r(t)$ for some $r(t) \in k[t]$ and that there exist $p_1(t), p_2(t) \in k[t]$ such that $v_0p(t) + v_1p_1(t) + v_2p_2(t) \in I$. Since $w_0 = v_0q(t)$, we have that $w_0r(t) + v_1p_1(t) + v_2p_2(t) \in I$. This is only true if $r(t)$ is divisible by $t$; i.e., if and only if $p(t)$ is divisible by $t$. So, it follows that all elements of $J$ are multiples of $t$, i.e., $J = tq(t)k[t]$. But this contradicts the fact that $J = q(t)k[t]$. Hence proving our claim.

On the other hand, when $kw_1(t-1) \subseteq S$, it follows that $S = k\epsilon(w_0t + w_1t) \oplus k\delta(w_0t - w_1t) \oplus k(w_0(t-1) + w_2(t-1)) \oplus k(w_0(t-1) - w_2(t-1)) \oplus kw_1(t-1)$ where $\epsilon$ and $\delta$ are either $0$ or $1$. If $\epsilon + \delta = 0$, a similar argument as above would show that $J = (t-1)q(t)k[t]$, which again contradicts $J = q(t)k[t]$. Hence, in this scenario, $\epsilon + \delta \neq 0$.

For the case (3) where neither $kw_2t$ nor $kw_1(t-1)$ are included in $S$, we have that $S = k\epsilon(w_0t + w_1t) \oplus k\delta(w_0t - w_1t) \oplus k\epsilon'(w_0(t-1) + w_2(t-1)) \oplus k\delta'(w_0(t-1) - w_2(t-1))$ where $\epsilon, \delta, \epsilon', \delta'$ are either $0$ or $1$. We also note that, for the same arguments as in case (2) above, $\epsilon' + \delta' \neq 0 \neq \epsilon + \delta$. Cases (1) to (3) are then summarized in part (i) of the proposition.

And finally, in case (4), we simply have that $S = \Span\{w_0t, w_1t, w_0(t-1), w_2(t-1), w_2t + \eta w_1(t-1)\}$, with $0 \neq \eta \in k$. This case is exactly part (ii) of the proposition.
\end{proof}

In view of the equivalence between ideals $I$ of $\mathcal{O}$ with $\mathcal{O}Jt(t-1) \subseteq I \subseteq \mathcal{O}J$ and $\mathcal{O}/\mathcal{O}t(t-1)$-submodules of $\mathcal{O}J/\mathcal{O}Jt(t-1)$, it is of interest to study the Lie algebra $\mathcal{O}/\mathcal{O}t(t-1)$.

\begin{lem} The Lie algebra $B = \mathcal{O}/\mathcal{O}t(t-1) = \bigoplus^2_{i=0} (k\overline{v_it} \oplus k\overline{v_i(t-1)})$ is solvable, but not nilpotent.
\end{lem}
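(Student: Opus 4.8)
The algebra $B = \mathcal{O}/\mathcal{O}t(t-1)$ is six-dimensional with basis the classes $\overline{v_it}$, $\overline{v_i(t-1)}$ for $i = 0,1,2$, and from the computations in the proof of Proposition \ref{Prop5.8Ideals} (taking $J = k[t]$, so $q(t) = 1$ and $w_i = v_i$) we already have an essentially complete multiplication table: the only nonzero brackets among basis elements are, up to sign and the relation $t^2 \equiv t$, $t(t-1) \equiv 0$, $(t-1)^2 \equiv -(t-1)$ in $k[t]/(t(t-1))$,
\begin{align*}
[\overline{v_2t},\overline{v_0t}] &= \overline{v_1t}, & [\overline{v_2t},\overline{v_1t}] &= \overline{v_0t}, & [\overline{v_1t},\overline{v_2t}] &= \overline{-v_0t}, & [\overline{v_0t},\overline{v_2t}] &= \overline{-v_1t},
\end{align*}
together with the analogous three relations among the $\overline{v_i(t-1)}$, and all brackets between a ``$t$''-vector and a ``$(t-1)$''-vector vanish (since $[\overline{v_it},\overline{v_j(t-1)}] \in \mathcal{O}t(t-1)$). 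So $B = B' \oplus B''$ as a Lie algebra, where $B' = \operatorname{Span}\{\overline{v_it} : i\}$ and $B'' = \operatorname{Span}\{\overline{v_i(t-1)} : i\}$ are ideals of $B$, each three-dimensional.

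First I would observe that $B'$ is isomorphic to $\mathfrak{sl}_2$: rescaling the basis appropriately (the relations above are exactly the structure constants of $\mathfrak{sl}_2$ in a suitable basis — indeed $[x,y]=z$-type relations with the cyclic sign pattern shown give a copy of $\mathfrak{sl}_2$ over a field of characteristic zero), so $B'$ is simple, hence neither solvable nor nilpotent. Wait — that would make $B$ \emph{not} solvable, contradicting the statement. I must recheck the structure constants: in $B'$ we have $[\overline{v_2t},\overline{v_0t}] = \overline{v_1t}$, $[\overline{v_2t},\overline{v_1t}] = \overline{v_0t}$, and $[\overline{v_0t},\overline{v_1t}] = 0$ (this last bracket is $[\overline{v_1t},\overline{w_0t}]=[\overline{v_0t},\overline{w_1t}]=0$ from case 1(a) of the computation, with $w=v$). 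So the pattern is \emph{not} the cyclic $\mathfrak{sl}_2$ one; rather $\overline{v_1t}$ and $\overline{v_0t}$ commute, and $\overline{v_2t}$ acts on the plane $k\overline{v_0t}\oplus k\overline{v_1t}$ by the matrix $\left(\begin{smallmatrix}0&1\\1&0\end{smallmatrix}\right)$. Thus $B'$ is the \emph{non-abelian} two-dimensional $\oplus$-nothing — more precisely $B'$ has abelian ideal $A' = k\overline{v_0t}\oplus k\overline{v_1t}$ of codimension one, so $[B',B'] \subseteq A'$ and $[B',[B',B']] \subseteq [B',A'] \subseteq A'$ but in fact $[B',[B',B']] = A' \neq 0$, while $[A',A']=0$ gives $(B')^{(2)} = [A',A'] = 0$. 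Hence $B'$ is solvable of derived length $2$. The same holds for $B''$. Therefore $B = B' \oplus B''$ is solvable, with $B^{(1)} = A' \oplus A''$ abelian and $B^{(2)} = 0$.

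For non-nilpotence: in $B'$ the lower central series is $B'^1 = [B',B'] = A'$ (since $[\overline{v_2t},\overline{v_0t}] = \overline{v_1t}$ and $[\overline{v_2t},\overline{v_1t}] = \overline{v_0t}$ span $A'$), and then $B'^2 = [B',A'] = A'$ again, because $\operatorname{ad}_{\overline{v_2t}}$ restricted to $A'$ is the invertible map $\left(\begin{smallmatrix}0&1\\1&0\end{smallmatrix}\right)$. So the lower central series stabilizes at the nonzero ideal $A'$ and never reaches $0$; hence $B'$, and a fortiori $B$, is not nilpotent. Equivalently, $\operatorname{ad}_{\overline{v_2t}}$ is not a nilpotent endomorphism of $B$ (it has eigenvalues $\pm 1$ on $A'$), which by Engel's theorem already rules out nilpotence of $B$.

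\textbf{Key steps, in order:} (1) extract the multiplication table of $B$ from the $q(t)=1$ specialization of the computations in Proposition \ref{Prop5.8Ideals}, and note $B = B'\oplus B''$ with $B', B''$ ideals; (2) identify $A' = k\overline{v_0t}\oplus k\overline{v_1t}$ as an abelian ideal of $B'$ with $[B',B']=A'$, conclude $B^{(2)}=0$, so $B$ is solvable; (3) compute that $\operatorname{ad}_{\overline{v_2t}}|_{A'}$ is invertible (matrix $\left(\begin{smallmatrix}0&1\\1&0\end{smallmatrix}\right)$), so the lower central series of $B$ stabilizes at $A'\oplus A''\neq 0$, hence $B$ is not nilpotent. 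The only real subtlety — and the step I'd be most careful with — is getting the structure constants and the reductions mod $t(t-1)$ exactly right (in particular which pairs commute and the precise action of the ``$v_2$''-type elements), since a sign or index slip there is what produced my false start above; everything after that is a one-line observation about a $6$-dimensional Lie algebra.
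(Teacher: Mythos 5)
Your proof is correct and rests on the same multiplication table and the same computations as the paper's: both establish $B^{(1)}=k\overline{v_0t}\oplus k\overline{v_1t}\oplus k\overline{v_0(t-1)}\oplus k\overline{v_2(t-1)}$, that this is abelian (so $B^{(2)}=0$), and that the lower central series stabilizes at $B^{(1)}\neq 0$. Your packaging via the ideal decomposition $B=B'\oplus B''$ and the observation that $\ad_{\overline{v_2t}}$ acts invertibly (eigenvalues $\pm1$) on the abelian part is a slightly cleaner way to phrase the non-nilpotence than the paper's direct computation of $B^2=[B,B^1]$, but it is not a genuinely different argument; just note that in $B''$ the distinguished acting element is $\overline{v_1(t-1)}$ (not $\overline{v_2(t-1)}$), so the relations there are analogous only up to a permutation of indices.
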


\begin{proof} Consider the following cases for $i,j \in \{0,1,2\}$:
\begin{enumerate}
        \item $[\overline{v_it}, \overline{v_j(t-1)}] = 0$, since $[v_it, v_j(t-1)] = t(t-1)[v_i,v_j] \in \mathcal{O}t(t-1)$, 
        \item $[\overline{v_it}, \overline{v_jt}]$ where $i \neq j$,
            \begin{enumerate}
                \item $[\overline{v_1t}, \overline{v_0t}] = [\overline{v_0t}, \overline{v_1t}] = 0$,
                \item $[\overline{v_2t}, \overline{v_0t}] = \overline{v_1t^3} = \overline{v_1t}$,
   			        \item $[\overline{v_2t}, \overline{v_1t}] = \overline{v_0t^2} = \overline{v_0t}$,
            \end{enumerate}
        \item $[\overline{v_i(t-1)}, \overline{v_j(t-1)}]$ where $i \neq j$,
            \begin{enumerate}
            		\item $[\overline{v_0(t-1)}, \overline{v_2(t-1)}] = [\overline{v_2(t-1)}, \overline{v_0(t-1)}] = 0$,
                \item $[\overline{v_1(t-1)}, \overline{v_2(t-1)}] = -\overline{v_0(t-1)^2} = \overline{v_0(t-1)}$,
                \item $[\overline{v_1(t-1)}, \overline{v_0(t-1)}] = \overline{v_2(t-1)^3} = \overline{v_2(t-1)}$.
            \end{enumerate}
    \end{enumerate}
Hence, $B^{(1)} = [B,B] = k\overline{v_1t} \oplus k\overline{v_0t} \oplus k\overline{v_0(t-1)} \oplus k\overline{v_2(t-1)}$. Then, since
\begin{align*}
    [\overline{v_1t},\overline{v_0t}] &= [\overline{v_0(t-1)},\overline{v_2(t-1)}] = 0, \\
    [\overline{v_1t},\overline{v_0t}] &= \overline{v_2t^2(t-1)} = 0 = [\overline{v_0t},\overline{v_1t}], \\
    [\overline{v_0(t-1)},\overline{v_2(t-1)}] &= -\overline{v_1t(t-1)^2} = 0 = [\overline{v_2(t-1)},\overline{v_0(t-1)}],
\end{align*}
it follows that $B^{(2)} = 0$ and hence $B$ is solvable.

From above we have that $B^1 = B^{(1)} = [B,B] = k\overline{v_1t} \oplus k\overline{v_0t} \oplus k\overline{v_0(t-1)} \oplus k\overline{v_2(t-1)}$. Next, we want to determine $B^2 = [B,B^1]$. So we consider the following:
\begin{align*}
    [\overline{v_0t}, \overline{v_1t} + \overline{v_0t} + \overline{v_0(t-1)} + \overline{v_2(t-1)}] &= -\overline{v_2t^2(t-1)} - \overline{v_1t^2(t-1)} \\ &= 0, \\
    [\overline{v_0(t-1)}, \overline{v_1t} + \overline{v_0t} + \overline{v_0(t-1)} + \overline{v_2(t-1)}] &= -\overline{v_1t(t-1)^2} = 0, \\
    [\overline{v_1t}, \overline{v_1t} + \overline{v_0t} + \overline{v_0(t-1)} + \overline{v_2(t-1)}] &= \overline{v_2t^2(t-1)} = 0, \\
    [\overline{v_1(t-1)}, \overline{v_1t} + \overline{v_0t} + \overline{v_0(t-1)} + \overline{v_2(t-1)}] &= \overline{v_2(t-1)^3} - \overline{v_0(t-1)^2} \\ &= \overline{v_2(t-1)} + \overline{v_0(t-1)}, \\
    [\overline{v_2t},\overline{v_1t} + \overline{v_0t} + \overline{v_0(t-1)} + \overline{v_2(t-1)}] &= \overline{v_0t^2} + \overline{v_1t^3}\\ &= \overline{v_0t} + \overline{v_1t}, \\
    [\overline{v_2(t-1)}, \overline{v_1t} + \overline{v_0t} + \overline{v_0(t-1)} + \overline{v_2(t-1)}] &= \overline{v_1t(t-1)^2} = 0.
\end{align*}
It then follows that $B^2 = k\overline{v_0t} \oplus k\overline{v_0(t-1)} \oplus k\overline{v_1t} \oplus k\overline{v_2(t-1)} = B^1$. Hence $B$ is not nilpotent.
\end{proof}

\begin{prop} \emph{(\cite[Proposition 5.10]{ElduqueA07})} Let $I$ be a closed ideal of $\mathcal{O}$, with $0 \neq J = J_I = q(t)k[t]$. Then, with $w_i = v_ik[t]$, $i = 0,1,2$, $I$ is one of the following ideals:
\begin{enumerate}
    \item[(i)] $I = \mathcal{O}Jt(t-1) \oplus k(w_0t \pm w_1t) \oplus k(w_0(t-1) \pm w_2(t-1)).$
    \item[(ii)] $I = \mathcal{O}Jt(t-1) \oplus (\bigoplus^2_{i=0} kw_it) \oplus k(w_0(t-1) \pm w_2(t-1))$. \newline In this case, $I = \mathcal{O}Jt \oplus k(w_0 \pm w_2)$.
    \item[(iii)] $I = \mathcal{O}Jt(t-1) \oplus k(w_0t \pm w_1t) \oplus (\bigoplus^2_{i=0} kw_i(t-1))$. \newline In this case, $I = \mathcal{O}J(t-1) \oplus k(w_0 \pm w_1)$.
    \item[(iv)] $I = \mathcal{O}J$.
\end{enumerate}
\end{prop}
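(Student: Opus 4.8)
The plan is to combine the preceding proposition, which for a fixed non-zero ideal $J=q(t)k[t]$ of $k[t]$ lists \emph{all} ideals $I$ of $\mathcal{O}$ with $J_I=J$, with the closedness criterion ``$I$ closed $\iff Z(I)=I$'' (equivalently, $\mathcal{O}/I$ has trivial center). By Proposition~\ref{Prop5.8Ideals} every such $I$ has the shape $I=\mathcal{O}Jt(t-1)\oplus S$ where either $S$ is a ``pure'' sum $M_t\oplus M_{t-1}$ of an $\mathcal{O}$-submodule $M_t$ of $\bigoplus_i k\,w_it$ and one $M_{t-1}$ of $\bigoplus_i k\,w_i(t-1)$ meeting the constraints of part~(i), or $S=S_\eta$ is one of the ``mixed'' subspaces of part~(ii) with parameter $0\neq\eta\in k$. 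These form finitely many families plus one $k^\times$-parametrized family, so it suffices to decide closedness case by case and check that exactly the nine ideals displayed in (i)--(iv) come out closed.

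First I would record two structural reductions. One: $Z(I)$ is always an ideal containing $I$, and for an $I$ as above one also has $Z(I)\subseteq\mathcal{O}J$; writing $x=v_0p_0+v_1p_1+v_2p_2\in Z(I)$, the iterated brackets $[[x,v_2],v_2]=t(v_0p_0+v_1p_1)$ and $[[x,v_1],v_1]$ force $tp_0,\ (t-1)p_0\in J_I=J$, hence $p_0\in J$, while $[x,v_1],[x,v_2]$ force $p_1,p_2\in J$. Thus $J_{Z(I)}=J$, so $Z(I)$ is again one of the ideals of Proposition~\ref{Prop5.8Ideals}, and the entire discussion takes place inside the six-dimensional quotient $\mathcal{O}J/\mathcal{O}Jt(t-1)=\bigoplus_{i=0}^2(k\overline{w_it}\oplus k\overline{w_i(t-1)})$. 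Two: since the bracket of $\mathcal{O}$ is $k[t]$-bilinear and multiplication by $t$ (resp.\ $t-1$) on this quotient is $\pm$ the projection onto the ``$t$-part'' (resp.\ the ``$(t-1)$-part''), the requirement $x\in Z(I)$ reduces to the six conditions $[\bar x,\overline{v_jt}],\,[\bar x,\overline{v_j(t-1)}]\in\overline S$ $(j=0,1,2)$, each of which is read straight off the bracket table in the proof of Proposition~\ref{Prop5.8Ideals}. Here I would use that $\ad_{\overline{v_2t}}$ and $\ad_{\overline{v_1(t-1)}}$ commute and split the six-dimensional module into the joint eigenlines $k(\overline{w_0t\pm w_1t})$, $k(\overline{w_0(t-1)\pm w_2(t-1)})$ and the two-dimensional $0$-eigenspace $k\overline{w_2t}\oplus k\overline{w_1(t-1)}$, where $\overline{w_2t}$ generates all of $\bigoplus_i k\overline{w_it}$ and $\overline{w_1(t-1)}$ generates all of $\bigoplus_i k\overline{w_i(t-1)}$.

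For a family not appearing in (i)--(iv) I would exhibit a witness $\bar x\in Z(I)$ with $\bar x\notin\overline S$. If the ``$t$-part'' $M_t$ equals $k\overline{w_0t}\oplus k\overline{w_1t}$ (the case $\epsilon=\delta=1,\ \gamma=0$ of part~(i)), the table gives $[\overline{w_2t},\overline{v_jt}]\in k\overline{w_0t}\oplus k\overline{w_1t}\subseteq\overline S$ and $[\overline{w_2t},\overline{v_j(t-1)}]=0$, so $w_2t\in Z(I)$ although $w_2t\notin S$; symmetrically $w_1(t-1)\in Z(I)$ but $\notin S$ when $M_{t-1}=k\overline{w_0(t-1)}\oplus k\overline{w_2(t-1)}$. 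The \emph{same} computation settles the whole mixed family at once: for every $\eta\neq0$ one has $[\overline{w_2t},\mathcal{O}]\subseteq\overline{S_\eta}$ while $\overline{w_2t}\notin\overline{S_\eta}$ (precisely because $\eta\neq0$), so $\mathcal{O}Jt(t-1)\oplus S_\eta$ is never closed. Conversely, for each of the nine candidates in which $M_t$ is either a single line $k(\overline{w_0t\pm w_1t})$ or the full $\bigoplus_i k\overline{w_it}$, and likewise for $M_{t-1}$, I would verify $Z(I)=I$ by decomposing an arbitrary $\bar x\in Z(I)$ into joint eigencomponents: $\ad_{\overline{v_2t}}$ and $\ad_{\overline{v_1(t-1)}}$ kill the ``wrong-sign'' eigenlines, and $\ad_{\overline{v_0t}}$ and $\ad_{\overline{v_0(t-1)}}$ kill the coefficients of $\overline{w_2t}$ and $\overline{w_1(t-1)}$ unless those already lie in $\overline S$ (which happens exactly when $M_t$, resp.\ $M_{t-1}$, is the full piece). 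Finally I would rewrite the surviving ideals in the displayed forms, using $\mathcal{O}Jt=\mathcal{O}Jt(t-1)\oplus\bigoplus_i k\,w_it$ and $\overline{w_0\pm w_2}\equiv-(\overline{w_0(t-1)\pm w_2(t-1)})\pmod{\bigoplus_i k\,w_it}$ to obtain the alternate descriptions $I=\mathcal{O}Jt\oplus k(w_0\pm w_2)$ in (ii) and $I=\mathcal{O}J(t-1)\oplus k(w_0\pm w_1)$ in (iii).

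The step I expect to be the real work is organizational rather than conceptual: turning ``$[x,\mathcal{O}]\subseteq I$ for all $x$'' into the finite eigenspace test above, and then running that test through all nine surviving configurations and every discarded one without slips — in particular keeping track of the fact that $t$ and $t-1$ need not be invertible in $k[t]$, which is exactly why $w_0t\pm w_1t$ and $w_0(t-1)\pm w_2(t-1)$ (and not the individual $w_i$) are the indivisible building blocks, and why $S_\eta$ fails to be closed for \emph{every} $\eta$. The one genuinely indispensable preliminary observation, without which the reduction to Proposition~\ref{Prop5.8Ideals} would not go through, is the inclusion $Z(I)\subseteq\mathcal{O}J$ established in the first reduction.
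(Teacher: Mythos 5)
Your proposal is correct and follows essentially the same route as the paper: the decisive computation $[w_2t,\mathcal{O}]\subseteq\mathcal{O}Jt(t-1)+kw_0t+kw_1t$ producing $w_2t\in Z(I)\setminus I$ (and its mirror with $w_1(t-1)$) to discard the $S_\eta$ family and the $\epsilon=\delta=1,\gamma=0$ (resp.\ primed) configurations, the bracket computations $[x,v_j]$ forcing $p_0,p_1,p_2\in J$ and hence $Z(I)\subseteq\mathcal{O}J$, and the identities $\mathcal{O}Jt=\mathcal{O}Jt(t-1)\oplus\bigoplus_i kw_it$ for the alternate descriptions in (ii) and (iii). Your packaging of the closedness check via the joint eigenspaces of $\ad_{\overline{v_2t}}$ and $\ad_{\overline{v_1(t-1)}}$, and the observation that $J_{Z(I)}=J$ so that $Z(I)$ is itself one of the ideals already classified, are only organizational variants of what the paper does with direct divisibility arguments.
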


\begin{proof} To prove that the closed ideals of $\mathcal{O}$ are those stated above, it is a matter of sifting through all of the ideals with $0 \neq J = J_I = q(t)k[t]$ as described in Proposition \ref{Prop5.8Ideals}.

First, recall that $\mathcal{O} = \mathcal{O}t(t-1) \oplus \Span\{v_it,v_i(t-1) | i = 0,1,2\}$. Hence, if $J = q(t)k[t]$ and $w_i = v_ik[t]$, $i = 0,1,2$, then
\begin{align*}
    [w_2t,\mathcal{O}] &= [w_2t,\mathcal{O}t(t-1)] + k[w_2t,v_0t] + k[w_2t,v_1t] + k[w_2t,v_0(t-1)] \\
                                         & \qquad \qquad \qquad \qquad + k[w_2t,v_1(t-1)] \\
                                         &= [w_2t,\mathcal{O}t(t-1)] + kw_1t^3 + kw_0t^2 + kw_1t^2(t-1) + kw_0t(t-1) \\
                                         &\subseteq \mathcal{O}Jt(t-1) + kw_1t + kw_0t
\end{align*}
In the case of Proposition \ref{Prop5.8Ideals}(ii), $[w_2t,\mathcal{O}] \subseteq I$. Hence $w_2t \in Z(I)\backslash I$ and so $I$, in this case, is not a closed ideal. This is also the case for $I$ as in Proposition \ref{Prop5.8Ideals}(i) with ($\epsilon = \delta = 1$ and $\gamma = 0$) or ($\epsilon' = \delta' = 1$ and $\gamma' = 0$).

After eliminating these cases from Proposition \ref{Prop5.8Ideals}, we are left with the four cases specified in the statement of the proposition above. In order to confirm that these are in fact closed, consider $x = v_0p_0(t) + v_1p_1(t) + v_2p_2(t) \in Z(I)$ and its action on $v_0$, $v_1$ and $v_2$:
\begin{align}
    [x, v_0] &= v_2p_1(t)(t-1) + v_1p_2(t)t \in I \subseteq \mathcal{O}J \label{xv0} \\
    [x, v_1] &= -v_2p_0(t)(t-1) + v_0p_2(t)t \in I \subseteq \mathcal{O}J \label{xv1} \\
    [x, v_2] &= -v_1p_0(t)(t-1) - v_0p_1(t)t \in I \subseteq \mathcal{O}J \label{xv2}
\end{align}
It follows from (\ref{xv1}) and (\ref{xv2}) that $p_1(t), p_2(t)\in J$ and $(t-1)p_0(t), tp_0(t) \in J$. Furthermore,
\begin{align*}
    p_0(t) = tp_0(t) - (t-1)p_0(t) \in J.
\end{align*}
Therefore, $x \in Z(I)$ and hence $Z(I) \subseteq \mathcal{O}J$. This confirms that $I = \mathcal{O}J$ is a closed ideal of $\mathcal{O}$, proving (iv).

Next, consider cases (i)-(iii). From (\ref{xv1}) and (\ref{xv2}) it follows that
\begin{center}
    $t \mid p_2(t)$ or $(t-1) \mid p_2(t)\qquad$ and  $\qquad t \mid p_1(t)$ or $(t-1) \mid p_1(t)$.
\end{center}
Furthermore, from
\begin{align*}
    [[x,v_1],v_0] = -v_1p_0(t)t(t-1) \in I, \\
    [[x,v_2],v_0] = -v_2p_0(t)t(t-1) \in I,
\end{align*}
it follows that $v_1p_0(t), v_2p_0(t) \in \mathcal{O}J$ and hence $v_0p_0(t) = [v_2p_0(t),v_1] \in \mathcal{O}J$. By recalling that
\begin{align*}
\mathcal{O}J = \mathcal{O}Jt(t-1) \oplus \Span\{v_iq(t)t,v_iq(t)(t-1) | i = 0,1,2\}.
\end{align*}
we can conclude that $x \in I$ and so $Z(I) = I$, i.e., $I$ as defined in (i)-(iii) are closed ideals.

It remains to check the last statements in (ii) and (iii). Since $Jt = Jt(t-1) \oplus kq(t)t$, it follows that $\mathcal{O}Jt = \mathcal{O}Jt(t-1) \oplus (\bigoplus^2_{i=0} kw_it)$. Furthermore, $w_i(t-1) + w_i = w_it$ and $w_it \in I \cap (\mathcal{O}Jt \oplus k(w_0 \pm w_2))$. It follows directly that $I = \mathcal{O}Jt \oplus k(w_0 \pm w_2)$, hence proving statement (ii). Similarly, using the fact that $J(t-1) = Jt(t-1) \oplus kq(t)(t-1)$, one can obtain the result stated in (iii).
\end{proof}

\cleardoublepage

\addcontentsline{toc}{chapter}{Bibliography}
\bibliographystyle{alpha}
\bibliography{reflist}

\newpage

\end{document}